\theoremstyle{plain}
\newtheorem{theorem}{Theorem}
\newtheorem{lemma}{Lemma}[section]
\newtheorem{corollaryP}[lemma]{Corollary}
\newtheorem{propositionP}[lemma]{Proposition}
\newtheorem{proposition}{Proposition}
\newtheorem{corollary}[theorem]{Corollary}
\theoremstyle{definition}
\newtheorem{definition}{Definition}
\newtheorem*{condition}{Condition}
\theoremstyle{remark}
\newtheorem{remark}{Remark}
\def\namedlabel#1#2{\begingroup
   #2%
 \def\@currentlabel{#2}%
   \phantomsection\label{#1}\endgroup
}
\def\S{Section}
\def\ie{{\em i.e.,} }
\newfont\bbf{msbm10 at 12pt}
\def\eps{\varepsilon}
\def\R{{\mathbb R}}
\def\N{{\mathbb N}}
\def\Z{{\mathbb Z}}
\def\B{{\mathcal B}}
\def\I{{\mathcal I}}
\def\P{{\mathcal P}}
\def\D{{\mathcal D}}
\def\sm{\setminus}
\def\cv{\ensuremath{\text {Cor}}}
\newcommand{\dif}{\mathrm{d}}
\DeclareMathOperator*{\esssup}{ess\;sup}
\DeclareMathOperator*{\essinf}{ess\;inf}
\def\R{\ensuremath{\mathbb R}}
\def\N{\ensuremath{\mathbb N}}
\def\Z{\ensuremath{\mathbb Z}}
\def\I{\ensuremath{{\bf 1}}}
\def\e{\ensuremath{\text e}}
\def\S{\ensuremath{\mathcal S}}
\def\RR{\ensuremath{\mathcal R}}
\def\B{\ensuremath{\mathcal B}}
\def\l{\ensuremath{\text{Leb}}}
\def\BC{\ensuremath{\mathcal BC}}
\def\M{\ensuremath{\mathcal M}}
\def\P{\ensuremath{\mathcal P}}
\def\p{\ensuremath{\mathbb P}}
\def\QQ{\ensuremath{\mathcal Q}}
\def\HH{\ensuremath{\mathscr H}}
\def\nn{\ensuremath{\mathscr N}}
\def\n{\ensuremath{n}}
\def\X{\mathcal{X}}
\def\ie{{\em i.e.}, }
\def\E{\mathbb E}
\numberwithin{equation}{section}
\def\dist{\ensuremath{\mbox{dist}}}
\def\tag#1{\hfill \qquad  #1}
\def\dist{\mbox{dist}}
\def\le{\leqslant}
\def\ge{\geqslant}
\newcommand{\st}{such that }
\def\Law{{\hat H}}
\begin{document}

\title[The compound Poisson distribution in non-uniformly hyperbolic systems]{The compound Poisson limit ruling periodic extreme behaviour of non-uniformly hyperbolic dynamics}

\author[A. C. M. Freitas]{Ana Cristina Moreira Freitas}
\address{Ana Cristina Moreira Freitas\\ Centro de Matem\'{a}tica \&
Faculdade de Economia da Universidade do Porto\\ Rua Dr. Roberto Frias \\
4200-464 Porto\\ Portugal} \email{amoreira@fep.up.pt}
\urladdr{http://www.fep.up.pt/docentes/amoreira}

\author[J. M. Freitas]{Jorge Milhazes Freitas}
\address{Jorge Milhazes Freitas\\ Centro de Matem\'{a}tica \& Faculdade de Ci\^encias da Universidade do Porto\\ Rua do
Campo Alegre 687\\ 4169-007 Porto\\ Portugal}
\email{jmfreita@fc.up.pt}
\urladdr{http://www.fc.up.pt/pessoas/jmfreita}

\author[M. Todd]{Mike Todd}
\address{Mike Todd\\ Mathematical Institute\\
University of St Andrews\\
North Haugh\\
St Andrews\\
KY16 9SS\\
Scotland} \email{mjt20@st-andrews.ac.uk }
\urladdr{http://www.mcs.st-and.ac.uk/~miket/}

\begin{abstract}
We prove that the distributional limit of the normalised number of returns to small neighbourhoods of periodic points of certain non-uniformly hyperbolic dynamical systems is compound Poisson.
The returns to small balls around a fixed point in the phase space correspond to the occurrence of rare events, or exceedances of high thresholds, so that there is a connection between the laws of Return Times Statistics and Extreme Value Laws. The fact that the fixed point in the phase space is a repelling periodic point implies that there is a tendency for the exceedances to appear in clusters whose average sizes is given by the Extremal Index, which depends on the expansion of the system at the periodic point. 

We recall that for generic points, the exceedances, in the limit, are singular and occur at Poisson times. However, around periodic points, the picture is different: the respective point processes of exceedances converge to a compound Poisson process, so instead of single exceedances, we have entire clusters of exceedances occurring at Poisson times with a geometric distribution ruling  its multiplicity.

The systems to which our results apply include: general piecewise expanding maps of the interval (Rychlik maps), maps with indifferent fixed points (Manneville-Pomeau maps) and Benedicks-Carleson quadratic maps. 

\end{abstract}

\thanks{ACMF was partially supported by FCT grant SFRH/BPD/66174/2009. JMF was partially supported by FCT grant SFRH/BPD/66040/2009. MT was partially supported by NSF grants DMS 0606343 and DMS 0908093. All three authors are supported by FCT (Portugal) projects PTDC/MAT/099493/2008 and PTDC/MAT/120346/2010, which are financed by national and European Community structural funds through the programs  FEDER and COMPETE . All three authors were also supported by CMUP, which is financed by FCT (Portugal) through the programs POCTI and POSI, with national
and European Community structural funds.}

\date{\today}

\maketitle

\section{Introduction}\label{sec:introduction}

The study of extreme events is useful for risk assessment and advanced planning. In many situations, like weather forecasting and the Lorenz equations, natural phenomena can be modelled by dynamical systems. Hence, the study of extreme value laws for data arising from such chaotic dynamical systems has recently received attention from dynamicists interested in understanding better the statistical behaviour of the systems \cite{C01, FF08, FF08a, FFT10, FFT11, GHN11,HNT12}, and also from physicists and meteorologists for whom the estimation of extreme behaviour is of crucial importance \cite{VHF09, HVR12,FLT11,FLT11a,FLT11b,LFW12}.  

We are particularly interested in the convergence of point processes counting the occurrence of extreme events for systems revealing periodic behaviour. This periodicity is responsible for the appearance of clusters of extreme observations (exceedances of high thresholds), which leads to a compound Poisson process, in the limit. The latter can be thought of as having two components: one is the underlying asymptotic Poisson process governing the positions of the clusters of exceedances; and the other is the multiplicity distribution associated to each such Poisson event, which is determined by the average cluster size.  

There are two main approaches to the study of laws of rare events for dynamical systems. One is the study of Extreme Value Laws (EVL) which in the dynamical systems realm is quite recent. The other is the study of Hitting Times Statistics (HTS) or Return Times Statistics (RTS), \ie the limit laws for the normalised waiting times before hitting/returning to  asymptotically small sets, which goes back to \cite{GS90, P91,H93,HSV99}, for example.
In \cite{FFT10}, we showed, under general conditions, the equivalence between these two approaches so that exceedances correspond to hits to small sets.

Almost all papers on the subject deal with hitting/return times to small sets around generic points. The exceptions are \cite{H93} by Hirata, which establishes the distribution of the first return time to small sets around periodic points, for Axiom A systems, and the deep paper \cite{HV09} by Haydn and Vaienti, where the convergence of the normalised number of returns to small sets around periodic points to the compound Poisson distribution was proved for $\phi$ mixing systems. In both papers, the small sets around periodic points considered are dynamically defined cylinders sets. Very recently, in \cite{FFT12}, using an EVL approach, we managed to study the first return to small sets around periodic points for piecewise expanding dynamical systems, but this time the role of the small sets was played by topological balls instead of cylinders, which makes it a stronger result. However, we emphasise that this was only done for the distribution of the first hitting/return time. We also note the work of Chazottes, Coelho and Collet \cite{CCC09}, where the compound Poisson limit  was obtained for the successive closer and closer approximations to subsystems  of finite type (which could be chosen to emulate periodic points) in symbolic dynamics, as well as the work of Ferguson and Pollicott \cite{FP12} which, among other results, improved on \cite{H93}.

In this paper we consider much more general systems. In fact, we prove that for an important and well-studied class of  non-uniformly hyperbolic interval maps the relevant probabilistic law for the normalised number of returns to small neighbourhoods of periodic points is a compound Poisson distribution.  This essentially breaks down into three parts: 

\begin{list}{$\bullet$}
{ \itemsep 0.5mm \topsep 0.0mm \leftmargin=6mm}
\item We give general conditions on the returns which will guarantee a compound Poisson process.  This is expressed in the language of random variables for some process.  We then prove that these conditions  are met by the process consisting of return time statistics to asymptotically small balls around periodic points for uniformly hyperbolic interval maps.

\item We use the approach given in \cite{BSTV03}, but for periodic points.  This essentially says that the Poisson statistics of returns to a periodic point for a first return map are the same as those for the original map.  We then apply these results to maps such as Manneville-Pomeau, which itself is non-uniformly hyperbolic, but which has a uniformly hyperbolic first return map. 

\item We prove the same result, but now for a large set of quadratic maps of the interval, whose first return map is not uniformly hyperbolic.  This requires two tools: the Hofbauer extension, which gives uniformly hyperbolic induced maps, and was employed in this context (but not to periodic points) in \cite{BV03, BT09}; and a Benedicks-Carleson \cite{BC85} type of parameter exclusion argument, which is required here to ensure that the density of our measure doesn't blow up at the periodic point we are concerned with.

\end{list}

\textbf{Notation:} For quantities $(a_r)_r$ and $(b_r)_r$, we write $a_r\asymp b_r$ if there exists $C>0$ such that $\frac1c\le \frac{a_r}{b_r}\le C$ for  all $r$ close enough to its limit (this depends on the context - either the limit is 0 or $\infty$).  Similarly we write $a_r\sim b_r$ if $\lim_r\frac{a_r}{b_r}=1$.

\subsection{Rare events point process, extremes and hitting times}
\label{subsec:point-processes}
The starting point is a stationary stochastic process $X_0, X_1, \ldots$ Even though our results regarding the limit of point processes generated by such stochastic processes apply beyond the dynamical systems realm, since our main application is to dynamical systems, and since it is clearer to present our results in that context, we restrict our discussion to that setting.

Hence, take a system  $(\X,\mathcal
B,\p,f)$, where $\X$ is a Riemannian manifold, $\mathcal B$ is the Borel $\sigma$-algebra, $f:\X\to\X$ is a measurable map
and $\p$ an $f$-invariant probability measure.  

Suppose that the time series $X_0, X_1,\ldots$ arises from such a system simply by evaluating a given  observable $\varphi:\X\to\R\cup\{\pm\infty\}$ along the orbits of the system, or in other words, the time evolution given by successive iterations by $f$:
\begin{equation}
\label{eq:def-stat-stoch-proc-DS} X_n=\varphi\circ f^n,\quad \mbox{for
each } n\in {\mathbb N}.
\end{equation}
Clearly, $X_0, X_1,\ldots$ defined in this way is not an independent sequence.  However, $f$-invariance of $\p$ guarantees that this stochastic process is stationary.

We suppose that the r.v. $\varphi:\X\to\R\cup\{\pm\infty\}$
achieves a global maximum at $\zeta\in \X$ (we allow
$\varphi(\zeta)=+\infty$). 
 We also assume throughout that $\zeta\in \X$ is a repelling periodic point, of prime period\footnote{i.e.,  the \emph{smallest} $n\in \N$ such that $f^n(\zeta)=\zeta$.  Clearly $f^{ip}(\zeta)=\zeta$ for any $i\in \N$. }  $p\in\N$.  So when $\varphi$ and $\p$ are sufficiently regular:

\begin{enumerate}

\item[\namedlabel{item:U-ball}{(R1)}] 
for $u$ sufficiently close to $u_F:=\varphi(\zeta)$,  the event 
\begin{equation*}
\label{def:U}
U(u):=\{x\in\X:\; \varphi(x)>u\}=\{X_0>u\}
\end{equation*} corresponds to a topological ball centred at $\zeta$. Moreover, the quantity $\p(U(u))$, as a function of $u$, varies continuously on a neighbourhood of $u_F$.

The periodicity of $\zeta$ implies that for all large $u$, $\{X_0>u\}\cap f^{-p}(\{X_0>u\})\neq\emptyset$ and the fact that the prime period is $p$ implies that $\{X_0>u\}\cap f^{-j}(\{X_0>u\})=\emptyset$ for all $j=1,\ldots,p-1$. 

\item[\namedlabel{item:repeller}{(R2)}]
 the fact that $\zeta$ is repelling means that we have backward contraction implying that there exists $0<\theta<1$ so that $\bigcap_{j=0}^i f^{-jp}(X_0>u)$ is another ball of smaller radius around $\zeta$ with $$\p\left(\bigcap_{j=0}^i f^{-jp}(X_0>u)\right)\sim(1-\theta)^i\p(X_0>u),$$ for all $u$ sufficiently large. 
 
\end{enumerate}

We are interested in studying the extremal behaviour of the stochastic process $X_0, X_1,\ldots$ which is tied with the occurrence of exceedances of high levels $u$. The occurrence of an exceedance at time $j\in\N_0$ means that the event $\{X_j>u\}$ occurs, where $u$ is close to $u_F$. Observe that a realisation of the stochastic process $X_0, X_1,\ldots$ is achieved if we pick, at random and according to the measure $\p$, a point $x\in\X$, compute its orbit and evaluate $\varphi$ along it. Then saying that an exceedance occurs at time $j$ means that the orbit of the point $x$ hits the ball $U(u)$ at time $j$, \ie $f^j(x)\in U(u)$.

For every $A\subset\R$ we define 
\[
\nn_u(A):=\sum_{i\in A\cap\N_0}\I_{X_i>u}.
\]
In the particular case where $A=I=[a,b)$ we simply write 
$\nn_{u,a}^b:=\nn_u([a,b)).$ 

Observe that $\nn_{u,0}^n$ counts the number of exceedances amongst the first $n$ observations of the process $X_0,X_1,\ldots,X_n$ or, in other words, the number of entrances in $U(u)$ up to time $n$. For high levels of $u$, since $\p(U(u))$ has small measure, an entrance in $U(u)$ (or the occurrence of an exceedance) is considered to be a rare event. This counting of occurrences of rare events will allow us to define the so called point processes of rare events. 

One of the goals here is to study the limit of these point processes which, in particular, will give us the behaviour of the partial maxima of $X_0, X_1,\ldots$ and, equivalently, of the existence of Hitting Time Statistics. In fact, for each $n\in\N$, define the partial maximum
\begin{equation*}
M_n=\max\{X_0,\ldots,X_{n-1}\}.
\end{equation*}
and for a set $A\in\B$ define a new r.v., the \emph{first hitting time} to $A$ denoted by $r_A:\X\to\N\cup\{+\infty\}$ where
\begin{equation*}
r_A(x)=\min\left\{j\in\N\cup\{+\infty\}:\; f^j(x)\in A\right\}.
\end{equation*}
Notice that
\begin{equation}
\label{eq:rel-HTS-EVL}
\{\nn_{u,0}^n=0\}=\{M_n\leq u\}=\{r_{U(u)}>n\}
\end{equation}
If, for a normalising sequence of levels $u_n$ such that
\begin{equation}
\label{eq:def-un}
\lim_{n\to\infty}n\p(X_0>u_n)=\tau,
\end{equation}
for some $\tau\geq0$, there exists a non degenerate distribution function (d.f.) H such that
\begin{equation*}
\lim_{n\to\infty}\p(M_n\leq u_n)=\bar H(\tau),
\end{equation*}
where $\bar H(\tau):=1- H(\tau)$ then we say we have an \emph{Extreme Value Law} (EVL) for $M_n$.
If there exists a non degenerate (d.f.) G such that for all $t\ge 0$,
\begin{equation*}
\lim_{u\to u_F}\p\left(r_{U(u)}\leq \frac t{\p(U(u))}\right)=G(t),
\end{equation*}
then we say we have \emph{Hitting Time Statistics} (HTS) $G$ for balls. 
Similarly, we can restrict our observations to $U(u_n)$: if there exists a non degenerate (d.f.) $\tilde G$ such that  for all $t\ge 0$,
\begin{equation*}
\lim_{u\to u_F}\p\left(r_{U(u)}\leq \frac t{\p(U(u))} \ \middle\vert \ U(u)\right)=\tilde G(t),
\end{equation*} 
then we say we have \emph{Return Time Statistics} (RTS) $\tilde G$ for balls. 

The existence of exponential HTS ($G(t)=1-\e^{-t}$) is equivalent to the existence of exponential RTS ($\tilde G(t)=1-\e^{-t}$). In fact, according to the Main Theorem in \cite{HLV05}, a system has HTS $G$ if and only if it has RTS $\tilde G$ and
\begin{equation}
\label{eq:HTS-RTS}
G(t)=\int_0^t(1-\tilde G(s))\,ds.
\end{equation}

In \cite{FFT10}, we showed that the existence of an EVL for $M_n$ was equivalent to the existence of HTS for balls, with $H=G$, which can be guessed from the second equality in \eqref{eq:rel-HTS-EVL} and the fact that by \eqref{eq:def-un} we may write $n\sim\frac{\tau}{\p(U(u_n))}.$

The motivation for using a normalising sequence $u_n$ satisfying \eqref{eq:def-un} comes from the case when $X_0, X_1,\ldots$ are independent and identically distributed (i.i.d.). In this i.i.d. setting, it is clear that $\p(M_n\leq u)= (F(u))^n$, where $F$ is the d.f. of $X_0$, \ie $F(x):=\p(X_0\leq x)$. Hence, condition \eqref{eq:def-un} implies that
\[
\p(M_n\leq u_n)= (1-\p(X_0>u_n))^n\sim\left(1-\frac\tau n\right)^n\to\e^{-\tau},
\]
as $n\to\infty$. Moreover, the reciprocal is also true. Note that in this case $H(\tau)=1-\e^{-\tau}$ is the standard exponential d.f.

On the other hand, the normalising term in the definition of HTS is inspired by Kac's Theorem which states that the expected amount of time you have to wait before you return to $U(u)$ is exactly $\frac1{\p(U(u))}$. 

In order to define a point process that through \eqref{eq:rel-HTS-EVL} captures the essence of an EVL and HTS, we need to re-scale time using the factor $v:=1/\p(X>u)$ given by Kac's Theorem. However, before we give the definition, we need some formalism. Let $\S$ denote the semi-ring of subsets of  $\R_0^+$ whose elements
are intervals of the type $[a,b)$, for $a,b\in\R_0^+$. Let $\RR$
denote the ring generated by $\S$. Recall that for every $J\in\RR$
there are $k\in\N$ and $k$ intervals $I_1,\ldots,I_k\in\S$ such that
$J=\cup_{i=1}^k I_j$. In order to fix notation, let
$a_j,b_j\in\R_0^+$ be such that $I_j=[a_j,b_j)\in\S$. For
$I=[a,b)\in\S$ and $\alpha\in \R$, we denote $\alpha I:=[\alpha
a,\alpha b)$ and $I+\alpha:=[a+\alpha,b+\alpha)$. Similarly, for
$J\in\RR$ define $\alpha J:=\alpha I_1\cup\cdots\cup \alpha I_k$ and
$J+\alpha:=(I_1+\alpha)\cup\cdots\cup (I_k+\alpha)$.

\begin{definition}
We define the \emph{rare event point process} (REPP) by
counting the number of exceedances (or hits to $U(u_n)$) during the (re-scaled) time period $v_nJ\in\RR$, where $J\in\RR$. To be more precise, for every $J\in\RR$, set
\begin{equation}
\label{eq:def-REPP} N_n(J):=\nn_{u_n}(v_nJ)=\sum_{j\in v_nJ\cap\N_0}\I_{X_j>u_n}.
\end{equation}
\end{definition}

Our main result essentially states that, under certain conditions, the REPP just defined converges in distribution to a compound Poisson process $N$ with intensity $\theta$ and a geometric multiplicity d.f. For completeness, we define here what we mean by a compound Poisson process. (See \cite{K86} for more details).

\begin{definition}
\label{def:compound-poisson-process}
Let $T_1, T_2,\ldots$ be  an i.i.d. sequence of random variables with common exponential distribution of mean $1/\theta$. Let  $D_1, D_2, \ldots$ be another i.i.d. sequence of random variables, independent of the previous one, and with d.f. $\pi$. Given these sequences, for $J\in\RR$, set
$$
N(J)=\int \I_J\;d\left(\sum_{i=1}^\infty D_i \delta_{T_1+\ldots+T_i}\right),
$$ 
where $\delta_t$ denotes the Dirac measure at $t>0$.  Whenever we are in this setting, we say that $N$ is a compound Poisson process of intensity $\theta$ and multiplicity d.f. $\pi$.
\end{definition}
\begin{remark}
\label{rem:poisson-process}
In this paper, the multiplicity will always be integer valued which means that $\pi$ is completely defined by the values $\pi_k=\p(D_1=k)$, for every $k\in\N_0$. Note that, if $\pi_1=1$, then $N$ is the standard Poisson process and, for every $t>0$, the random variable $N([0,t))$ has a Poisson distribution of mean $\theta t$. 
\end{remark}

\begin{remark}
\label{rem:compound-poisson}
In fact, in all statements below, $\pi$ is actually a geometric distribution of parameter $\theta\in (0,1]$,  \ie $\pi_k=\theta(1-\theta)^k$, for every $k\in\N_0$. This means that, as in \cite{HV09}, here, the random variable $N([0,t))$ follows a P\'olya-Aeppli distribution, \emph{i.e.}:
$$
\p(N([0,t))=k)=\e^{-\theta t}\sum_{j=1}^k \theta^j(1-\theta)^{k-j}\frac{(\theta t)^j}{j!}\binom{k-1}{j-1},
$$
for all $k\in\N$ and $\p(N([0,t))=0)=\e^{-\theta t}$. 
\end{remark}

\subsection{Conditions for the convergence of REPP in the presence of clustering}

When the r.v.s in the process $X_0, X_1, \ldots$  are independent, the number of exceedances of the level $u_n$ up to time $n$ is Bernoulli distributed with mean $n\p(X_0>u_n)$. Moreover, condition \eqref{eq:def-un} implies that in the limit we get a Poisson distribution for the number of exceedances.

 In fact, even in the dependent case, if some mixing condition $D(u_n)$ holds and in addition an anti clustering condition $D'(u_n)$ also holds, both introduced by Leadbetter in \cite{L73}, one can show that the REPP converges to a standard Poisson process of intensity $1$ (see for example \cite{LR88}). Since the rates of mixing for dynamical systems are usually given by decay of correlations of observables in certain given classes of functions, it turns out that condition $D(u_n)$ is too strong to be checked for chaotic systems whose mixing rates are known only through decay of correlations. For that reason, motivated by Collet's work \cite{C01}, in \cite{FF08a} the authors suggested a condition $D_2(u_n)$ which together with $D'(u_n)$ was enough to prove the existence of an exponential EVL ($\bar H(\tau)=\e^{-\tau}$) for maxima around non-periodic points $\zeta$. Later on, in \cite{FFT10} the authors provided the so called condition $D_3(u_n)$ which together with $D'(u_n)$ was enough to prove convergence of the REPP to a standard Poisson process of intensity $1$ (see \cite[Theorem~5]{FFT10}).  Condition $D_3(u_n)$ is a slight strengthening of $D_2(u_n)$, but both are much weaker than the original $D(u_n)$, and it is easy to show that they follow easily from sufficiently fast decay of correlations (see \cite[Section~2]{FF08a} and \cite[Proofs of Corollary 6 and Theorem 6]{FFT10}). Thus we were able to prove convergence of the REPP for stochastic processes like \eqref{eq:def-stat-stoch-proc-DS} arising from many chaotic dynamical systems. 
 
In the results mentioned above, condition $D'(u_n)$ prevented the existence of clusters of exceedances, which implied for example that the EVL was a standard exponential $\bar H(\tau)=\e^{-\tau}$. However, when $D'(u_n)$ does not hold, clustering of exceedances is responsible for the appearance of a parameter $0<\theta<1$ in the EVL which now is written as $\bar H(\tau)=\e^{-\theta \tau}$. This parameter, $\theta$ is commonly named \emph{Extremal Index} (EI) and can be defined as follows: if for a sequence of levels $(u_n)_{n\in\N}$ satisfying \eqref{eq:def-un} we have that $\lim_{n\to\infty}\p(M_n\leq u_n)=\e^{-\theta \tau}$,  for some $0\leq\theta\leq 1$, then we say that we have an EI $\theta$. When $\theta=1$ we have no clustering and when $\theta<1$ we have clustering which is as strong as $\theta$ is closed to $0$. In fact, $1/\theta$ can be seen as the average cluster size.

In \cite{FFT12},  the authors established a connection between the existence of an EI less than 1 and periodic behaviour. To be more specific, the main result there states that for dynamically defined stochastic processes as in \eqref{eq:def-stat-stoch-proc-DS}, where $\zeta$ is a repelling periodic point, under some conditions on the dependence structure of the process, there is an EVL for $M_n$ with an EI $\theta$ given by the expansion rate at the repelling periodic point $\zeta$, which is $1/(1-\theta)$.  (Note that we wrote the backward contraction rate $(1-\theta)$ in \ref{item:repeller} so that the EI could be easily identified as being $\theta$.) Around periodic points the rapid recurrence creates clusters of exceedances (hits) which makes it easy to check that condition $D'(u_n)$ fails (see \cite[Section~2.1]{FFT12}). This was a serious obstacle since the theory developed up to \cite{FFT12} was based on Collet's important observation that $D'(u_n)$ could be used not only in the usual way as in Leadbetter's approach, but also to compensate the weakening of the original $D(u_n)$, which allowed the application to chaotic systems with sufficiently fast decay of correlations. To overcome this difficulty we considered the annulus 
\begin{equation*}
\label{eq:def-Qp}
Q_p(u):=U(u)\setminus f^{-p}(U(u))=\{X_0>u, \;X_p\leq u\}
\end{equation*} resulting from removing from $U(u)$ the points that were doomed to return after $p$ steps, which form the smaller ball $U(u)\cap f^{-p}(U(u))$. We named the occurrence of $Q_p(u)$ as an \emph{escape} since it corresponds to the realisations that escape the influence of the underlying periodic phenomena and exit the ball $U(u)$ after $p$ iterates. Then we made the crucial observation that  the limit law corresponding to no entrances up to time $n$ into the ball $U(u_n)$ was equal to the limit law corresponding to no entrances into the annulus $Q_p(u_n)$ up to time $n$ (see \cite[Proposition~1]{FFT12}). This meant that, roughly speaking, the role played by the balls $U(u)$ could be replaced by that of the annuli $Q_p(u)$, with the advantage that points in $Q_p(u)$ were no longer destined to return after just $p$ steps. 

Based in this last observation we proposed two conditions on the dependence structure of $X_0, X_1,\ldots$ that we named $D_p(u_n)$ and $D_p'(u_n)$, which imply the existence of an EVL with EI $\theta<1$ around periodic points. These two conditions can be described as being obtained from $D_2(u_n)$ and $D'(u_n)$ by replacing balls by annuli.     

Regarding the REPP, when $\zeta$ is a repelling periodic point, one might think that to study its limit it would be enough to strengthen $D_p(u_n)$ by replacing the role of exceedances in $D_3(u_n)$ by that of escapes and then mimic the argument in the proof of  \cite[Theorem~5]{FFT10}, which states the convergence of the REPP to a standard Poisson process when $\zeta$ is not periodic and $D'(u_n)$ holds. However, a critical step there is the use of a criterion of Kallenberg \cite[Theorem~4.7]{K86} which applies only to simple point processes, without multiple events, which is not the case here. In particular, this means that by mimicking the proof 
of  \cite[Theorem~5]{FFT10} we can only show that the point process corresponding to counting clusters (instead of exceedances) converges to the usual Poisson process with intensity $1$. Hence, to prove the convergence of the REPP to a compound Poisson process, which we will prove to be the case when $\zeta$ is a periodic repeller, we will compute the Laplace transform of the point process directly and study its limit. 

As usual to obtain the desired convergence we need to impose some conditions on the dependence structure of $X_0, X_1, \ldots$. The first condition,  which we will denote by $D_p(u_n)^*$, is a strengthening of $D_p(u_n)$. Since we cannot use the aforementioned Kallenberg's criterion, this strengthening is a bit stronger than adapting $D_3(u_n)$ in the same way we proceeded with $D_2(u_n)$ to obtain $D_p(u_n)$. However, as in the case of these three just mentioned mixing conditions, it can be easily checked for systems with sufficiently fast decay of correlations.
Before we state the new mixing condition $D_p(u_n)^*$, we need to introduce some notation.  This is illustrated in Figure~\ref{fig:notation}.  Note that the pictures are in two dimensions for expository purposes.  The applications presented in this paper are primarily one-dimensional, but higher dimensional examples are also considered, see also \cite[Section 6.2]{FFT10}.

We define the sequence $\left(U^{(\kappa)}(u)\right)_{\kappa\geq0}$ of nested balls centred at $\zeta$ given by: 
\begin{equation*}
\label{def:U-k}
U^{(0)}(u)=U(u)\quad\mbox{and}\quad U^{(\kappa)}(u)=f^{-p}(U^{(\kappa-1)}(u))\cap U(u)\quad\mbox{for all $\kappa\in\N$.}
\end{equation*}

For $i,\kappa,\ell,s\in\N\cup\{0\}$, we define the following events:
\begin{align*}
Q_{p,i}^\kappa(u)&:=f^{-i}\left(U^{(\kappa)}(u)-U^{(\kappa+1)}(u)\right)\\&
\hspace{1.5mm}=\{X_i>u, X_{i+p}>u,\ldots, X_{i+\kappa p}>u,X_{i+(\kappa+1)p}\leq u\},\\
\QQ_{p,s,\ell}^\kappa(u)&:=\bigcap_{i=s}^{s+\ell-1} \left(Q_{p,i}^\kappa(u)\right)^c \qquad
\HH_{p,s,\ell}^\kappa(u):=\bigcap_{i=s}^{s+\ell-1} f^{-i} \left(\big(U^{(\kappa)}(u)\big)^c\right)
\end{align*}

Observe that for each $\kappa$, the set $Q_{p,0}^\kappa(u)$ corresponds to an annulus centred at $\zeta$. Besides, 
\begin{equation}
\label{eq:U-decomposition}
U(u)=\bigcup_{\kappa=0}^\infty Q_{p,0}^\kappa(u),
\end{equation}
which means that the ball centred at $\zeta$ which corresponds to $\{X_0>u\}$ can be decomposed into a sequence of disjoint annuli where $Q_{p,0}^0(u)$ is the most outward ring and the inner ring $Q_{p,0}^{\kappa+1}(u)$ is sent outward by $f^p$ to the ring $Q_{p,0}^\kappa(u)$, i.e., 
\begin{equation}
\label{eq:ring-dynamics}
f^{p}(Q_{p,0}^{\kappa+1}(u))=Q_{p,0}^\kappa(u).
\end{equation} 

\begin{figure}[h]
  \includegraphics[scale=0.8]{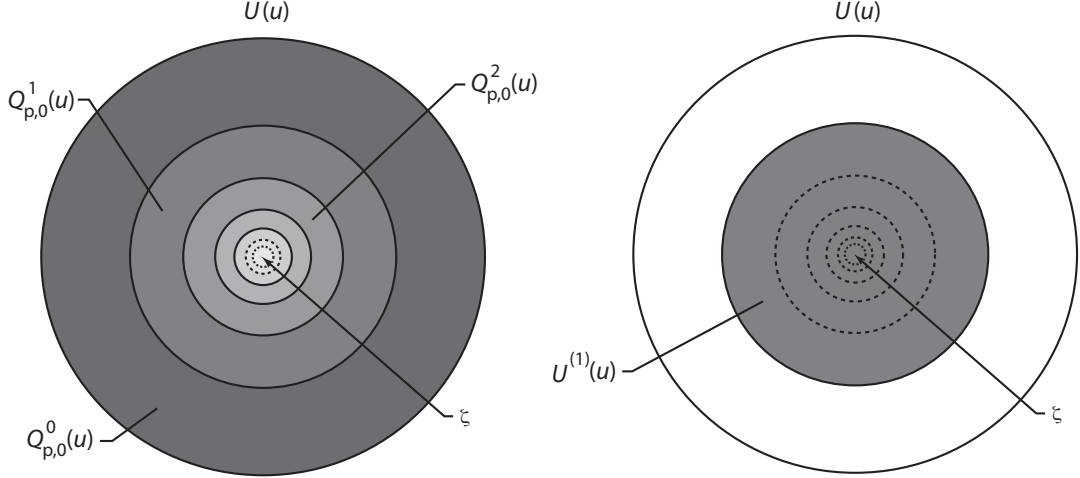} \caption{Notation}\label{fig:notation}
\end{figure}

We are now ready to state:\begin{condition}[$D_p(u_n)^*$]\label{cond:Dp*}We say that $D_p(u_n)^*$ holds
for the sequence $X_0,X_1,X_2,\ldots$ if for any integers $t, \kappa_1,\ldots,\kappa_\varsigma$, $n$ and
 any $J=\cup_{i=2}^\varsigma I_j\in \mathcal R$ with $\inf\{x:x\in J\}\ge t$, 
 \[ \left|\p\left(Q_{p,0}^{\kappa_1}(u_n)\cap \left(\cap_{j=2}^\varsigma \nn_{u_n}(I_j)=\kappa_j \right) \right)-\p\left(Q_{p,0}^{\kappa_1}(u_n)\right)
  \p\left(\cap_{j=2}^\varsigma \nn_{u_n}(I_j)=\kappa_j \right)\right|\leq \gamma(n,t),
\]
where for each $n$ we have that $\gamma(n,t)$ is nonincreasing in $t$  and
$n\gamma(n,t_n)\to0$  as $n\rightarrow\infty$, for some sequence
$t_n=o(n)$. 
\end{condition}
This mixing condition is much weaker than the original $D(u_n)$ from Leadbetter \cite{L73} or $\Delta(u_n)$ from \cite{LR98} because the first of the two events separated by the time gap, namely $Q_{p,0}^{\kappa_1}(u_n)$, is geometrically simple since it corresponds to an annulus. This is not the case for $D(u_n)$ and  $\Delta(u_n)$ which require uniform estimates for events which possibly correspond to geometrically intricate sets. As a consequence of this seemingly small advantage, unlike $D(u_n)$ and  $\Delta(u_n)$, condition $D_p(u_n)^*$ can be easily verified for systems with sufficiently fast decay of correlations.     

Assuming $D_p(u_n)^*$ holds, let $(k_n)_{n\in\N}$ be a sequence of integers such that
\begin{equation}
\label{eq:kn-sequence}
k_n\to\infty\quad \mbox{and}\quad  k_n t_n = o(n). 
\end{equation}

\begin{condition}[$D'_p(u_n)^*$]\label{cond:D'p} We say that $D'_p(u_n)^*$
holds for the sequence $X_0,X_1,X_2,\ldots$ if there exists a sequence $(k_n)_{n\in\N}$ satisfying \eqref{eq:kn-sequence} and such that
\begin{equation}
\label{eq:D'rho-un}
\lim_{n\rightarrow\infty}\,n\sum_{j=1}^{[n/k_n]}\p( Q_{p,0}(u_n)\cap
\{X_j>u_n\})=0.
\end{equation}
\end{condition}
This condition is a  slight strengthening of $D'_p(u_n)$ from \cite{FFT12}, since the occurrence of an escape at time $j$, $Q_{p,j}(u_n)$, was replaced here by the occurrence of the exceedance $\{X_j>u_n\}$. However, since in practice it is easier to check $D'_p(u_n)^*$ and in this paper it makes the forthcoming computations much simpler, we decided to require this stronger version of $D'_p(u_n)$. We recall that $D'_p(u_n)$ is very similar to Leadbetter's $D'(u_n)$ from \cite{L83}, except that instead of preventing the clustering of exceedances it prevents the clustering of escapes by requiring that they should appear scattered fairly evenly through the time interval from $0$ to $n-1$.

We can now state the main theorem. 

\begin{theorem}
\label{thm:convergence-point-process} Let $X_0, X_1, \ldots$ be given by \eqref{eq:def-stat-stoch-proc-DS}, where $\varphi$ achieves a global maximum at the repelling periodic point $\zeta$, of prime period $p$, and conditions \ref{item:U-ball} 
 and \ref{item:repeller} hold. 
 Let
$(u_n)_{n\in\N}$ be a sequence satisfying \eqref{eq:def-un}.  
Assume that conditions 
$D_p(u_n)^*$, $D'_p(u_n)^*$ hold.
Then the EPP $N_n$ converges in distribution to a compound Poisson process $N$ with intensity $\theta$ and multiplicity d.f. $\pi$ given by
$
\pi(\kappa)=\theta(1-\theta)^\kappa,
$
for every $\kappa\in\N_0$, where the extremal index $\theta$ is given by the expansion rate at $\zeta$ stated in \ref{item:repeller}.
\end{theorem}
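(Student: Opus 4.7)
Since the target process has multiple points, Kallenberg's criterion used in \cite[Theorem~5]{FFT10} is unavailable, and the natural replacement is to prove convergence of finite-dimensional Laplace transforms. For any finite family of disjoint intervals $I_1,\ldots,I_\varsigma\in\S$ and nonnegative $y_1,\ldots,y_\varsigma$, the goal is
\[
\lim_{n\to\infty}\E\Bigl[\exp\Bigl(-\sum_{j=1}^\varsigma y_j N_n(I_j)\Bigr)\Bigr]=\prod_{j=1}^\varsigma\exp\bigl(-\theta|I_j|(1-\phi(y_j))\bigr),
\]
where $\phi$ is the Laplace transform of the geometric multiplicity law from Remark~\ref{rem:compound-poisson}. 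Standard point-process theory then upgrades pointwise convergence of all such Laplace transforms to weak convergence of the REPP.

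The first step is the standard blocking decomposition. Partition each rescaled interval $v_nI_j$ into $k_n$ sub-blocks of length $\ell_n\sim v_n|I_j|/k_n$ separated by gaps of length $t_n$ supplied by $D_p(u_n)^*$. Because $D_p(u_n)^*$ is formulated with the geometrically simple annular event $Q_{p,0}^{\kappa_1}(u_n)$ on the left, it can be applied recursively to peel one sub-block at a time inside the Laplace transform, producing the block-wise factorisation up to a total error of order $k_n\varsigma\,\gamma(n,t_n)=o(1)$.

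The second step is the in-block computation. The disjoint annular decomposition
\[
\{X_i>u_n\}=\bigsqcup_{\kappa\ge 0}Q_{p,i}^{\kappa}(u_n)
\]
identifies each exceedance at time $i$ as the start of a forward cluster of size $\kappa+1$. Truncating the admissible cluster size at some $\kappa_{\max}(n)\to\infty$ (the truncation error is controlled by the decaying tail $\p(U^{(\kappa_{\max})}(u_n))\sim(1-\theta)^{\kappa_{\max}}\p(U(u_n))$ from \ref{item:repeller}) and invoking $D'_p(u_n)^*$ to preclude two distinct truncated clusters inside the same block, one obtains
\[
\E\bigl[e^{-y_j N_n(\mathrm{block})}\bigr]=1-\ell_n\sum_{\kappa\ge 0}\p\bigl(Q_{p,0}^{\kappa}(u_n)\bigr)\bigl(1-e^{-(\kappa+1)y_j}\bigr)+o(1/k_n).
\]
Substituting $\p(Q_{p,0}^{\kappa}(u_n))\sim\theta(1-\theta)^{\kappa}\p(U(u_n))$ from \ref{item:repeller} and $v_n\p(U(u_n))\to 1$ from \eqref{eq:def-un}, and raising to the $k_n$-th power, the block contribution integrates to $\exp(-\theta|I_j|\sum_{\kappa\ge 0}(1-\theta)^{\kappa}(1-e^{-(\kappa+1)y_j}))$; a direct calculation identifies the inner sum with $1-\phi(y_j)$, producing the target expression.

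The main obstacle is the coupled calibration of the three auxiliary scales $k_n,t_n,\kappa_{\max}(n)$ so that simultaneously: the mixing error $n\gamma(n,t_n)$ vanishes; the within-block error from $D'_p(u_n)^*$ summed over $k_n$ blocks vanishes; the truncation tail $k_n\ell_n(1-\theta)^{\kappa_{\max}(n)}\p(U(u_n))$ vanishes; and the boundary error from clusters straddling block interfaces, of order $k_n p\kappa_{\max}(n)\p(U(u_n))$, vanishes. The new annular form of $D_p(u_n)^*$ is precisely what makes this calibration feasible, since the classical $D(u_n)$ or $\Delta(u_n)$ conditions would not control the geometrically complex exceedance events one is forced to consider inside a non-linear functional like the Laplace transform.
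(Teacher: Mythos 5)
Your overall route is the paper's route: convergence of finite-dimensional joint Laplace transforms, a blocking decomposition with gaps $t_n$, recursive use of $D_p(u_n)^*$ (with the annulus event on the left) to factorise across blocks, and $D'_p(u_n)^*$ plus the annular decomposition of $U(u_n)$ to compute the single-block transform. However, your in-block computation contains a genuine error. You take
\[
\E\bigl[\e^{-yN_n(\mathrm{block})}\bigr]\approx 1-\ell_n\sum_{\kappa\ge 0}\p\bigl(Q_{p,0}^{\kappa}(u_n)\bigr)\bigl(1-\e^{-(\kappa+1)y}\bigr),
\]
which amounts to asserting $\p(\nn=\kappa+1)\approx \ell_n\,\p(Q_{p,0}^{\kappa}(u_n))$, i.e.\ treating every visit to the annulus $Q_{p,0}^{\kappa}$ as the start of a cluster of size exactly $\kappa+1$. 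But a cluster entering at depth $j>\kappa$ also passes through $Q_{p,0}^{\kappa}$ exactly once on its way out (this is precisely \eqref{eq:ring-dynamics}), so $\ell_n\p(Q_{p,0}^{\kappa}(u_n))$ estimates $\p(\nn\ge\kappa+1)$, not $\p(\nn=\kappa+1)$. The correct first-order block law uses the cluster-start probabilities, i.e.\ the differences $\p(Q_{p,0}^{\kappa-1}(u_n))-\p(Q_{p,0}^{\kappa}(u_n))\sim\theta^2(1-\theta)^{\kappa-1}\p(X_0>u_n)$ (note $\p(Q_{p,0}^{\kappa-1}\cap\{X_{-p}\le u\})=\p(Q_{p,0}^{\kappa-1})-\p(Q_{p,0}^{\kappa})$); this is what the paper obtains via the chain $\{\nn_{u,0}^{s+1}\le\kappa\}\approx\HH_{p,0,s}^\kappa\approx\QQ_{p,0,s}^\kappa$ and $\p(\QQ_{p,0,s}^\kappa)\approx 1-s\p(Q_{p,0}^\kappa)$ in Lemmas~\ref{Lem:disc-ring}--\ref{lem:entrances-ball-depth} and Corollary~\ref{cor:k-ring}.

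The discrepancy is not a harmless reindexing: your block law has mean $\sum_{\kappa}(\kappa+1)\,\ell_n\p(Q_{p,0}^{\kappa})\sim\ell_n\p(X_0>u_n)/\theta$, contradicting the exact stationarity identity $\E[\nn(\mathrm{block})]=\ell_n\p(X_0>u_n)$, whereas the paper's expansion has the correct mean. After raising to the $k_n$-th power your exponent becomes $|I_j|\,\theta\sum_{\kappa\ge0}(1-\theta)^\kappa(1-\e^{-(\kappa+1)y_j})=|I_j|\,\frac{1-\e^{-y_j}}{1-(1-\theta)\e^{-y_j}}$, i.e.\ a compound Poisson law with cluster intensity $1$ instead of $\theta$; in particular your asserted identity ``inner sum $=1-\phi(y_j)$'' is false (the sum equals $\theta^{-1}(1-\tilde\phi(y_j))$ with $\tilde\phi(y)=\theta\e^{-y}/(1-(1-\theta)\e^{-y})$). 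So as written the argument does not produce the stated limit; replacing your identification by the annulus-difference (cluster-start) expansion of Corollary~\ref{cor:k-ring} repairs it, and the remaining blocking/peeling and error-calibration steps you describe are essentially those of Proposition~\ref{prop:main-step}, Corollary~\ref{cor:fgm-main-estimate} and Proposition~\ref{prop:Laplace-estimates}.
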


\begin{remark}
\label{rem:cluster}
The underlying periodicity of the process $X_0, X_1, \ldots$, resulting from the fact that $\varphi$ achieves a global maximum at the periodic point $\zeta$, leads to the appearance of clusters of exceedances whose size depends on the severity of the first exceedance that  begins the cluster. To be more precise, let $x\in \X$: if we have a first exceedance at time $i\in\N$,  which means that $f^i(x)$ enters the ball $U(u)$, then by \eqref{eq:U-decomposition} we must have that $f^i(x)\in Q_{p,0}^\kappa(u)$ for some $\kappa\geq0$, which we express by saying that the entrance at time $i$ had a \emph{depth} $\kappa$. Notice that the deeper the entrance, the closer $f^i(x)$ got to $\zeta$ and the more severe is the exceedance. Now, observe  that if $f^i(x)\in Q_{p,0}^\kappa(u)$ we must have $f^{i+p}(x)\in Q_{p,0}^{\kappa-1}(u),\ldots,f^{i+\kappa p}(x)\in Q_{p,0}^{0}(u)$ and $f^{i+(\kappa+1) p}(x)\notin U(u)$ which means that the size of the cluster initiated at time $i$ is exactly $\kappa+1$ and ends with a visit to the outermost ring $Q_{p,0}^{0}(u)$, which plays the role of an escaping exit from $U(u)$.  So the depth of the entrance in $U(u)$ determines the size of the cluster, and the deeper the entrance, the more severe is the corresponding exceedance and the longer the cluster.     
\end{remark}

\subsection{Applications to dynamical systems}

The theory of HTS is best understood in the context of uniformly hyperbolic dynamical systems.  In the context of HTS to balls, this is usually further restricted to the setting of piecewise conformal systems, in particular to smooth uniformly expanding interval maps. Our first application of the general theorems given above are to such systems, although we do allow quite a lot of flexibility: (countably) infinitely many branches. Our basic assumption will be decay of correlations against $L^1$ observables. Hence we define:
\begin{definition}[Decay of correlations]
\label{def:dc}
Let \( \mathcal C_{1}, \mathcal C_{2} \) denote Banach spaces of real valued measurable functions defined on \( \X \).
We denote the \emph{correlation} of non-zero functions $\phi\in \mathcal C_{1}$ and  \( \psi\in \mathcal C_{2} \) w.r.t.\ a measure $\p$ as
\[
\cv_\p(\phi,\psi,n):=\frac{1}{\|\phi\|_{\mathcal C_{1}}\|\psi\|_{\mathcal C_{2}}}
\left|\int \phi\, (\psi\circ T^n)\, \dif\p-\int  \phi\, \dif\p\int
\psi\, \dif\p\right|.
\]

We say that we have \emph{decay
of correlations}, w.r.t.\ the measure $\p$, for observables in $\mathcal C_1$ \emph{against}
observables in $\mathcal C_2$ if, for every $\phi\in\mathcal C_1$ and every
$\psi\in\mathcal C_2$ we have
 $$\cv_\p(\phi,\psi,n)\to 0,\quad\text{ as $n\to\infty$.}$$
  \end{definition}

We say that we have \emph{decay of correlations against $L^1$
observables} whenever  this holds for $\mathcal C_2=L^1(\p)$  and
$\|\psi\|_{\mathcal C_{2}}=\|\psi\|_1=\int |\psi|\,\dif\p$. 

We state an abstract result that will allow us to show the convergence of the REPP to a compound Poisson process with geometric multiplicity distribution, around repelling periodic points, for systems with decay of correlations against $L^1$ observables. As a corollary we will obtain that this convergence holds for multi dimensional uniformly expanding systems, piecewise expanding systems of the interval (like Rychlik maps) and piecewise expanding systems in higher dimensions like the ones studied by Saussol in \cite{S00}.

\begin{theorem}  \label{thm:decay-L1}
Consider a dynamical system $(\X,\mathcal
B,\p,f)$ for which there exists 
a Banach space $\mathcal C$ of real valued functions such that 
for all $\phi\in\mathcal C$ and $\psi\in L^1(\p)$,
\begin{equation}
\label{DC:L1}
\cv_\mu(\phi,\,\psi,n)\leq  C n^{-2},
\end{equation} where $C>0$ is a constant independent of both $\phi, \psi$.   Let $X_0, X_1, \ldots$ be given by \eqref{eq:def-stat-stoch-proc-DS}, where $\varphi$ achieves a global maximum at the repelling periodic point $\zeta$, of prime period $p$, and conditions \ref{item:U-ball}  and \ref{item:repeller} hold. 
 Let
$(u_n)_{n\in\N}$ be a sequence satisfying \eqref{eq:def-un}.  
 If there exists $C'>0$ such that for all $n$ and $\kappa_1\in\N_0$ we have $\I_{Q_{p,0}^{\kappa_1}(u_n)}\in \mathcal C$, $\|\I_{Q_{p,0}^{\kappa_1}(u_n)}\|_{\mathcal C}\leq C'$ then conditions $D_p(u_n)^*$ and $D_p'(u_n)^*$ hold for $X_0,X_1,\ldots$. 
\end{theorem}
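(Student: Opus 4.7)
The plan is to verify the two mixing conditions one at a time; each is a direct consequence of the $n^{-2}$-decay of correlations combined with the periodic structure at $\zeta$ given by \ref{item:U-ball} and \ref{item:repeller}.

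First, to check $D_p(u_n)^*$, set $\phi := \I_{Q_{p,0}^{\kappa_1}(u_n)}$, so $\|\phi\|_{\mathcal C} \le C'$ by hypothesis. Since $\inf J \ge t$, the indicator of $\bigcap_{j=2}^{\varsigma}\{\nn_{u_n}(I_j)=\kappa_j\}$ depends only on $X_i$ with $i \ge t$ and hence factors as $\psi \circ f^t$ with $\psi\in[0,1]$, so $\|\psi\|_{L^1(\p)} \le 1$. Applying \eqref{DC:L1} at lag $t$ gives
\begin{equation*}
\left|\p\Bigl(Q_{p,0}^{\kappa_1}(u_n)\cap\bigcap_{j=2}^{\varsigma}\{\nn_{u_n}(I_j)=\kappa_j\}\Bigr)-\p(Q_{p,0}^{\kappa_1}(u_n))\,\p\Bigl(\bigcap_{j=2}^{\varsigma}\{\nn_{u_n}(I_j)=\kappa_j\}\Bigr)\right| \le CC' t^{-2}.
\end{equation*}
Setting $\gamma(n,t) := CC' t^{-2}$ (non-increasing in $t$) and $t_n := \lfloor n^{2/3}\rfloor$ yields $t_n=o(n)$ and $n\gamma(n,t_n)=CC' n^{-1/3}\to 0$, so $D_p(u_n)^*$ holds.

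For $D_p'(u_n)^*$, the strategy is to split the sum at an auxiliary threshold $g_n\to\infty$. Pick $k_n\to\infty$ satisfying \eqref{eq:kn-sequence}. For the tail $g_n<j\le [n/k_n]$, apply \eqref{DC:L1} to $\phi=\I_{Q_{p,0}(u_n)}$ and $\psi=\I_{U(u_n)}$ (so $\|\psi\|_1=\p(U(u_n))$):
\begin{equation*}
\p(Q_{p,0}(u_n)\cap\{X_j>u_n\}) \le \p(Q_{p,0}(u_n))\p(U(u_n)) + CC'\p(U(u_n))j^{-2}.
\end{equation*}
Summing, using $\sum_{j>g_n}j^{-2}\le 1/g_n$ and $n\p(U(u_n))\to\tau$, and multiplying by $n$,
\begin{equation*}
n\sum_{j=g_n+1}^{[n/k_n]}\p(Q_{p,0}(u_n)\cap\{X_j>u_n\}) \le \frac{(n\p(U(u_n)))^2}{k_n} + \frac{CC'\,n\p(U(u_n))}{g_n} \longrightarrow \frac{\tau^2}{k_n}+\frac{CC'\tau}{g_n}\to 0.
\end{equation*}

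For the head $1\le j\le g_n$, the plan is to show that for each fixed $j\ge 1$ the summand vanishes once $n$ is large enough, and then pick $g_n$ growing slowly enough (say $g_n:=\max\{J:N_j\le n\text{ for all }1\le j\le J\}$, where $N_j$ is the smallest such threshold) so that the head is identically zero. The case analysis: for $1\le j<p$ it is condition \ref{item:U-ball}; for $j=p$ it is the definition of $Q_{p,0}$; for $j>p$ with $j\not\equiv 0\pmod p$ it follows from continuity of $f^j$ together with $f^j(\zeta)\neq\zeta$ (from primeness); and for $j=kp$ with $k\ge 2$ the ball structure of $U^{(k)}(u_n)=\bigcap_{i=0}^{k}f^{-ip}(U(u_n))$ in \ref{item:repeller}, coupled with the local expansion of $f^p$ at $\zeta$, forces the only preimage of $U(u_n)$ under $f^{kp}$ meeting $U(u_n)$ to be the central branch $U^{(k)}(u_n)\subset f^{-p}(U(u_n))$, which is disjoint from $Q_{p,0}(u_n)=U(u_n)\setminus f^{-p}(U(u_n))$. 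The main obstacle is precisely this $j=kp$, $k\ge 2$ sub-case: one has to exclude "return" orbits which leave $U(u_n)$ at time $p$ and re-enter at time $kp$ via a non-central branch of $f^{-kp}$. The ball structure from \ref{item:repeller}, together with continuity and local expansion near $\zeta$, is the crucial input, since any extra branch of $f^{-kp}(U(u_n))$ lying inside $U(u_n)$ would, for $u_n$ close enough to $u_F$, produce a periodic orbit at $\zeta$ of period smaller than $p$, contradicting primeness. With this structural fact secured, the head sum is zero and both $D_p(u_n)^*$ and $D_p'(u_n)^*$ follow.
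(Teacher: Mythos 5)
Your verification of $D_p(u_n)^*$ and your tail estimate for $D_p'(u_n)^*$ coincide with the paper's proof: same choice of observables $\I_{Q_{p,0}^{\kappa_1}(u_n)}$ and of the indicator depending only on coordinates beyond the gap, same $\gamma(n,t)=CC't^{-2}$, $t_n=n^{2/3}$, and the same use of decay against $L^1$ to extract the factor $\p(U(u_n))$ so that the tail of the sum is $O(\tau^2/k_n+\tau/g_n)$. The difference is in the head of the sum, and there your argument has a genuine gap in the one sub-case that actually matters. For $j=kp$, $k\ge 2$, you claim that an extra branch of $f^{-kp}(U(u_n))$ meeting $U(u_n)$ would ``produce a periodic orbit at $\zeta$ of period smaller than $p$, contradicting primeness.'' This is not correct: such a branch would at most produce (via a fixed-point argument) a periodic point of period dividing $kp$ that is close to, but distinct from, $\zeta$, and the existence of such a point contradicts nothing about the prime period of $\zeta$ itself. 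Primeness only rules out the cases $1\le j<p$ and $j\not\equiv 0 \pmod p$, which you handle correctly (as you do $j=p$); it gives no information for $j=kp$.

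The input that closes the case $j=kp$ is hyperbolicity of $\zeta$, used locally, and this is exactly how the paper argues: by Hartman--Grobman there is a neighbourhood $V$ of $\zeta$ on which $f^p$ is conjugate to its expanding linear part; for $n$ large, a point $x\in Q_{p,0}^0(u_n)$ and its iterates $f^p(x),\dots,f^{kp}(x)$ (for the relevant range of $k$) remain in $V$, and since $f^p(x)$ has already left $U(u_n)$, the expansion away from $\zeta$ prevents any re-entry into $U(u_n)$ before the orbit leaves $V$. The paper packages this as a single escape time $\alpha_n\to\infty$ such that $\p\big(Q_{p,0}^0(u_n)\cap f^{-j}(U(u_n))\big)=0$ for all $j<\alpha_n$, which also makes your fixed-$j$ case analysis and diagonal choice of $g_n$ unnecessary (though that bookkeeping would be fine once the $j=kp$ case is justified by the linearization rather than by primeness). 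So: replace the primeness argument in the $j=kp$ sub-case by the local linearization/monotone-escape argument, and your proof becomes essentially the paper's.
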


Observe that decay of correlations as in \eqref{DC:L1}, against $L^1(\mu)$ observables, is a very strong property. In fact, regardless of the rate (in this case $n^{-2}$), as long as it is summable, one can  actually  show that the system has exponential decay of correlations of H\"older observables against $L^\infty(\mu)$, as it was shown in \cite[Theorem~B]{AFLV11}. However, this property has been proved for uniformly expanding and piecewise expanding systems.

In particular, we apply our results to a class of Rychlik systems $(Y, f, \phi)$, with equilibrium state $\mu_\phi$, so this measure takes the place of $\p$ in this setting.   For more details see Section~\ref{ssec:Rychlik} and references therein. As a consequence of Theorems~\ref{thm:convergence-point-process} and \ref{thm:decay-L1}, we obtain

\begin{corollary}
Suppose that $(Y,f, \phi)$ is a Rychlik system with equilibrium state $\mu_\phi$.  Then for a periodic point $\zeta$ of prime period $p$, the EPP $N_n$ converges in distribution to a compound Poisson process $N$ with intensity $\theta=1-e^{S_p\phi(\zeta)}$ and multiplicity d.f. $\pi$ given by
$
\pi_\kappa=\theta(1-\theta)^\kappa,
$
for every $\kappa\in\N_0$.
\label{cor:Rychlik poiss}
\end{corollary}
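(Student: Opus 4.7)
The strategy is to apply Theorem~\ref{thm:convergence-point-process} via Theorem~\ref{thm:decay-L1}, taking the Banach space $\mathcal C$ to be $BV$, the space of functions of bounded variation on $Y$, and $\p=\mu_\phi$. Rychlik's theorem provides a spectral gap for the transfer operator of $(f,\phi)$ on $BV$, which yields exponential decay of correlations of $BV$ observables against $L^1(\mu_\phi)$ observables; this is far stronger than the summable rate $n^{-2}$ required for~\eqref{DC:L1}.

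For the $BV$ bound on the indicator functions of the annuli, I use that $\zeta$ is a repelling periodic point of prime period $p$: there is a neighbourhood $V\ni\zeta$ on which $f^p$ is a homeomorphism with uniform expansion. For $u$ close to $u_F=\varphi(\zeta)$ (so that $U(u)$ is an interval around $\zeta$ contained in $V$, yielding~\ref{item:U-ball}) the nested sets $U^{(\kappa)}(u)=U(u)\cap f^{-p}(U^{(\kappa-1)}(u))$ are all intervals around $\zeta$ whose lengths shrink geometrically in $\kappa$. Consequently each annulus $Q_{p,0}^{\kappa_1}(u_n)$ is a union of at most two intervals, so $\|\I_{Q_{p,0}^{\kappa_1}(u_n)}\|_{BV}\le 4$ uniformly in $\kappa_1$ and $n$, which is the uniform bound demanded in Theorem~\ref{thm:decay-L1}.

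It remains to identify $\theta$. Rychlik's invariant density $h_\phi=\dif\mu_\phi/\dif\l$ is $BV$ and, because $\zeta$ lies in the interior of a regularity piece of $h_\phi$ (if not, one takes the appropriate one-sided limits and the argument is unaffected), $h_\phi$ is continuous at $\zeta$. Combined with the fact that the Jacobian of $\mu_\phi$ under $f^p$ at $\zeta$ is $e^{-S_p\phi(\zeta)}$, this gives $\mu_\phi(U^{(1)}(u))/\mu_\phi(U(u))\to e^{S_p\phi(\zeta)}$ as $u\to u_F$, and iterating, $\mu_\phi\bigl(\bigcap_{j=0}^i f^{-jp}(U(u))\bigr)\sim e^{iS_p\phi(\zeta)}\mu_\phi(U(u))$. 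Hence~\ref{item:repeller} holds with $1-\theta=e^{S_p\phi(\zeta)}$, that is $\theta=1-e^{S_p\phi(\zeta)}$. Theorem~\ref{thm:decay-L1} then supplies $D_p(u_n)^*$ and $D_p'(u_n)^*$, and Theorem~\ref{thm:convergence-point-process} delivers the claimed convergence to a compound Poisson process of intensity $\theta$ and geometric multiplicity $\pi_\kappa=\theta(1-\theta)^\kappa$. The step requiring the most care is the continuity of $h_\phi$ at the periodic orbit: in the generic case it follows from Rychlik's piecewise-continuous description of $h_\phi$, but the exceptional case where $\zeta$ or one of its iterates sits on a partition boundary must be handled by restricting to the relevant one-sided limit, which yields the same value of $\theta$.
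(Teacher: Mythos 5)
Your proposal is correct and takes essentially the same route as the paper's proof: exponential decay of correlations of $BV$ observables against $L^1$ from Rychlik's theory, a uniform $BV$ bound for the indicators of the annuli $Q_{p,0}^{\kappa_1}(u_n)$ (with the paper's norm $\sup+{\rm Var}$ the constant is $5$ rather than $4$), and then Theorems~\ref{thm:decay-L1} and \ref{thm:convergence-point-process}. The only differences are minor: the paper invokes \cite[Theorem~5]{FFT12} to obtain \ref{item:U-ball}, \ref{item:repeller} and the formula $\theta=1-e^{S_p\phi(\zeta)}$ for observables of the form $\varphi(x)=g\bigl(\mu_\phi(B_{\dist(x,\zeta)}(\zeta))\bigr)$ rather than re-deriving the scaling, and the invariant density should be taken with respect to the conformal measure $m_\phi$ rather than Lebesgue measure (for general Rychlik systems, e.g.\ the Bernoulli example, these differ) --- neither point affects your argument.
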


Note that Corollary~\ref{cor:Rychlik poiss} applies to many different uniformly hyperbolic interval maps with `good' invariant measures.  For example it applies to topologically transitive uniformly expanding maps of the interval with an absolutely continuous invariant probability measure (acip), including the systems studied in \cite{BSTV03} and \cite{FP12}.

We can also apply our results to higher dimensional piecewise expanding systems like the ones studied in \cite{S00}. 
\begin{corollary}
Let $f:\X\to\X$ be a piecewise expanding map as defined in \cite[Section~2]{S00}, with an acip $\mu$.  Then for a periodic point $\zeta$ of prime period $p$, the EPP $N_n$ converges in distribution to a compound Poisson process $N$ with intensity $\theta=1-\left|\det D(f^{-p})(\zeta)\right|$ and multiplicity d.f. $\pi$ given by
$
\pi_\kappa=\theta(1-\theta)^\kappa,
$
for every $\kappa\in\N_0$.
\label{cor:saussol}
\end{corollary}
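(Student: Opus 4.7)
The plan is to verify that Saussol's class of piecewise expanding maps satisfies the hypotheses of Theorem~\ref{thm:decay-L1}, and then invoke that result together with Theorem~\ref{thm:convergence-point-process}. The Banach space $\mathcal C$ to take is the space of quasi-H\"older functions introduced in \cite{S00}, on which the Perron--Frobenius operator of $f$ has a spectral gap and therefore produces exponential decay of correlations against $L^1(\mu)$; in particular the summable bound \eqref{DC:L1} follows directly. The geometric conditions \ref{item:U-ball} and \ref{item:repeller} at the repelling periodic point $\zeta$ reduce to standard facts about the linearization of $f^{-p}$ at $\zeta$: $U(u_n)$ is a topological ball around $\zeta$ for $n$ large, primitivity of $p$ gives the disjointness required in \ref{item:U-ball}, and the nested intersection $\bigcap_{j=0}^i f^{-jp}(U(u_n))$ is, modulo a negligible error, the ellipsoidal image $(Df^{-p}(\zeta))^i(U(u_n))$, whose Lebesgue volume contracts by the factor $|\det Df^{-p}(\zeta)|$ per iterate. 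Since the invariant density is bounded and (for the regular representative) has a well-defined value at $\zeta$, this asymptotic transfers to $\mu$ and identifies the extremal index as $\theta = 1-|\det Df^{-p}(\zeta)|$.

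The main technical step is the uniform estimate $\|\I_{Q_{p,0}^{\kappa_1}(u_n)}\|_{\mathcal C}\le C'$ for all $n$ and all $\kappa_1\in\N_0$. Each annulus $Q_{p,0}^{\kappa_1}(u_n)$ is the set-theoretic difference between the two topological balls $U(u_n)$ and $U^{(\kappa_1+1)}(u_n)$, both with smooth boundaries of uniformly controlled curvature (the inner one being the image of $U(u_n)$ under iterated composition with a fixed linear contraction). The Saussol norm of an indicator $\I_A$ is controlled by $\mu(A)$ plus a term dominated by $\mathcal H^{d-1}(\partial A)$, a direct consequence of the estimate $\int\mathrm{osc}(\I_A,B_\epsilon(x))\,dx\le C\epsilon\,\mathcal H^{d-1}(\partial A)$ for sets with piecewise smooth boundary. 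This is where I expect the main effort to go: one must confirm that the combined boundary content of the inner and outer spheres remains bounded uniformly in $\kappa_1$ and $n$. Since both spheres lie inside $U(u_n)$, their $(d-1)$-Hausdorff contents are bounded by a constant times the $(d-1)$-th power of the radius of $U(u_n)$, which tends to zero with $n$. Hence the norm is in fact uniformly small for $n$ large, and certainly bounded.

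With these ingredients in place, Theorem~\ref{thm:decay-L1} yields conditions $D_p(u_n)^*$ and $D_p'(u_n)^*$ for the process $X_0,X_1,\ldots$, and Theorem~\ref{thm:convergence-point-process} concludes the argument, delivering the compound Poisson convergence with intensity $\theta = 1-|\det Df^{-p}(\zeta)|$ and geometric multiplicity $\pi_\kappa=\theta(1-\theta)^\kappa$ as claimed.
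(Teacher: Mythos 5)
Your proposal follows essentially the same route as the paper: decay of correlations against $L^1$ from Saussol's quasi-H\"older space, verification of \ref{item:U-ball} and \ref{item:repeller} at the repelling periodic point with $\theta=1-\left|\det D(f^{-p})(\zeta)\right|$ (the paper, assuming $\varphi(x)=g(\dist(x,\zeta))$ and $\zeta$ a Lebesgue density point of the density, cites \cite[Theorem~3]{FFT12} for this), and a uniform bound on $\|\I_{Q_{p,0}^{\kappa_1}(u_n)}\|_{\alpha}$, followed by Theorems~\ref{thm:decay-L1} and \ref{thm:convergence-point-process}. The only difference is that you sketch the norm bound via the boundary Hausdorff content of the two nested balls, where the paper simply asserts it is easy to check.
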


The final part of this paper is concerned with applying the above theory to non-uniformly hyperbolic interval maps.  The main problem here is that in many of these situations it is hard to check conditions $D_p(u_n)^*$ and $D_p'(u_n)^*$, and in some of them it is unlikely that they hold. One way to nevertheless obtain results about these systems is to apply the approach in \cite{BSTV03}, where it was shown that, in many cases, first return maps have the same HTS as the original system for almost every point in the space.  What is different in our case is that we must pick a particular point $\zeta$ and prove the analogous result.  This is challenging because we cannot, as in the proof of \cite[Theorem 1]{BSTV03} simply exclude a zero measure set of points which have bad behaviour: we have to prove our theorem about the particular choice $\zeta$.  On the other hand the proof is assisted by the fact that periodic points have very well-understood behaviour, in particular we can transfer information from small scales to large scales.

We introduce standard measure notation:

\textbf{Notation:}  Given a finite measure $\mu$ on $\X$ and a measurable set $A\subset \X$, let $\mu_A$ be the corresponding conditional measure on $A$, i.e., for $B\subset \X$, $\mu_A(B)=\mu(A\cap B)/\mu(A)$.

As before, let us assume we have a system $f:\X\to \X$, where $f$ is a Borel measurable transformation of the smooth Riemannian manifold $\X$, and now with an invariant probability measure $\mu$ (which we can also denote by $\p$ as usual).  Suppose that $\zeta\in \X$ is a periodic point of prime period $p$.  We pick some subset $\hat \X\subset \X$ and let $R_{\hat \X}$ be the first return time to $\hat \X$, and $\hat f=f^{R_{\hat \X}}$ be the first return map to $\hat \X$.   We will always assume that $\hat \X$ is so small that $R_{\hat \X}(\zeta)= p$.  Also let $\hat\mu=\mu_{\hat \X}$ (alternatively we can write $\p(\cdot |\hat \X)$).  Note that by Kac's Lemma, $\hat\mu$ is $\hat f$-invariant. 

This new setting gives rise to a new set of random variables
$$\hat X_n=\phi\circ \hat f^n.$$
 We can thus consider $\hat \nn_u(\hat v J)$ for $J\in \RR$ and $\hat v=1/\p(U(u)|\hat \X)$ defined analogously to \eqref{eq:def-REPP} for the original system.   

Let $\Law$ be such that, for every $J\in\mathcal S$  and $\kappa\in\N_0$,  
\begin{equation}
\label{eq:def-hatG}
\lim_{u\to u_F} \hat\mu\left(\left\{\hat\nn_u(\hat v J)\leq \kappa\right\}\right)=\Law(J,\kappa),
\end{equation}
where $\Law(J,\cdot)$ corresponds to a d.f. of an integer valued r.v. We will assume that $\Law$ is continuous, in the sense that  $\lim_{\delta\to0}\Law((1\pm\delta)J,\kappa)=\Law(J,\kappa)$, for every $\kappa$. 
Note that we will apply our results to the case that $\Law(J, \kappa)=\p(N(J)=\kappa)$, where $N$ is a compound Poisson process of intensity $\theta$ and a geometric multiplicity. 

In the following theorem we will impose two pairs of conditions on our system,  see Section~\ref{sec:BSTV} for details.  The first pair (M1) and (M2) concern the measure we put on our system: essentially we want it to be an equilibrium state which behaves very like the corresponding conformal measure for some potential.  For example the conformal measure could be Lebesgue measure and the equilibrium state given by any density which is uniformly bounded away from zero and infinity.  The second pair of conditions (S1) and (S2) ensure that the measures `scale well' around our point $\zeta$.  So to continue the example, the dynamics could be a $C^2$ interval/circle map with $\zeta$ a repelling fixed point, so the Lebesgue measure of iterates of a ball centred at $\zeta$ scales like the derivative of the map at $\zeta$.

\begin{theorem}
Suppose that $(\X,f)$ is a dynamical system that $\zeta$ is a periodic point of prime period $p$ and $(\hat \X, \hat f)$ are defined as above.  If the induced system satisfies conditions (M1), (M2), (S1),(S2) and the limit defining $\Law$ as in \eqref{eq:def-hatG} exists and defines a continuous function, then for each $J\in \RR$ there exists $(\delta_{J}(u))_{u>0}> 0$ such that $\delta_{J}(u)\searrow 0$ as $u\nearrow u_F$  and for $\kappa\in\N_0$,
\begin{equation*}
\left| \p\left(\{\nn_u( v J)\le \kappa\}\right)-\Law(J, t) \right|<\delta_{J}(r).\label{eq:N_u orig}
\end{equation*}
\label{thm:ind same}
\end{theorem}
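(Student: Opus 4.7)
The plan is a two-step comparison: first show $\hat\mu\{\nn_u(vJ)\le\kappa\}\to\Law(J,\kappa)$ via a deterministic time-change, then transfer from $\hat\mu$ to $\mu$ using the Abramov/Kac identity. The foundational observation is that because $\zeta$ has prime period $p$ and $R_{\hat\X}(\zeta)=p$, continuity of $R_{\hat\X}$ at $\zeta$ ensures that for $u$ sufficiently close to $u_F$ we have $U(u)\subset\hat\X$ and $\hat f|_{U(u)}=f^p|_{U(u)}$. In particular, since $U(u)\subset\hat\X$, every visit of the $f$-orbit of a point $y\in\hat\X$ to $U(u)$ is automatically a return visit to $\hat\X$, giving the pathwise identity
\[
\nn_u([0,T))(y)=\hat\nn_u([0,K(T,y)))(y),\qquad K(T,y):=\#\{k\ge 0:S_k(y)<T\},
\]
where $S_k(y):=\sum_{i<k}R_{\hat\X}(\hat f^i(y))$.

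Next I fix the time-scaling. Kac's lemma gives $\int R_{\hat\X}\,d\hat\mu=1/\mu(\hat\X)$, and by definition $\hat v=1/\hat\mu(U(u))=\mu(\hat\X)\cdot v$; the $L^1$-ergodic theorem for $\hat f$ then yields $K(vJ,\cdot)/(\hat v J)\to 1$ in $\hat\mu$-probability as $u\to u_F$. On the high-probability event $\{K(vJ,y)\in((1-\delta)\hat v J,(1+\delta)\hat v J)\}$, monotonicity of $\hat\nn_u$ in its time argument gives
\[
\hat\nn_u([0,(1-\delta)\hat v J))(y)\le\nn_u([0,vJ))(y)\le\hat\nn_u([0,(1+\delta)\hat v J))(y).
\]
Combining with \eqref{eq:def-hatG} and the assumed continuity $\lim_{\delta\to 0}\Law((1\pm\delta)J,\kappa)=\Law(J,\kappa)$, and then letting $\delta\to 0$, produces $\hat\mu\{\nn_u(vJ)\le\kappa\}\to\Law(J,\kappa)$.

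Finally, I transfer from $\hat\mu$ to $\mu$ via the Abramov/Kac identity $\mu(A)=\mu(\hat\X)\int_{\hat\X}\sum_{i=0}^{R_{\hat\X}(y)-1}\I_A(f^i(y))\,d\hat\mu(y)$ applied to $A=\{\nn_u(vJ)\le\kappa\}$. For $y\in\hat\X$ with $R_{\hat\X}(y)>i$, none of $y,f(y),\ldots,f^{R_{\hat\X}(y)-1}(y)$ lies in $U(u)\subset\hat\X$, so the indicator $\I_{\{\nn_u([i,vJ+i))(y)\le\kappa\}}$ differs from $\I_{\{\nn_u([0,vJ))(y)\le\kappa\}}$ only through the right-boundary count $\nn_u([vJ,vJ+i))(y)$, whose expected contribution, after integration against the $\hat\mu$-integrable $R_{\hat\X}$, is $o(1)$ as $u\to u_F$. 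Collapsing the inner sum and applying Kac's formula once more yields $\mu\{\nn_u(vJ)\le\kappa\}\to\Law(J,\kappa)$, with an explicit rate $\delta_J(u)\searrow 0$ assembled from the ergodic-theorem rate and the boundary estimate. The main obstacle is making this last boundary bound effective uniformly in $u$: conditions (M1), (M2) guarantee comparability of $\mu$ to a conformal reference measure on $\hat\X$, while (S1), (S2) control the scaling of that reference measure on small balls around $\zeta$, and it is the combination of these that prevents the density of $\hat\mu$ from concentrating near $\zeta$ in a way that would invalidate the first-moment estimate on the boundary count.
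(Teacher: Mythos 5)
Your first step is fine, and it is a genuinely cleaner route than the paper's for that half: since $U(u)\subset\hat\X$ for $u$ near $u_F$, the exact pathwise identity between $\nn_u$ and $\hat\nn_u$ under the time change $S_k$, together with the Birkhoff theorem for $\hat\mu$ and the continuity of $\Law$, does give $\hat\mu\{\nn_u(vJ)\le\kappa\}\to\Law(J,\kappa)$; here you do not condition on the shrinking ball, so the a.e.\ ergodic theorem suffices and you genuinely bypass the paper's Lemma on conditional density of good sets in the annuli $Q^0(u)$ (which is exactly where the paper spends conditions (M1), (M2), (S1), (S2)). Two small slips: $y$ itself may lie in $U(u)$ even though $f(y),\dots,f^{R_{\hat\X}(y)-1}(y)$ cannot, and $(1\pm\delta)J$ is a dilation, not a nesting, so the sandwich must be written with each endpoint perturbed separately --- both harmless given the continuity of $\Law$.

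The genuine gap is in the transfer from $\hat\mu$ to $\mu$. After you control the boundary counts (which indeed works by a first-moment bound plus truncation in $R_{\hat\X}$, and in fact needs none of (M1)--(S2)), the Kac/Abramov identity collapses to
\[
\p\{\nn_u(vJ)\le\kappa\}\;\approx\;\mu(\hat\X)\int_{\hat\X}R_{\hat\X}\,\I_{\{\nn_u(vJ)\le\kappa\}}\,d\hat\mu\;+\;o(1),
\]
i.e.\ you need convergence of the $R_{\hat\X}$-\emph{weighted} probability of the event, not its $\hat\mu$-probability. Since $R_{\hat\X}$ is unbounded in all the interesting applications (Manneville--Pomeau, the Hofbauer-extension inductions), knowing $\hat\mu(A_u)\to\Law(J,\kappa)$ does not give $\mu(\hat\X)\int R_{\hat\X}\I_{A_u}\,d\hat\mu\to\Law(J,\kappa)$: the events $A_u^c$ have probability converging to a nonzero constant and nothing in the hypotheses forbids them from correlating with $\{R_{\hat\X}\ \mbox{large}\}$ in the intermediate range $1\ll R_{\hat\X}\ll v$ (no mixing is assumed, and indeed such correlation is visible already in the extreme tail, where $R_{\hat\X}>v\sup J$ forces membership of $A_u$). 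Your appeal to (M1), (M2), (S1), (S2) does not close this: those conditions control the conformal scaling of the measure near the specific point $\zeta$, which is what the paper uses (Lemma~\ref{lem:large scale alt}) to show the ergodic-theorem good set fills the annuli around $\zeta$; they say nothing about decorrelation of $R_{\hat\X}$ from exceedance counts over long windows. The paper avoids the weighting problem altogether by proving the \emph{return-time} version, starting both the induced and the original process from the same conditional measure $\mu_{U(u)}=\hat\mu_{U(u)}$, splitting intra-cluster from inter-cluster returns, and only then converting return statistics to hitting statistics via the general results of \cite{CK06,HLV07}; to rescue your route you would need either an analogous conditioning or a uniform-integrability/decorrelation argument for $R_{\hat\X}$ against the events $A_u$, neither of which is supplied.
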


Since, as shown in \cite{BSTV03},  for $\alpha\in (0,1)$  the Manneville-Pomeau map 
\begin{equation*}
g(x)=\begin{cases} x(1+2^\alpha x^\alpha) & \text{ for } x\in [0, 1/2)\\
2x & \text{ for } x\in [1/2, 1)\end{cases}
\end{equation*}
 with the natural potential $\phi=-\log|Dg|$, has a canonical first return map $\hat g:[1/2, 1)\to [1/2, 1)$ (so $\X=[1/2, 1)$) for which $([1/2, 1), \hat g, \hat \phi)$ is a Rychlik system.  Here $\hat\phi$ is the induced version of the potential $\phi$, which is $\log|D\hat g|$ in this case.  Note that the equilibrium state is the acip $\mu$. Theorems~\ref{cor:Rychlik poiss} and \ref{thm:ind same} imply that for any periodic point $\zeta\neq 0$ for $g$, we have the conclusions of Theorem~\ref{thm:convergence-point-process}.

We next extend the application of our theory to interval maps with a critical point.  We consider a class of $C^3$ unimodal interval maps $f:I \to I$ with an acip.  Let $c$ be the critical point.  Such a map is called \emph{$S$-unimodal} if it has negative \emph{Schwarzian derivative}, i.e., $D^3f(x)/Df(x) - \frac32 (D^2f(x)/Df(x))^2<0$ for any $x\in I\sm\{c\}$.  We say that $c$ is \emph{non-flat} if there exists $\ell\in (1,\infty)$ such that $\lim_{x\to c}|f(x)-f(c)|/|x-c|^\ell$ exists and is positive.  Here $\ell$ is called the \emph{order} of the critical point. 

As in  \cite{BSTV03}, if the critical point  has an orbit which is not dense in $I$ (eg the Misiurewicz case), it is possible to construct a first return map which gives a Rychlik system with the natural potential, and thus  the conclusions of Theorem~\ref{thm:convergence-point-process} hold for the system with its acip.  Our last main result goes beyond this theory since it applies to maps where the critical point has a dense orbit and first return maps are not Rychlik.  We can nevertheless recover our limit theorems using the Hofbauer extension techniques of \cite{BV03}.

We will assume that our maps satisfy the \emph{summability condition}:
\begin{equation}
\sum_{n\ge 1} \frac1{|Df^n(f(c))|^\ell}<\infty.\label{eq:summable}
\end{equation}
Nowicki and van Strien \cite{NS91} showed that under this condition, $f$ has an acip $\mu$.  The support of this measure, the usual metric attractor (see for example \cite[Chapter V.1]{MS93})  is a finite union of intervals.  If we were to assume that $f$ was topologically transitive on $I$, i.e. there exists $x_0\in I$ such that $\overline{\cup_{n\ge 0}f^n(x_0)}=I$, then the support is equal to the whole of $I$.  In any case, the metric entropy $h(\mu)$ is strictly positive.

\begin{theorem}
Suppose that $f:I\to I$ is an $S$-unimodal map with non-flat critical point with order $\ell$ satisfying the summability condition \eqref{eq:summable}.  If $\zeta$ is a repelling periodic point of prime period $p$, in the support of the acip $\mu$ and such that
 \begin{equation}
\sum_{n\ge 1} \frac1{|Df^n(f(c))|^\ell|f^k(f(c))-\zeta|^{1-\frac1\ell}}<\infty,\label{eq:dens}
\end{equation}
 then the EPP $N_n$ converges in distribution to a compound Poisson process $N$ with intensity $\theta=1-\frac1{|Df^p(z)|}$ and multiplicity d.f. $\pi$ given by
$
\pi(\kappa)=\theta(1-\theta)^\kappa,
$
for every $\kappa\in\N_0$.
\label{thm:BrV new}
\end{theorem}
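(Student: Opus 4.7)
The strategy is to reduce Theorem~\ref{thm:BrV new} to Corollary~\ref{cor:Rychlik poiss} via an inducing scheme, using the Hofbauer extension technique of \cite{BV03,BT09}, and then transferring the limit back to $f$ by Theorem~\ref{thm:ind same}. More precisely, I would construct the Hofbauer extension $\hat I$ of $(I,f)$ together with its canonical projection $\pi:\hat I\to I$ and the lifted map $\tilde f:\hat I\to\hat I$. The repelling periodic point $\zeta$ admits a lift $\hat\zeta\in\hat I$ of the same prime period $p$, sitting in some domain $D_0$ of the tower. I would choose $\hat\X\subset D_0$ to be a sufficiently small neighbourhood of $\hat\zeta$, on which the first return map $F:=\tilde f^R:\hat\X\to\hat\X$ is a Rychlik system in the sense of Section~\ref{ssec:Rychlik}, with the induced potential $\hat\phi=S_R\phi$ for $\phi=-\log|Df|$. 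The summability hypothesis~\eqref{eq:summable} (via the Nowicki--van Strien construction together with the tower machinery of \cite{BV03,BT09}) gives that such an inducing scheme exists, that $R(\hat\zeta)=p$, and that the equilibrium state $\hat\mu$ for $(\hat\X,F,\hat\phi)$ coincides with the normalised lift of $\mu$ restricted to $\hat\X$.

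Next, I would establish that the density of $\mu$ with respect to Lebesgue measure is uniformly bounded above and bounded away from zero on some neighbourhood of $\zeta$. The existence of the density follows from \eqref{eq:summable}; controlling its pointwise behaviour at $\zeta$ is, in my view, the main obstacle. One has an integral/series representation of the density whose singularities are detected precisely by terms of the form $|Df^n(f(c))|^{-1}\cdot|f^n(f(c))-\zeta|^{-(1-1/\ell)}$, coming from the inverse-branch pullbacks of Lebesgue along pieces of the critical orbit that approach $\zeta$. The hypothesis~\eqref{eq:dens} is exactly what one needs to bound these contributions uniformly, thereby preventing the density from blowing up at $\zeta$. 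This is the Benedicks--Carleson-type step mentioned in the introduction; it requires splitting the sum into ``near'' and ``far'' pieces with respect to $\zeta$ and invoking bounded distortion along the critical orbit.

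With the induced Rychlik system in hand, Corollary~\ref{cor:Rychlik poiss} applies to $(\hat\X,F,\hat\phi)$ at its fixed point $\hat\zeta$ (prime period $1$ for $F$) and yields that the induced REPP $\hat N_n$ converges to a compound Poisson process with intensity
\[
\hat\theta=1-e^{\hat\phi(\hat\zeta)}=1-e^{S_p\phi(\zeta)}=1-\frac{1}{|Df^p(\zeta)|}
\]
and geometric multiplicity of parameter $\hat\theta$. In particular the limit $\Law$ of \eqref{eq:def-hatG} exists and is continuous, as required by Theorem~\ref{thm:ind same}.

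Finally, I would verify that the ambient system satisfies the hypotheses (M1), (M2), (S1), (S2) of Theorem~\ref{thm:ind same}. The measure conditions (M1) and (M2) follow from the thermodynamic formalism of Rychlik systems recalled in Section~\ref{ssec:Rychlik} applied to $(\hat\X,F,\hat\phi)$, together with the identification of $\hat\mu$ with $\mu_{\hat\X}$. The scaling conditions (S1) and (S2) at $\zeta$ reduce, via the comparison between $\mu$ and Lebesgue, to the upper and lower bounds on the density established in the second step, combined with the expansion estimate~\ref{item:repeller} at the periodic point, itself a direct consequence of $\zeta$ being repelling with multiplier $Df^p(\zeta)$. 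Theorem~\ref{thm:ind same} then transfers the compound Poisson limit from the induced system to $(I,f,\mu)$, preserving the intensity $\theta=1-|Df^p(\zeta)|^{-1}$ and the geometric multiplicity, which is precisely the conclusion of Theorem~\ref{thm:BrV new}.
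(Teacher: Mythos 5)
Your overall skeleton (Hofbauer extension, an induced Rychlik map, Corollary~\ref{cor:Rychlik poiss} for the induced system, then a transfer back to $(I,f,\mu)$) is the same as the paper's, but there is a genuine gap at the transfer step. Theorem~\ref{thm:ind same} applies when $\hat f$ is the \emph{first return map of the original system} $f$ to a subset $\hat\X\subset\X$. The induced map you construct lives on the Hofbauer extension: it is the first return map of $\tilde f$ to a set $\tilde{\mathcal U}$ (or to a neighbourhood of $\hat\zeta$ in one domain of the tower), and its projection $F_{\mathcal U}$, $\mathcal U=\pi(\tilde{\mathcal U})$, is \emph{not} in general a first return map of $f$ to $\mathcal U$: a point of $\mathcal U$ can return to $\mathcal U$ under $f$ while its lift lands in a different level of the tower over $\mathcal U$, so the $f$-return happens strictly before the projected tower return. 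Equivalently, counting entrances of the $f$-orbit into $U(u)$ amounts to counting entrances of the lifted orbit into $\pi^{-1}(U(u))$, which is spread over infinitely many domains $D_k$, not just the ball around $\hat\zeta$ used for the induced Rychlik system. So your final invocation of Theorem~\ref{thm:ind same} ``from the induced system to $(I,f,\mu)$'' does not apply as stated; this is exactly the point the paper flags (``Since $F_{\mathcal U}$ is not, in general, a first return map for all points in $\mathcal U$, we have to do a bit more work'').

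The missing ingredient is the analogue of \cite[Lemma 4]{BV03}: one must show that $\mu_{Q^0(u)}\bigl(\{x:\tilde r_{Q^0(u,\tilde{\mathcal U})}(x)\neq r_{Q^0(u)}(x)\}\bigr)\to 0$ as $u\to u_F$ and $\mathcal U$ shrinks to $\zeta$, i.e., that the tower return times and the genuine return times agree off a set of relative measure tending to zero; only then do the compound Poisson statistics obtained for the induced/tower system pass to the REPP of $f$. This is also where hypothesis \eqref{eq:dens} is actually used: in \cite{BV03} the corresponding estimate holds at $\mu$-a.e.\ point thanks to the summability condition, but here $\zeta$ is a \emph{fixed} point of interest, and \eqref{eq:dens} (which controls the density of $\mu$ at $\zeta$, as you correctly identified) is assumed precisely so that this measure comparison holds at $\zeta$ itself. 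In your write-up \eqref{eq:dens} is spent only on verifying (M2)/(S1)/(S2)-type bounds for an application of Theorem~\ref{thm:ind same} that is not available, so the step that actually needs it is absent. (A minor additional point: the Benedicks--Carleson parameter-exclusion argument of the paper is not used to derive the density bound from \eqref{eq:dens}; it is a separate argument, in Section~\ref{sec:BC arg}, producing a positive measure set of quadratic parameters for which \eqref{eq:dens} holds.)
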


In Section~\ref{sec:BC arg}, we use a parameter exclusion argument to prove that there is a large set of parameters in the family of quadratic maps which satisfy the conditions of Theorem~\ref{thm:BrV new} (specifically \eqref{eq:dens}).

\section{Convergence of the REPP to a compound Poisson process}
\label{sec:REPP}

The goal of this section is to prove Theorem~\ref{thm:convergence-point-process}. The major obstacle we have to deal with is the fact that our condition $D_p(u_n)^*$ is much weaker than the usual conditions $D(u_n)$ and $\Delta(u_n)$ from Leadbetter. We will overcome this difficulty with the help of $D'_p(u_n)^*$ and the special structure borrowed by the underlying periodicity. We already faced the same problem in \cite{FFT12}, where the solution was to observe that replacing the role of exceedances by that of escapes does not alter the limit law. Here, the limit for the point process counting exceedances forces us to take a deeper analysis in order to count the weight of the number of exceedances inside a cluster in the overall sum.  Roughly speaking, we will see that a cluster of size $\kappa$ corresponds to an entrance in $Q^\kappa_{p,0}$ and the measure of these rings will give us in particular the multiplicity d.f.   For this convergence we need the following definitions.

\begin{definition}
\label{def:Laplace-rv}
Let $Z$ be a non-negative, integer valued random variable whose distribution is given by $f_Z(\kappa)=\p(Z=\kappa)$. For every $y\in\R_0^+$, the \emph{Laplace transform} $\phi(y)$ of the distribution $f_Z$ is given by
\[
\phi(y):=\E\left(\e^{-yZ}\right)=\sum_{\kappa=0}^\infty \e^{-y \kappa}f_Z(\kappa).
\]  
\end{definition}

\begin{definition}
\label{def:Laplace-point-process}
For a point process $M$ on $\R_0^+$ and $\varsigma$ intervals $I_1, I_2,\ldots, I_\varsigma\in\S$ and non-negative $y_1, y_2,\ldots,y_\varsigma$, we define the \emph{joint Laplace transform} $\psi(y_1, y_2,\ldots,y_\varsigma)$ by
\[
\psi_M(y_1, y_2,\ldots,y_\varsigma)=\E\left(\e^{-\sum_{j=1}^\varsigma y_jM(I_j)}\right).
\] 
\end{definition}

If $M=N$ is a compound Poisson point process with intensity $\lambda$ and multiplicity distribution $\pi$, then given $\varsigma$ intervals $I_1, I_2,\ldots, I_\varsigma\in\S$ and non-negative $y_1, y_2,\ldots,y_\varsigma$ we have:
\[
\psi_N(y_1, y_2,\ldots,y_\varsigma)=\e^{-\lambda\sum_{\ell=1}^\varsigma (1-\phi(y_\ell))|I_\ell|},
\]
where $\phi(y)=\sum_{\kappa=0}^\infty \e^{-y \kappa}\pi(\kappa)$ is the Laplace transform of the multiplicity distribution. 
 
We begin the proof of Theorem~\ref{thm:convergence-point-process} with a series of abstract Lemmata to capture this correspondence between clusters' size and the depth of the entrances. Then we use their estimates to compute the Laplace transform of the REPP and finally show that it converges to the Laplace transform of a compound Poisson process with the right multiplicity d.f.

The next Lemma is a very important observation which will basically allow us to replace the event corresponding to no entrances in $U^{(\kappa)}(u)$, up to a given time, by the event corresponding to no entrances in $Q^\kappa_{p,0}(u)$. The idea behind it is that \eqref{eq:ring-dynamics} imposes a structure that forces an early entrance in $U^{(\kappa)}(u)$, which does not imply an entrance in $Q^\kappa_{p,0}(u)$ during the considered time frame, to be very deep and consequently very unlikely.        

\begin{lemma}
\label{Lem:disc-ring}
For any $p\in\N$, $s,\kappa\in\N\cup\{0\}$ and $u$ sufficiently close to $u_F=\varphi(\zeta)$ we have
\[
\left|\p\big(\HH_{p,0,s}^\kappa(u)\big)-\p\big(\QQ_{p,0,s}^\kappa(u)\big)\right|\leq p\sum_{i=\kappa+1}^{\infty} \p(U^{(i)}(u)).
\]
\end{lemma}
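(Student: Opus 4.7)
The plan is to reduce to bounding one-sided set difference and then exploit the periodic orbit structure at $\zeta$. Since $Q_{p,0}^\kappa(u)=U^{(\kappa)}(u)\setminus U^{(\kappa+1)}(u)\subset U^{(\kappa)}(u)$, I immediately get $\HH_{p,0,s}^\kappa(u)\subset \QQ_{p,0,s}^\kappa(u)$, so the absolute value can be dropped and it suffices to bound $\p\bigl(\QQ_{p,0,s}^\kappa(u)\setminus \HH_{p,0,s}^\kappa(u)\bigr)$. On this symmetric difference there exists at least one visit time $i\in[0,s-1]$ with $f^i(x)\in U^{(\kappa)}(u)$, but no visit to the annulus $Q_{p,0}^\kappa(u)$; equivalently, every visit to $U^{(\kappa)}(u)$ in $[0,s-1]$ is actually a visit to $U^{(\kappa+1)}(u)$.

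Next I group visit times by residue class modulo $p$. Within a class $r\in\{0,\ldots,p-1\}$, since $U^{(\kappa+1)}(u)\subset f^{-p}(U^{(\kappa)}(u))$, whenever $j\equiv r\pmod p$ is a visit and $j+p\le s-1$ we have $f^{j+p}(x)\in U^{(\kappa)}(u)$, which by the $\QQ$ constraint forces $f^{j+p}(x)\in U^{(\kappa+1)}(u)$, so $j+p$ is also a visit. Consequently the visits in class $r$ form a maximal arithmetic progression $j_0,j_0+p,\ldots,j_0+(l-1)p$ with $l=\lfloor(s-1-j_0)/p\rfloor+1$, determined entirely by the first visit time $j_0$. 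A backward induction along this block, using the recursion $U^{(m+1)}(u)=f^{-p}(U^{(m)}(u))\cap U(u)$, then upgrades the depth at $j_0$: the last visit lies in $U^{(\kappa+1)}$; the second-to-last lies in $U^{(\kappa+2)}$ because it lies in $U$ and its $p$-th iterate lies in $U^{(\kappa+1)}$; continuing down the block, $f^{j_0}(x)\in U^{(\kappa+l)}(u)$.

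The lemma then follows from a union bound over residue classes and first-visit times, together with $f$-invariance of $\p$. Letting $M_r+1$ denote the number of times in $[0,s-1]$ congruent to $r$ modulo $p$, and observing that as $j_0$ runs over its possible values in class $r$ the associated depth $l$ takes each value in $\{1,\ldots,M_r+1\}$ exactly once, I obtain
\begin{align*}
\p\!\left(\QQ_{p,0,s}^\kappa(u)\setminus\HH_{p,0,s}^\kappa(u)\right)
&\le \sum_{r=0}^{p-1}\sum_{l=1}^{M_r+1}\p\!\left(U^{(\kappa+l)}(u)\right)\\
&\le p\sum_{i=\kappa+1}^{\infty}\p\!\left(U^{(i)}(u)\right),
\end{align*}
which is the stated bound.

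The main obstacle is getting the backward induction correct in the second paragraph. A naive union bound over all possible first-visit times simply gives $\p(\QQ\setminus\HH)\le s\cdot \p(U^{(\kappa+1)}(u))$, which grows with $s$ and would be useless for the Laplace-transform computations to come. It is the combination of the consecutive-block structure within each residue class and the cumulative backward contraction it forces on the very first visit that kills the $s$-dependence, replaces it by the geometric series tail $\sum_{i\ge\kappa+1}\p(U^{(i)}(u))$, and produces the $p$-factor in the stated bound.
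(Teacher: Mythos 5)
Your proof is correct and follows essentially the same route as the paper: after the containment $\HH_{p,0,s}^\kappa(u)\subset\QQ_{p,0,s}^\kappa(u)$, the no-annulus constraint together with the ring dynamics forces the earliest visit to $U^{(\kappa+1)}(u)$ to lie at depth at least $\kappa+\lfloor(s-i)/p\rfloor+1$, and a union bound with stationarity yields $p\sum_{i\ge\kappa+1}\p(U^{(i)}(u))$. Your per-residue-class backward induction is just a bookkeeping variant of the paper's argument, which instead takes the global minimal visit time and derives the forced depth by a forward contradiction (an insufficiently deep visit would be expelled into $Q_{p,0}^\kappa(u)$ before time $s$).
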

\begin{proof}
First observe that since $Q_{p,0}^\kappa(u)\subset U^{(\kappa)}(u)$ we have 
$\HH_{p,0,s}^\kappa(u)\subset\QQ_{p,0,s}^\kappa(u).$ 
Next, note that if $\QQ_{p,0,s}^\kappa(u)\setminus \HH_{p,0,s}^\kappa(u)$ occurs, then we may define $i=\min\{j\in\{0,1,\ldots s\}:\; X_j\in U^{(\kappa+1)}(u)\}$ and $\ell_i=[ \frac {s-i}{p}]$. But since $\QQ_{p,0,s}^\kappa(u)$ does occur, we must have $X_{i}\in U^{(\kappa+\ell_i+1)}(u)$, otherwise, by \eqref{eq:ring-dynamics}, there would exist $j_i\leq \ell_{i}$ such that $X_{i+j_ip}\in Q_{p,0}^\kappa (u)$,  which contradicts the occurrence of $\QQ_{p,0,s}^\kappa(u_n)$. This means that
\[
\QQ_{p,0,s}^\kappa(u)\setminus \HH_{p,0,s}^\kappa(u)\subset \bigcup_{i=0}^{s} \{X_{i}\in U^{(\kappa+\ell_i+1)}(u)\}.
\]
Hence, it follows that
\[
\left|\p\big(\HH_{p,0,s}^\kappa\big)-\p\big(\QQ_{p,0,s}^\kappa\big)\right|= \p\left(\QQ_{p,0,s}^\kappa(u)\setminus \HH_{p,0,s}^\kappa(u)\right)\leq p\sum_{i=\kappa+1}^{[s/p]} \p(U^{(i)}(u))\leq p\sum_{i=\kappa+1}^{\infty} \p(U^{(i)}(u)).
\]
\end{proof}
The next result is a technical, but useful, lemma which is a consequence of the law of total probability.

\begin{lemma}
\label{lem:no-entrances-ring}
For any $p\in\N$, $s,\kappa\in\N\cup\{0\}$ and $u$ sufficiently close to $u_F=\varphi(\zeta)$ we have
\[
\left|\p\big(\QQ_{p,0,s}^\kappa(u)\big)-\big(1-s\p(Q_{p,0}^\kappa(u))\big)\right|\leq s\sum_{j=p+1}^s\p(Q_{p,0}^0(u)\cap \{X_j>u\})
\]
\end{lemma}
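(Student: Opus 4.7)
The plan is to expand $\p(\QQ_{p,0,s}^\kappa(u))$ as a telescoping sum via the law of total probability. Writing
\[
\QQ_{p,0,s}^\kappa(u) = \QQ_{p,0,s-1}^\kappa(u) \setminus Q_{p,s-1}^\kappa(u)
\]
and iterating with the convention $\QQ_{p,0,0}^\kappa(u) = \X$, one obtains
\[
\p(\QQ_{p,0,s}^\kappa(u)) = 1 - \sum_{i=0}^{s-1}\p\bigl(\QQ_{p,0,i}^\kappa(u) \cap Q_{p,i}^\kappa(u)\bigr).
\]
For each $i$, stationarity gives $\p(Q_{p,i}^\kappa(u)) = \p(Q_{p,0}^\kappa(u))$, while
\[
\p(Q_{p,i}^\kappa(u)) - \p\bigl(\QQ_{p,0,i}^\kappa(u) \cap Q_{p,i}^\kappa(u)\bigr) = \p\Bigl(Q_{p,i}^\kappa(u) \cap \bigcup_{j<i} Q_{p,j}^\kappa(u)\Bigr) \leq \sum_{j<i}\p\bigl(Q_{p,j}^\kappa(u)\cap Q_{p,i}^\kappa(u)\bigr).
\]
Summing these estimates yields
\[
\left|\p(\QQ_{p,0,s}^\kappa(u)) - \bigl(1 - s\p(Q_{p,0}^\kappa(u))\bigr)\right| \leq \sum_{0\leq j<i\leq s-1}\p\bigl(Q_{p,j}^\kappa(u)\cap Q_{p,i}^\kappa(u)\bigr),
\]
so it remains to majorise this double sum by $s\sum_{j=p+1}^s \p(Q_{p,0}^0(u)\cap \{X_j>u\})$.

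By stationarity, setting $\ell = i-j$, the double sum is bounded by $s\sum_{\ell=1}^{s-1}\p\bigl(Q_{p,0}^\kappa(u)\cap Q_{p,\ell}^\kappa(u)\bigr)$. The heart of the argument is then to show: (i) $Q_{p,0}^\kappa(u) \cap Q_{p,\ell}^\kappa(u) = \emptyset$ for every $\ell\in\{1,\ldots,(\kappa+1)p\}$; and (ii) for $\ell > (\kappa+1)p$, $\p\bigl(Q_{p,0}^\kappa(u)\cap Q_{p,\ell}^\kappa(u)\bigr) \leq \p\bigl(Q_{p,0}^0(u)\cap\{X_{\ell-\kappa p}>u\}\bigr)$. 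Once these are in place, the substitution $m = \ell-\kappa p$ sends the surviving range into $\{p+1,\ldots,s-1-\kappa p\} \subset \{p+1,\ldots,s\}$, producing the advertised bound.

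For (i), I split into two subcases. If $\ell = ip$ with $1\leq i\leq \kappa+1$, then $Q_{p,ip}^\kappa(u)$ imposes $X_{ip+(\kappa+1-i)p} = X_{(\kappa+1)p} > u$, contradicting the condition $X_{(\kappa+1)p}\leq u$ built into $Q_{p,0}^\kappa(u)$. If $\ell = ip+r$ with $r\in\{1,\ldots,p-1\}$ and $0\leq i\leq \kappa$, then the condition $X_{ip}>u$ from $Q_{p,0}^\kappa(u)$ together with \ref{item:U-ball} forces $X_\ell\leq u$, contradicting the condition $X_\ell>u$ in $Q_{p,\ell}^\kappa(u)$. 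For (ii), the defining inequalities $X_{\kappa p}>u$, $X_{(\kappa+1)p}\leq u$ give $Q_{p,0}^\kappa(u)\subset f^{-\kappa p}(Q_{p,0}^0(u))$, while $Q_{p,\ell}^\kappa(u)\subset\{X_\ell>u\}$, hence
\[
Q_{p,0}^\kappa(u)\cap Q_{p,\ell}^\kappa(u) \subset f^{-\kappa p}\bigl(Q_{p,0}^0(u)\cap\{X_{\ell-\kappa p}>u\}\bigr),
\]
and $f$-invariance of $\p$ delivers the required bound.

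The main obstacle is the case analysis in step (i): one has to juggle the nested ring structure of the sets $Q_{p,0}^\kappa(u)$ against the precise form of \ref{item:U-ball} to be sure every $1\leq\ell\leq(\kappa+1)p$ is eliminated and that the index shift by $\kappa p$ in the remaining regime aligns exactly with the sum $\sum_{j=p+1}^s$ on the right-hand side. Everything else reduces to stationarity and elementary set manipulations.
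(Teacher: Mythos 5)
Your proposal is correct and takes essentially the same route as the paper's proof: both reduce the problem to the second-order (Bonferroni-type) bound by $\sum_{i<j}\p\big(Q_{p,i}^\kappa(u)\cap Q_{p,j}^\kappa(u)\big)$ and then control each term via the inclusion $Q_{p,i}^\kappa(u)\cap Q_{p,j}^\kappa(u)\subset f^{-(i+\kappa p)}\big(Q_{p,0}^0(u)\big)\cap\{X_j>u\}$ together with stationarity. The only difference is presentational: you derive the Bonferroni bound by a first-entrance telescoping and spell out explicitly (via \ref{item:U-ball} and the ring structure) that the intersection is empty for all gaps $\ell\le(\kappa+1)p$, a fact the paper's terser argument uses implicitly when it restricts the double sum and shifts indices by $\kappa p$.
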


\begin{proof}
Since $(\QQ_{p,0,s}^\kappa(u))^c=\cup_{i=0}^sQ_{p,i}^\kappa(u)$ it is clear that
\[
\left|1-\p(\QQ_{p,0,s}^\kappa(u))-s\p(Q_{p,0}^\kappa(u))\right|\leq \sum_{i=0}^s\sum_{j=i+p+1}^s\p(Q_{p,i}^\kappa(u)\cap Q_{p,j}^\kappa(u)).
\]
The result now follows by stationarity plus the two following facts: $Q_{p,j}^\kappa(u)\subset \{X_j>u\}$ and the fact that between two entrances to $Q_{p,0}^\kappa(u)$, at times $i$ and $j$, there must have existed an escape at time $i+\kappa p$, \ie the occurrence of  $Q_{p,i+\kappa p}^0(u)$.
\end{proof}

The next result gives an estimate for difference between occurring less than $\kappa$ exceedances, during a certain time interval, and the event corresponding to no entrances in $U^{(\kappa)}(u)$ during that time frame.

\begin{lemma}
\label{lem:entrances-ball-depth}
For any $p\in\N$, $s,\kappa\in\N\cup\{0\}$ and $u$ sufficiently close to $u_F=\varphi(\zeta)$ we have
\[
\left|\p\big(\nn_{u,0}^{s+1}\leq\kappa\big)-\p(\HH_{p,0,s}^\kappa(u)\big)\right|\leq (s-p)\sum_{j=p+1}^s\p(Q_{p,0}^0(u)\cap \{X_j>u\})+\kappa p \p(U^\kappa)
\] 
\end{lemma}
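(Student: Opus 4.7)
Plan: split the symmetric difference as $|\p(\{\nn_{u,0}^{s+1}\le\kappa\})-\p(\HH_{p,0,s}^\kappa(u))|\le \p(A)+\p(B)$ with $A:=\{\nn_{u,0}^{s+1}\le\kappa\}\setminus\HH_{p,0,s}^\kappa(u)$ and $B:=\HH_{p,0,s}^\kappa(u)\setminus\{\nn_{u,0}^{s+1}\le\kappa\}$; the two summands on the right-hand side of the stated bound will correspond respectively to $\p(A)$ and $\p(B)$.

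For the ``deep-entry'' piece $\p(A)$, assume $X_i\in U^{(\kappa)}(u)$ for some $i\in[0,s-1]$; by the definition of $U^{(\kappa)}(u)$, $X_i,X_{i+p},\ldots,X_{i+\kappa p}$ are $\kappa+1$ distinct exceedances. For $\{\nn_{u,0}^{s+1}\le\kappa\}$ to still hold, at least one of these times must lie outside $[0,s]$, so $i+\kappa p>s$, i.e.\ $i>s-\kappa p$. There are at most $\kappa p$ such $i$, and a union bound with stationarity gives $\p(A)\le \kappa p\,\p(U^{(\kappa)}(u))$.

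For the ``two-cluster'' piece $\p(B)$, the crucial ingredient is that, by condition \ref{item:U-ball}, any two distinct exceedances lie at distance at least $p$. Consider the maximal mod-$p$ runs of consecutive exceedances with at least one visible element in $[0,s]$, ordered by first visible exceedance $f_1<f_2<\cdots$; let $\ell_1$ be the last visible exceedance of the first run $R_1$. Because $\nn_{u,0}^{s+1}>\kappa\ge 1$ forces $f_1\le s-1$, condition $\HH_{p,0,s}^\kappa(u)$ applied at $f_1$ yields that the visible length of every run is at most $\kappa$, so $\nn_{u,0}^{s+1}>\kappa$ implies there are at least two visible runs. The minimum-distance condition from \ref{item:U-ball} places $f_2$ outside a $p$-neighbourhood of every exceedance of $R_1$; if the full run $R_1$ extended past the window $[0,s]$, this would push $f_2$ beyond $s$, contradicting $f_2\le s$. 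Hence $R_1$ terminates within the window, so $X_{\ell_1+p}\le u$ and $Q_{p,\ell_1}^0(u)$ occurs. The escape condition $X_{\ell_1+p}\le u$ then rules out $f_2=\ell_1+p$, giving $f_2-\ell_1\ge p+1$ and, since $f_2\le s$, also $\ell_1\in[0,s-p-1]$. Thus
\[
B\subset \bigcup_{i'=0}^{s-p-1}\bigcup_{j=i'+p+1}^{s}\bigl(Q_{p,i'}^0(u)\cap\{X_j>u\}\bigr),
\]
and applying a union bound with stationarity (so that $\p(Q_{p,i'}^0(u)\cap\{X_j>u\})=\p(Q_{p,0}^0(u)\cap\{X_{j-i'}>u\})$) and collecting by $d=j-i'$ yields $\p(B)\le \sum_{d=p+1}^s(s-d+1)\,\p(Q_{p,0}^0(u)\cap\{X_d>u\})\le (s-p)\sum_{d=p+1}^s\p(Q_{p,0}^0(u)\cap\{X_d>u\})$.

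The step requiring the most care is excluding the case where $R_1$ extends past $s$: this is precisely where the sharp separation $\ge p$ enforced by \ref{item:U-ball} interacts with the maximality of the forbidden zone around $R_1$'s full set of exceedances to contradict the existence of a second visible run with $f_2\le s$. The remaining estimates, including the reduction to $Q_{p,0}^0(u)\cap\{X_d>u\}$ via stationarity, are routine bookkeeping.
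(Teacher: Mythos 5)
Your proof is correct and follows essentially the same route as the paper: the identical split of the symmetric difference into $A=\{\nn_{u,0}^{s+1}\le\kappa\}\setminus\HH_{p,0,s}^\kappa(u)$ (a deep entrance into $U^{(\kappa)}(u)$ forces a cluster of $\kappa+1$ exceedances, which under the count constraint must overrun time $s$, giving the $\kappa p\,\p(U^{(\kappa)}(u))$ term) and $B=\HH_{p,0,s}^\kappa(u)\cap\{\nn_{u,0}^{s+1}>\kappa\}$ (more than $\kappa$ exceedances with no deep entrance forces at least two clusters, hence an escape $Q_{p,i}^0(u)$ followed by a later exceedance, giving the double sum via stationarity). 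Your explicit check that the first run must terminate inside the window before the second one can appear is a point the paper leaves implicit in its appeal to the cluster structure, but it is the same argument, not a different one.
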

\begin{proof}
We start by observing that
\[
A_{0,s}^\kappa(u):=\left\{\nn_{u,0}^{s+1}\leq\kappa\right\}\cap \left(\HH_{p,0,s}^\kappa(u)\right)^c\subset \bigcup_{i=s-\kappa p}^s \{X_i\in U^{(\kappa)}(u)\}= \bigcup_{i=s-\kappa p}^s f^{-i}(U^{(\kappa)}(u)),
\]
since by Remark~\ref{rem:cluster} an entrance in $U^{(\kappa)}(u)$ leads to a cluster of at least $\kappa+1$ exceedances separated by $p$ units of time. This means that the only way an entrance in $U^{(\kappa)}(u)$ can occur and yet the number of exceedances during the time period from $0$ to $s$ is not greater than $\kappa$, is if the entrance in $U^{(\kappa)}(u)$ happens at a time such that the corresponding cluster extends beyond the time  $s$. So by stationarity,
\[
\p\left(A_{0,s}^\kappa(u)\right)\leq \kappa p \p(U^\kappa(u))
\]
Now, we note that
\[
B_{0,s}^\kappa(u):=\left\{\nn_{u,0}^{s+1}>\kappa\right\}\cap \HH_{p,0,s}^\kappa(u)\subset \bigcup_{i=0}^{s-p}\bigcup_{j>i+p}^s Q_{p,i}^0(u)\cap \{X_j\in U^{(0)}(u)\}.
\]
This is because no entrance in $U^{\kappa}(u)$ during the time period $0,\ldots,s$ implies, by Remark~\ref{rem:cluster}, that the maximum cluster size in that period is at most $\kappa$. Hence, in order to count more than $\kappa$ exceedances, there must be at least two distinct clusters during the time period $0,\ldots,s$. Since each cluster ends with an escape, \ie an entrance in $Q_{p,0}^0(u)$, then this must have happened at some moment $i\in\{0,\ldots,s-p\}$ which was then followed by another exceedance at some subsequent instant $j>i$ where a new cluster is begun. Consequently, by stationarity, we have
\[
\p\left(B_{0,s}^\kappa(u)\right)\leq (s-p)\sum_{j=p+1}^s\p\left(Q_{p,0}^0(u)\cap \{X_j>u\}\right).
\]
The result follows now at once since
\begin{align*}
\left|\p\left(\nn_{u,0}^{s+1}\leq\kappa\big)-\p(\HH_{p,0,s}^\kappa(u)\right)\right|&\leq\p\left(\left\{\nn_{u,0}^{s+1}\leq\kappa\right\}\triangle \HH_{p,0,s}^\kappa(u)\right)
=\p(A_{0,s}^\kappa(u))+\p(B_{0,s}^\kappa(u)).
\end{align*}    
\end{proof}

The lemmas above pave the way for the proof of the next five results, which will then enable us to prove the convergence the Laplace transforms of our point processes to that of a compound Poisson distribution.

\begin{corollaryP}
\label{cor:k-ring}
Assuming that $\varphi$ achieves a global maximum at the repelling periodic point $\zeta$, of prime period $p$, and conditions \ref{item:U-ball} and \ref{item:repeller} hold, there exists $C>0$ depending only on $\theta$ given by property \ref{item:repeller} such that for any $s,\kappa\in\N$ and $u$ sufficiently close to $u_F=\varphi(\zeta)$ we have for $\kappa>0$
$$
\left|\p\big(\nn_{u,0}^{s+1}=\kappa\big)-s\left(\p(Q_{p,0}^{\kappa-1}(u))-\p(Q_{p,0}^\kappa(u))\right)\right|\leq 4s\sum_{j=p+1}^s\p(Q_{p,0}^0(u)\cap \{X_j>u\})+2C\,\p(X_0>u_n),
$$
and in the case $\kappa=0$
$$
\left|\p\big(\nn_{u,0}^{s+1}=0\big)-\left(1-s\p(Q_{p,0}^0(u))\right)\right|\leq 2s\sum_{j=p+1}^s\p(Q_{p,0}^0(u)\cap \{X_j>u\})+C\,\p(X_0>u).
$$
\end{corollaryP}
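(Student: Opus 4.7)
The plan is to chain the three preceding lemmas (Lemma~\ref{Lem:disc-ring}, Lemma~\ref{lem:no-entrances-ring} and Lemma~\ref{lem:entrances-ball-depth}) and then use property \ref{item:repeller} to absorb the error terms that depend on the depth $\kappa$ into a single quantity proportional to $\p(X_0>u)$.

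I would handle the case $\kappa=0$ first, since $\{\nn_{u,0}^{s+1}=0\}=\{\nn_{u,0}^{s+1}\le0\}$. Applying Lemma~\ref{lem:entrances-ball-depth} with $\kappa=0$ (where the $\kappa p\,\p(U^{(\kappa)}(u))$ term vanishes), then Lemma~\ref{Lem:disc-ring} with $\kappa=0$, and finally Lemma~\ref{lem:no-entrances-ring} with $\kappa=0$, the triangle inequality yields
\[
\left|\p(\nn_{u,0}^{s+1}=0)-\bigl(1-s\p(Q_{p,0}^0(u))\bigr)\right|\le 2s\sum_{j=p+1}^s\p(Q_{p,0}^0(u)\cap\{X_j>u\})+p\sum_{i=1}^\infty\p(U^{(i)}(u)).
\]
By \ref{item:repeller}, $\p(U^{(i)}(u))\asymp(1-\theta)^i\p(X_0>u)$, so the tail $\sum_{i=1}^\infty\p(U^{(i)}(u))$ is bounded by $C\p(X_0>u)$ for a constant $C$ depending only on $\theta$ (and $p$, which we treat as fixed). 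This gives the $\kappa=0$ bound.

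For $\kappa>0$, I would write
\[
\p\bigl(\nn_{u,0}^{s+1}=\kappa\bigr)=\p\bigl(\nn_{u,0}^{s+1}\le\kappa\bigr)-\p\bigl(\nn_{u,0}^{s+1}\le\kappa-1\bigr),
\]
and approximate each of the two terms on the right by $1-s\p(Q_{p,0}^\kappa(u))$ and $1-s\p(Q_{p,0}^{\kappa-1}(u))$ respectively, using the same chain of three lemmas. The difference between these two approximations is exactly $s(\p(Q_{p,0}^{\kappa-1}(u))-\p(Q_{p,0}^\kappa(u)))$, matching the statement. Each approximation contributes an error of
\[
(s-p)\sum_{j=p+1}^s\p(Q_{p,0}^0(u)\cap\{X_j>u\})+\kappa p\,\p(U^{(\kappa)}(u))+p\sum_{i=\kappa+1}^\infty\p(U^{(i)}(u))+s\sum_{j=p+1}^s\p(Q_{p,0}^0(u)\cap\{X_j>u\}),
\]
so the triangle inequality produces a total of at most $4s\sum_{j=p+1}^s\p(Q_{p,0}^0(u)\cap\{X_j>u\})$ plus the $\kappa$-dependent terms.

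The only delicate point is to make the $\kappa$-dependent error terms uniform in $\kappa$, which is where property \ref{item:repeller} does the work. From $\p(U^{(\kappa)}(u))\asymp(1-\theta)^\kappa\p(X_0>u)$ one has, on the one hand, $\sum_{i=\kappa+1}^\infty\p(U^{(i)}(u))\le C(1-\theta)^{\kappa+1}\p(X_0>u)/\theta\le C'\p(X_0>u)$, and on the other hand $\kappa\,\p(U^{(\kappa)}(u))\le C\kappa(1-\theta)^\kappa\p(X_0>u)$, and the function $\kappa\mapsto\kappa(1-\theta)^\kappa$ attains a finite maximum on $\N$ that depends only on $\theta$. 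Combining these two uniform bounds absorbs both $\kappa$-terms into a single $C\,\p(X_0>u)$ contribution per approximation, and summing the two approximations gives the asserted $2C\,\p(X_0>u)$ constant term. The main (if mild) obstacle is precisely this uniform control in $\kappa$: without the geometric decay from \ref{item:repeller} the errors could accumulate, but the exponential contraction makes them harmless.
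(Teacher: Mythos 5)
Your proposal is correct and follows essentially the same route as the paper: chain Lemmas~\ref{lem:entrances-ball-depth}, \ref{Lem:disc-ring} and \ref{lem:no-entrances-ring} via the triangle inequality, write $\p(\nn_{u,0}^{s+1}=\kappa)=\p(\nn_{u,0}^{s+1}\le\kappa)-\p(\nn_{u,0}^{s+1}\le\kappa-1)$ for $\kappa>0$, and use the geometric decay $\p(U^{(\kappa)}(u))\sim(1-\theta)^\kappa\p(X_0>u)$ from \ref{item:repeller} to bound the $\kappa$-dependent terms uniformly. Your explicit remark that $\kappa(1-\theta)^\kappa$ is bounded uniformly in $\kappa$ simply makes transparent the uniformity that the paper's proof uses implicitly.
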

\begin{proof}
Using Lemmas~\ref{Lem:disc-ring}-\ref{lem:entrances-ball-depth}, recalling that by assumption \ref{item:repeller} about the repelling periodic point, we have $1-\theta<1$ and that for every non-negative integer $\kappa$, $\p(U^{\kappa}(u))\sim(1-\theta)^\kappa \p(U^{(0)}(u))$, it follows that there exists a constant $C>0$ such that for every $\kappa\in\N_0$ we have
\begin{align*}
\big|\p\big(\nn_{u,0}^{s+1}\leq\kappa\big)-&(1-s\p(Q_{p,0}^\kappa(u)))\big|\leq \left|\p\big(\nn_{u,0}^{s+1}\leq\kappa\big)-\p(\HH_{p,0,s}^\kappa(u)\big)\right|\\
& \quad + \left|\p\big(\HH_{p,0,s}^\kappa(u)\big)-\p\big(\QQ_{p,0,s}^\kappa(u)\big)\right|+\left|\p\big(\QQ_{p,0,s}^\kappa(u)\big)-\big(1-s\p(Q_{p,0}^\kappa(u))\big)\right|\\
&\leq2(s-p)\sum_{j=p+1}^s\p(Q_{p,0}^0(u)\cap \{X_j>u\})+\kappa p \p(U^{\kappa})+p\sum_{i=\kappa+1}^{\infty} \p(U^{(i)}(u))\\
& \leq 2s\sum_{j=p+1}^s\p(Q_{p,0}^0(u)\cap \{X_j>u\})+C\,\p(X_0>u).
\end{align*}
Since $\{\nn_{u,0}^{s+1}\leq0\}=\{\nn_{u,0}^{s+1}=0\}$, the result is clear for $\kappa=0$. The case $\kappa\in\N$ follows easily after observing that
$\p\big(\nn_{u,0}^{s+1}=\kappa\big)=\p\big(\nn_{u,0}^{s+1}\leq\kappa\big)-\p\big(\nn_{u,0}^{s+1}\leq\kappa-1\big)$ which implies
\begin{align*}
\left|\p\big(\nn_{u,0}^{s+1}=\kappa\big)-s\left(\p(Q_{p,0}^{\kappa-1}(u))-\p(Q_{p,0}^\kappa(u))\right)\right|&\leq \big|\p\big(\nn_{u,0}^{s+1}\leq\kappa\big)-(1-s\p(Q_{p,0}^\kappa(u))\big|\\&\quad+\big|\p\big(\nn_{u,0}^{s+1}\leq\kappa-1\big)-(1-s\p(Q_{p,0}^{\kappa-1}(u))\big|.
\end{align*} 
\end{proof}

\begin{corollaryP}
\label{cor:exponential}
 Assuming that $\varphi$ achieves a global maximum at the repelling periodic point $\zeta$, of prime period $p$, and conditions \ref{item:U-ball}  and \ref{item:repeller} hold, there exists $C>0$ depending only on $\theta$ given by property \ref{item:repeller} such that for any $s\in\N$, $y\geq0$ and $u$ sufficiently close to $u_F=\varphi(\zeta)$ we have 
\begin{align*}
\Big|\E\left(\e^{-y\nn_{u,0}^{s+1}}\right)-&\big(1-s\p(Q_{p,0}^0(u)\big)\big)-\sum_{\kappa=1}^{\lfloor s/p\rfloor}\e^{-y\kappa}s\left(\p(Q_{p,0}^{\kappa-1}(u))-\p(Q_{p,0}^\kappa(u))\right)\Big| \\
&\hspace{3cm} \leq C\left(s\sum_{j=p+1}^s\p(Q_{p,0}^0(u)\cap \{X_j>u\})+\p(X_0>u)\right).
\end{align*}
\end{corollaryP}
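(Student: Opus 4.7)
The plan is to compute $\E(e^{-y\nn_{u,0}^{s+1}})$ directly by summing over the possible values of $\nn_{u,0}^{s+1}$ and compare the result with the stated target sum $T$. To make this comparison tractable without blowing up the error term, I would pass through the cumulative distribution $\p(\nn_{u,0}^{s+1}\le \kappa)$ via Abel summation, so that the per-term bound from Corollary~\ref{cor:k-ring} (and more specifically from the sharper estimate obtained inside its proof) appears only with weights that form a convex combination.

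First I would split
\[
\E(e^{-y\nn_{u,0}^{s+1}})=\sum_{\kappa=0}^{K} e^{-y\kappa}\p(\nn_{u,0}^{s+1}=\kappa)+\sum_{\kappa=K+1}^{s+1}e^{-y\kappa}\p(\nn_{u,0}^{s+1}=\kappa),
\]
with $K=\lfloor s/p\rfloor$. The second (tail) sum is controlled by Markov's inequality: since $\E(\nn_{u,0}^{s+1})=(s+1)\p(X_0>u)$, this tail is at most $\p(\nn_{u,0}^{s+1}>K)\le (s+1)\p(X_0>u)/(K+1)$, which is $O(\p(X_0>u))$ because $K+1\ge s/p$. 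This handles the high-multiplicity contribution without loss.

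For the main part, I would rewrite both the partial sum and the target in terms of $\p(\nn_{u,0}^{s+1}\le\kappa)$ and $1-s\p(Q_{p,0}^\kappa(u))$ respectively, using Abel summation, obtaining
\[
\sum_{\kappa=0}^K e^{-y\kappa}\p(\nn_{u,0}^{s+1}=\kappa)=(1-e^{-y})\sum_{\kappa=0}^{K-1}e^{-y\kappa}\p(\nn_{u,0}^{s+1}\le\kappa)+e^{-yK}\p(\nn_{u,0}^{s+1}\le K),
\]
and an identical expression for $T$ with $\p(\nn_{u,0}^{s+1}\le\kappa)$ replaced by $1-s\p(Q_{p,0}^\kappa(u))$. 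The resulting weights $(1-e^{-y})e^{-y\kappa}$ and $e^{-yK}$ are nonnegative and, as a telescoping calculation shows, sum to $1$. Thus the difference is a convex combination of the errors $\bigl|\p(\nn_{u,0}^{s+1}\le\kappa)-(1-s\p(Q_{p,0}^\kappa(u)))\bigr|$ for $0\le\kappa\le K$.

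The sharp version of Corollary~\ref{cor:k-ring}, obtained by combining Lemmas~\ref{Lem:disc-ring}, \ref{lem:no-entrances-ring} and \ref{lem:entrances-ball-depth}, bounds this error by
\[
2s\sum_{j=p+1}^s\p\bigl(Q_{p,0}^0(u)\cap\{X_j>u\}\bigr)+p\kappa\,\p(U^{(\kappa)}(u))+p\sum_{i=\kappa+1}^\infty\p(U^{(i)}(u)).
\]
By property \ref{item:repeller}, $\p(U^{(\kappa)}(u))\le C(1-\theta)^\kappa\p(X_0>u)$ for all $u$ sufficiently close to $u_F$, so the last two summands are dominated by $C(\kappa+1)(1-\theta)^\kappa\p(X_0>u)$. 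When averaged against weights that sum to $1$ and each lie in $[0,1]$, the first, $\kappa$-independent summand contributes just $2s\sum_{j=p+1}^s\p(Q_{p,0}^0(u)\cap\{X_j>u\})$, while the $\kappa$-dependent summand contributes at most $\sum_{\kappa\ge 0}C(\kappa+1)(1-\theta)^\kappa\p(X_0>u)=C'\p(X_0>u)$. Adding the tail estimate from the first paragraph yields the claimed bound.

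The main obstacle is precisely that the per-$\kappa$ bound from Corollary~\ref{cor:k-ring} carries the factor $s\sum_j\p(Q_{p,0}^0(u)\cap\{X_j>u\})$, so a naive triangle-inequality sum over the $K\sim s/p$ relevant values of $\kappa$ would multiply this by an extra $s/p$ and destroy the estimate. The Abel summation step is what avoids this by converting the sum into a convex combination of cumulative errors; its effectiveness relies on the geometric decay of $\p(U^{(\kappa)}(u))$ coming from the repelling character of $\zeta$, without which the $\kappa$-dependent part of the error would not be summable.
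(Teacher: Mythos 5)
Your proof is correct, and its aggregation step differs from the paper's own (very terse) argument in a way that actually delivers the statement as written. The paper simply expands $\E\big(\e^{-y\nn_{u,0}^{s+1}}\big)=\sum_{\kappa=0}^{\lfloor s/p\rfloor}\e^{-y\kappa}\p\big(\nn_{u,0}^{s+1}=\kappa\big)$ and applies Corollary~\ref{cor:k-ring} term by term, invoking the summability of $\sum_{\kappa}\e^{-y\kappa}$ for $y>0$ (with the case $y=0$ handled by the telescoping of the target sum); read literally, this puts a factor of order $(1-\e^{-y})^{-1}$ in front of the term $s\sum_{j=p+1}^s\p\big(Q_{p,0}^0(u)\cap\{X_j>u\}\big)$, so the constant it produces depends on $y$ — harmless in the later applications, where $y$ is fixed, but not the $y$-uniform constant depending only on $\theta$ that the statement claims. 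Your route instead Abel-sums both the truncated Laplace sum and the target into weighted combinations of the cumulative quantities $\p\big(\nn_{u,0}^{s+1}\le\kappa\big)$ and $1-s\p\big(Q_{p,0}^\kappa(u)\big)$, with nonnegative weights summing to one, and then feeds in the intermediate estimate inside the proof of Corollary~\ref{cor:k-ring} (i.e.\ the combination of Lemmas~\ref{Lem:disc-ring}, \ref{lem:no-entrances-ring} and \ref{lem:entrances-ball-depth}); the $\kappa$-independent error then enters with total weight one, while the $\kappa$-dependent part is summable thanks to the geometric decay from \ref{item:repeller}, so the resulting constant depends only on $\theta$ (and $p$), uniformly in $y\ge 0$. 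The only redundancy is your Markov-inequality tail estimate: by \ref{item:U-ball} exceedances are at least $p$ apart, so $\nn_{u,0}^{s+1}\le\lfloor s/p\rfloor+1$ and the tail is essentially void; your bound is nevertheless valid and even absorbs the paper's small off-by-one in the range of $\kappa$, so nothing breaks.
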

\begin{proof}
Since, up to time $s$ there can be at most $\lfloor s/p\rfloor$ exceedances we have
\[
\E\left(\e^{-y\nn_{u,0}^{s+1}}\right)=\sum_{\kappa=0}^{\lfloor s/p\rfloor} \e^{-y\kappa}\p(\nn_{u,0}^{s+1}=\kappa),
\]
and the result now follows from Corollary~\ref{cor:k-ring} and the fact that $\sum_{\kappa=0}^\infty e^{-y\kappa}<\infty$, for every $y>0$ and from the fact that, for all $\kappa$, we have $e^{-y\kappa}= 1$ for $y= 0$.
\end{proof}

\begin{propositionP}
\label{prop:main-step}
Assume that $\varphi$ achieves a global maximum at the repelling periodic point $\zeta$, of prime period $p$, and conditions \ref{item:U-ball} and \ref{item:repeller} hold. Let $s, t, \varsigma\in\N$ and consider $\kappa_1\in\N_0$, $\underline{\kappa}=(\kappa_2,\ldots,\kappa_\varsigma)\in\N_0^{\varsigma-1}$, $s+t<a_2<b_2<a_3<\ldots<b_\varsigma\in\N_0$. For $u$ sufficiently close to $u_F=\varphi(\zeta)$ we have
\begin{align*}
\big|\p(\nn_{u,0}^{s+1}=\kappa_1, \nn_{u,a_2}^{b_2}&=\kappa_2, \ldots, \nn_{u,a_\varsigma}^{b_\varsigma}=\kappa_\varsigma)-\p(\nn_{u,0}^{s+1}=\kappa_1)\p( \nn_{u,a_2}^{b_2}=\kappa_2, \ldots, \nn_{u,a_\varsigma}^{b_\varsigma}=\kappa_\varsigma)\big|\\
&\hspace{1.4cm} \leq C \left( s\,\iota(u,t)+s\sum_{j=p+1}^s\p(Q_{p,0}^0(u)\cap \{X_j>u\})+\p(U^{(0)}(u))\right).
\end{align*}
for some $C>0$ depending only on on $\theta$ given by property \ref{item:repeller} and where \begin{equation}
\label{eq:def-iota}
\iota(u,t)=\sup_{s\in \N}\max_{i=0,\ldots, s}\left\{\left|\p(Q_{p,i}^{\kappa_1})\p\big(\cap_{j=2}^\varsigma \{\nn_{u,a_j}^{b_j}=\kappa_j\}\big)-\p\big( \cap_{j=2}^\varsigma \{\nn_{u,a_j}^{b_j}=\kappa_j\}\cap Q_{p,i}^{\kappa_1}\big)\right|\right\}.
\end{equation}

\end{propositionP}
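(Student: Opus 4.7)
Let $B:=\bigcap_{j=2}^\varsigma\{\nn_{u,a_j}^{b_j}=\kappa_j\}$. The plan is to show that $\{\nn_{u,0}^{s+1}=\kappa_1\}$ decorrelates from $B$ up to the stated error. Since $B$ is measurable with respect to $X_i$, $i\ge a_2\ge s+t$, each $\p(Q_{p,i}^{\kappa}(u)\cap B)$ with $0\le i\le s$ is precisely the kind of joint probability controlled by $\iota(u,t)$. The strategy is to express $\{\nn_{u,0}^{s+1}=\kappa_1\}$ as a signed combination of such simple annulus events at a small cost, apply $\iota$ termwise, and then recombine.

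The first step is to ``relativise'' Lemmas~\ref{Lem:disc-ring}, \ref{lem:no-entrances-ring} and \ref{lem:entrances-ball-depth} to the event $B$. Each of those lemmas is proved by a pure set-theoretic inclusion (for instance $\QQ_{p,0,s}^\kappa\setminus\HH_{p,0,s}^\kappa\subset\bigcup_i\{X_i\in U^{(\kappa+\ell_i+1)}\}$) followed by monotonicity of $\p$ and stationarity, so intersecting every set appearing in those proofs with $B$ yields verbatim the same bounds with $\p(\cdot)$ replaced by $\p(\cdot\cap B)$ on the left, while the right-hand sides---which involve only $\p(U^{(i)}(u))$ and $\p(Q_{p,0}^0(u)\cap\{X_j>u\})$---are unchanged. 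Chaining these three relativised estimates exactly as in the proof of Corollary~\ref{cor:k-ring} but without collapsing $\sum_{i=0}^s\p(Q_{p,i}^\kappa(u)\cap B)$ into $s\,\p(Q_{p,0}^\kappa(u))\p(B)$, one obtains, for every $\kappa\in\N_0$, an approximation of $\p(\{\nn_{u,0}^{s+1}\le\kappa\}\cap B)$ by $\p(B)-\sum_{i=0}^s\p(Q_{p,i}^\kappa(u)\cap B)$, with error dominated by $2s\sum_{j=p+1}^s\p(Q_{p,0}^0(u)\cap\{X_j>u\})+C\p(U^{(0)}(u))$, the last term absorbing the geometric tail $p\sum_{i>\kappa}\p(U^{(i)}(u))$ via property \ref{item:repeller}.

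Next I bring in the mixing: by the very definition of $\iota(u,t)$, each $i\in\{0,\dots,s\}$ satisfies $|\p(Q_{p,i}^\kappa(u)\cap B)-\p(Q_{p,i}^\kappa(u))\p(B)|\le\iota(u,t)$; summing and using stationarity $\p(Q_{p,i}^\kappa(u))=\p(Q_{p,0}^\kappa(u))$ lets me replace $\sum_{i=0}^s\p(Q_{p,i}^\kappa(u)\cap B)$ by $s\,\p(Q_{p,0}^\kappa(u))\p(B)$ with total error at most $(s+1)\iota(u,t)+\p(U^{(0)}(u))$. Plugging this into the previous display and applying the $B$-free Corollary~\ref{cor:k-ring} to rewrite $\p(B)-s\,\p(Q_{p,0}^\kappa(u))\p(B)$ as $\p(\nn_{u,0}^{s+1}\le\kappa)\p(B)$ yields the decorrelation bound for the event $\{\nn_{u,0}^{s+1}\le\kappa\}$ against $B$. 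The proposition then follows by writing $\{\nn_{u,0}^{s+1}=\kappa_1\}=\{\nn_{u,0}^{s+1}\le\kappa_1\}\setminus\{\nn_{u,0}^{s+1}\le\kappa_1-1\}$ (the latter empty when $\kappa_1=0$), applying this estimate with $\kappa=\kappa_1$ and $\kappa=\kappa_1-1$, and using the triangle inequality, absorbing numerical constants into a single $C=C(\theta)$.

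The main obstacle is essentially bookkeeping rather than conceptual. One has to confirm that each of the three preparatory lemmas genuinely carries over to its $B$-intersected form with \emph{unchanged} right-hand side---which it does because every intermediate bound in those proofs is a monotonicity estimate applied to a set inclusion that does not involve $B$---and that the mixing factor $\iota(u,t)$ is invoked only on the geometrically simple annulus events $Q_{p,i}^\kappa$ and never on the complicated count events $\{\nn_{u,0}^{s+1}=\kappa_1\}$. This second point is precisely why the weak condition $D_p(u_n)^*$, which is tailored to annulus events, is enough here---handling the count events directly would require a Leadbetter-type condition like $D(u_n)$ or $\Delta(u_n)$.
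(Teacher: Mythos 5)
Your proposal is correct and follows essentially the same route as the paper: the paper likewise replaces the counting event by the no-entrance-in-$Q_{p,0}^{\kappa_1}$ event using Lemmas~\ref{Lem:disc-ring} and \ref{lem:entrances-ball-depth} (with the target event $D^{\underline\kappa}$ present, which only shrinks the symmetric-difference errors), adapts Lemma~\ref{lem:no-entrances-ring} to expand the joint probability as $\p(D^{\underline\kappa})-\sum_i\p(Q_{p,i}^{\kappa_1}\cap D^{\underline\kappa})$, applies $\iota(u,t)$ termwise only to the annulus events, and finishes by differencing $\le\kappa_1$ and $\le\kappa_1-1$ and absorbing the geometric tails from \ref{item:repeller} into $C\,\p(U^{(0)}(u))$. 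The only difference is cosmetic bookkeeping (you recombine via the unconditioned Corollary~\ref{cor:k-ring} where the paper reuses Lemma~\ref{lem:no-entrances-ring} directly), so no further comment is needed.
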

\begin{proof}
Let 
\begin{align*}
A_{\kappa_1,\underline{\kappa}}&:=\{\nn_{u,0}^{s+1}\leq\kappa_1, \nn_{u,a_2}^{b_2}=\kappa_2, \ldots, \nn_{u,a_\varsigma}^{b_\varsigma}=\kappa_\varsigma\},\\
\tilde A_{\kappa_1,\underline{\kappa}} &:=\QQ_{p,0,s}^{\kappa_1}(u)\cap\{ \nn_{u,a_2}^{b_2}=\kappa_2, \ldots, \nn_{u,a_\varsigma}^{b_\varsigma}=\kappa_\varsigma\},\\
A^*_{\kappa_1,\underline{\kappa}}&:=\{\nn_{u,0}^{s+1}=\kappa_1, \nn_{u,a_2}^{b_2}=\kappa_2, \ldots, \nn_{u,a_\varsigma}^{b_\varsigma}=\kappa_\varsigma\},\\
B_{\kappa_1}&:=\{\nn_{u,0}^{s+1}\leq\kappa_1\}, \quad \tilde B_{\kappa_1}:=\QQ_{p,0,s}^{\kappa_1}(u),  \quad B^*_{\kappa_1}:=\{\nn_{u,0}^{s+1}=\kappa_1\},\\
D^{\underline{\kappa}}&:=\{\nn_{u,\ell}^{b_2}=\kappa_2, \ldots, \nn_{u,a_\varsigma}^{b_\varsigma}=\kappa_\varsigma\}.
\end{align*}

By Lemmas~\ref{Lem:disc-ring} and \ref{lem:entrances-ball-depth} we have
\begin{align}
\label{eq:approx1}
\left|\p(A_{\kappa_1,\underline{\kappa}})-\p(\tilde A_{\kappa_1,\underline{\kappa}})\right|&\leq \left|\p(B_{\kappa_1})-\p(\tilde B_{\kappa_1})\right|\nonumber\\
&\leq \left|\p(\nn_{u,0}^{s+1}\leq\kappa_1) -\p(\HH_{p,0,s}^{\kappa_1}(u))\right|+\left|\p(\HH_{p,0,s}^{\kappa_1}(u))-\p(\QQ_{p,0,s}^{\kappa_1}(u))\right|\nonumber\\
&\leq s\sum_{j=p+1}^s\p(Q_{p,0}^0(u)\cap \{X_j>u\})+\kappa_1 p \p(U^{\kappa_1})+p\sum_{i=\kappa_1+1}^{\infty} \p(U^{(i)}(u))
\end{align}
Using stationarity and adapting the proof of Lemma~\ref{lem:no-entrances-ring},
 it follows that $\left|\p(\tilde A_{\kappa_1,\underline{\kappa}})-(1-s\p(Q_{p,0}^{\kappa_1}))\p(D^{\underline \kappa})\right|\leq Err,$ where
$$Err=  \left|s\p(Q_{p,0}^{\kappa_1})\p(D^{\underline{\kappa}})-\sum_{i=0}^s\p(Q_{p,i}^{\kappa_1}\cap D^{\underline{\kappa}})\right|+s\sum_{j=p+1}^s\p(Q_{p,0}^0(u)\cap \{X_j>u\}).$$

Now, since, by definition of $\iota(u,t)$, $$ \left|s\p(Q_{p,0}^{\kappa_1})\p(D^{\underline \kappa})-\sum_{i=0}^s\p(Q_{p,i}^{\kappa_1}\cap D^{\underline \kappa})\right|=\left|\sum_{i=0}^s\p(Q_{p,i}^{\kappa_1})\p(D^{\underline \kappa})-\p(Q_{p,i}^{\kappa_1}\cap D^{\underline \kappa})\right|\leq s\iota(u,t),$$
we conclude that 
\begin{equation}
\label{eq:approx2}
\left|\p(\tilde A_{\kappa_1,\underline{\kappa}})-(1-s\p(Q_{p,0}^{\kappa_1}))\p(D^{\underline \kappa})\right|\leq s\iota(u,t)+s\sum_{j=p+1}^s\p(Q_{p,0}^0(u)\cap \{X_j>u\}). 
\end{equation}
Also, by Lemma~\ref{lem:no-entrances-ring} we have
\begin{equation}
\label{eq:approx3}
\left|\p(\tilde B_{\kappa_1})\p(D^{\underline \kappa})-(1-s\p(Q_{p,0}^{\kappa_1}))\p(D^{\underline \kappa})\right|\leq s\sum_{j=p+1}^s\p(Q_{p,0}^0(u)\cap \{X_j>u\}).
\end{equation}
Putting together the estimates \eqref{eq:approx1},\eqref{eq:approx2} and \eqref{eq:approx3} we get
\begin{align*}
|\p( & A_{\kappa_1,\underline{\kappa}})-\p( B_{\kappa_1})\p(D^{\underline \kappa})|\leq \left|\p( A_{\kappa_1,\underline{\kappa}})-\p( \tilde A_{\kappa_1,\underline{\kappa}})\right|+\left|\p(\tilde A_{\kappa_1,\underline{\kappa}})-(1-s\p(Q_{p,0}^{\kappa_1}))\p(D^{\underline \kappa})\right|\\
&\hspace{2cm}+\left|\p(\tilde B_{\kappa_1})\p(D^{\underline \kappa})-(1-s\p(Q_{p,0}^{\kappa_1}))\p(D^{\underline \kappa})\right|+\left|\p(B_{\kappa_1})-\p(\tilde B_{\kappa_1})\right|\p(D^{\underline \kappa})\\
&\leq s\iota(u,t)+4s\sum_{j=p+1}^s\p(Q_{p,0}^0(u)\cap \{X_j>u\})+2\kappa_1 p \p(U^{(\kappa_1)}(u))+2p\sum_{i=\kappa_1+1}^{\infty} \p(U^{(i)}(u))
\end{align*}
Since $\p(A^*_{\kappa_1,\underline{\kappa}})=\p(A_{\kappa_1,\underline{\kappa}})-\p(A_{\kappa_1-1,\underline{\kappa}})$ and $\p(B^*_{\kappa_1})=\p(B_{\kappa_1})-\p(B_{\kappa_1-1})$, we have
\begin{align*}
|\p( & A^*_{\kappa_1,\underline{\kappa}})-\p( B^*_{\kappa_1})\p(D^{\underline \kappa})|\leq |\p( A_{\kappa_1,\underline{\kappa}})-\p( B_{\kappa_1})\p(D^{\underline \kappa})|+|\p( A_{\kappa_1-1,\underline{\kappa}})-\p( B_{\kappa_1-1})\p(D^{\underline \kappa})|\\
&\quad \leq 2s\iota(u,t)+8s\sum_{j=p+1}^s\p(Q_{p,0}^0(u)\cap \{X_j>u\})+4\kappa_1 p \p(U^{(\kappa_1-1)}(u))+4p\sum_{i=\kappa_1}^{\infty} \p(U^{(i)}(u))
\end{align*}
Recalling that by assumption \ref{item:repeller} about the repelling periodic point, we have $1-\theta<1$ and, for every non-negative integer $\kappa$, $\p(U^{\kappa}(u))\sim(1-\theta)^\kappa \p(U^{(0)}(u))$, then there exists a constant $C>0$ such that for all $\kappa_1,\underline{\kappa}$ we have
$$|\p( A^*_{\kappa_1,\underline{\kappa}})-\p( B^*_{\kappa_1})\p(D^{\underline \kappa})|\leq C\left( s\iota(u,t)+s\sum_{j=p+1}^s\p(Q_{p,0}^0(u)\cap \{X_j>u\})+\p(U^{(0)}(u))\right),$$ 
which is sufficient for the proposition.
\end{proof}

\begin{corollaryP}
\label{cor:fgm-main-estimate}
Assume that $\varphi$ achieves a global maximum at the repelling periodic point $\zeta$, of prime period $p$, and conditions \ref{item:U-ball} and \ref{item:repeller} hold. Let $s, t, \varsigma\in\N$ and consider $y_1,y_2,\ldots,y_\varsigma\in\R_0^+,$ $s+t<a_2<b_2<a_3<\ldots<b_\varsigma\in\N_0$. For $u$ sufficiently close to $u_F=\varphi(\zeta)$ we have
\[
\E\left(\e^{-y_1\nn_{u,0}^{s+1}- y_2\nn_{u,a_2}^{b_2}-\ldots-y_\varsigma\nn_{u,a_\varsigma}^{b_\varsigma}}\right)=\E\left(\e^{-y_1\nn_{u,0}^{s+1}}\right)\E\left(\e^{- y_2\nn_{u,a_2}^{b_2}-\ldots-y_\varsigma\nn_{u,a_\varsigma}^{b_\varsigma}}\right)+ Err,
\]
where
$
|Err|\leq C\left(s\iota(u,t)+s\sum_{j=p+1}^s\p(Q_{p,0}^0(u)\cap \{X_j>u\})+\p(U^{(0)}(u))\right),
$ 
for some $C>0$ depending only on $\theta$ given by property \ref{item:repeller} and where $\iota(u,t)$ is given by \eqref{eq:def-iota}
\end{corollaryP}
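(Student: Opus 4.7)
The plan is to reduce the claim to a weighted version of Proposition~\ref{prop:main-step}, using an Abel-summation rearrangement that avoids picking up an extra factor of $\lfloor s/p\rfloor+1$ when summing over $\kappa_1$. Write $g := \e^{-y_2\nn_{u,a_2}^{b_2}-\cdots-y_\varsigma\nn_{u,a_\varsigma}^{b_\varsigma}}$, observing that $|g|\le 1$, and set $F_{\leq\kappa} := \{\nn_{u,0}^{s+1}\leq\kappa\}$. The periodicity of $\zeta$ (\ref{item:U-ball}) forces consecutive exceedances to be at least $p$ apart, so $\nn_{u,0}^{s+1}\leq K:=\lfloor s/p\rfloor$ and $F_{\leq K}$ is the whole space. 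Expanding $\e^{-y_1\nn_{u,0}^{s+1}}=\sum_{\kappa=0}^{K}\e^{-y_1\kappa}\I_{\{\nn_{u,0}^{s+1}=\kappa\}}$ (and the analogous expansion for the marginal), using $\I_{\{\nn=\kappa\}}=\I_{F_{\leq\kappa}}-\I_{F_{\leq\kappa-1}}$, and rearranging by Abel summation, the boundary terms at $\kappa=K$ cancel, giving
\begin{equation*}
\mathrm{Err}=(1-\e^{-y_1})\sum_{\kappa=0}^{K-1}\e^{-y_1\kappa}\,\Delta_\kappa,\qquad \Delta_\kappa:=\E[\I_{F_{\leq\kappa}}g]-\p(F_{\leq\kappa})\E[g].
\end{equation*}

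I then bound $|\Delta_\kappa|$ uniformly in $\kappa$ by imitating the proof of Proposition~\ref{prop:main-step} with $g$ in place of $\I_{D^{\underline\kappa}}$. Because $|g|\le 1$, the $g$-integrated analogues of Lemmas~\ref{Lem:disc-ring}, \ref{lem:no-entrances-ring} and \ref{lem:entrances-ball-depth} yield the same error estimates as the probability versions, so $\E[\I_{F_{\leq\kappa}}g]$ may be replaced by $\E[g]-\sum_{i=0}^{s}\E[\I_{Q_{p,i}^{\kappa}(u)}g]$ and $\p(F_{\leq\kappa})$ by $1-s\p(Q_{p,0}^{\kappa}(u))$, each up to an additive error of order $\kappa p\p(U^{(\kappa)}(u))+p\sum_{i>\kappa}\p(U^{(i)}(u))+s\sum_{j=p+1}^{s}\p(Q_{p,0}^{0}(u)\cap\{X_j>u\})$. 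The crucial mixing input is
\begin{equation*}
\big|\E[\I_{Q_{p,i}^{\kappa}(u)}g]-\p(Q_{p,i}^{\kappa}(u))\E[g]\big|\le \iota(u,t),
\end{equation*}
obtained by expanding $g$ as a series in $\underline\kappa$ and invoking \eqref{eq:def-iota}, where $\iota(u,t)$ is interpreted as a supremum over all admissible $\kappa$ and $\underline\kappa$; summing $i=0,\dots,s$ then contributes $s\iota(u,t)$. Assembling the pieces,
\begin{equation*}
|\Delta_\kappa|\le C\!\left(s\iota(u,t)+s\!\!\!\sum_{j=p+1}^{s}\!\!\p(Q_{p,0}^{0}(u)\cap\{X_j>u\})+\kappa p\p(U^{(\kappa)}(u))+p\!\!\sum_{i>\kappa}\!\p(U^{(i)}(u))\right).
\end{equation*}

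Finally, I substitute this bound into the Abel formula. The weight satisfies $(1-\e^{-y_1})\sum_{\kappa\ge 0}\e^{-y_1\kappa}\le 1$ for every $y_1\ge 0$, so the $s\iota(u,t)$ and $s\sum_{j}\p(Q_{p,0}^{0}(u)\cap\{X_j>u\})$ contributions pass through at their original size rather than picking up an extra factor of $K$. For the two $\kappa$-dependent tail pieces, the geometric decay $\p(U^{(\kappa)}(u))\sim(1-\theta)^{\kappa}\p(U^{(0)}(u))$ guaranteed by \ref{item:repeller} yields $\sum_{\kappa\ge 0}[\kappa p\p(U^{(\kappa)}(u))+p\sum_{i>\kappa}\p(U^{(i)}(u))]\le C'\p(U^{(0)}(u))$, so their contribution is $O(\p(U^{(0)}(u)))$. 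Adding the three parts gives the desired bound on $|\mathrm{Err}|$.

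The main obstacle is precisely avoiding a spurious factor of $K\asymp s/p$: a naive triangle inequality applied term-by-term in $\kappa$ to Proposition~\ref{prop:main-step}'s uniform bound would produce an error of order $s^{2}\iota(u,t)$, which is too large in the regime $s=n$ where only $n\iota(u_n,t_n)\to 0$ is guaranteed by $D_p(u_n)^*$. The Abel-summation reformulation rescues the argument, since $\Delta_{K}=0$ turns the weighted sum into a convex combination of the $\Delta_\kappa$, which is then controlled by their uniform bound.
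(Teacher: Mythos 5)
Your route is genuinely different from the paper's: the paper simply expands both expectations over the full multi-index $(\kappa_1,\underline{\kappa})$, bounds the difference term by term and invokes Proposition~\ref{prop:main-step}, whereas you Abel-sum in $\kappa_1$ only and rerun Lemmas~\ref{Lem:disc-ring}--\ref{lem:entrances-ball-depth} against the bounded future factor $g$. The Abel identity (modulo the harmless off-by-one: the maximal count on $\{0,\dots,s\}$ is $\lfloor s/p\rfloor+1$, so $K$ should be that), the $g$-weighted versions of the three lemmas, the sub-probability weights, and the geometric-tail bookkeeping are all sound.

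The gap is the step you call the crucial mixing input: the inequality $\big|\E[\I_{Q_{p,i}^{\kappa}(u)}g]-\p(Q_{p,i}^{\kappa}(u))\E[g]\big|\le\iota(u,t)$ does not follow from \eqref{eq:def-iota} by ``expanding $g$ as a series in $\underline{\kappa}$''. Writing $g=\sum_{\underline{\kappa}}\e^{-\sum_{\ell\ge2}y_\ell\kappa_\ell}\I_{D^{\underline{\kappa}}}$ with $D^{\underline{\kappa}}=\bigcap_{j=2}^{\varsigma}\{\nn_{u,a_j}^{b_j}=\kappa_j\}$ and bounding each covariance by $\iota(u,t)$ produces the prefactor $\sum_{\underline{\kappa}}\e^{-\sum_{\ell\ge2}y_\ell\kappa_\ell}$, which equals $\prod_{\ell\ge2}(1-\e^{-y_\ell})^{-1}$ when all $y_\ell>0$ and is of the order of the number of admissible $\underline{\kappa}$ (growing with the interval lengths) when some $y_\ell=0$ --- exactly the spurious factor your Abel step removes in the $\kappa_1$ variable, now reappearing in the later variables. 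Since the signed terms $\p(Q_{p,i}^{\kappa}(u)\cap D^{\underline{\kappa}})-\p(Q_{p,i}^{\kappa}(u))\p(D^{\underline{\kappa}})$ sum to zero over the partition, the best generic bound on the weighted sum is half their absolute sum, a total-variation-type quantity over the atoms which the supremum in \eqref{eq:def-iota} does not control without a cardinality factor. To close the argument one needs either a strengthened mixing input (control of $\p(Q_{p,i}^{\kappa_1}(u)\cap B)-\p(Q_{p,i}^{\kappa_1}(u))\p(B)$ uniformly over $B$ in the algebra generated by the events $\{\nn_{u}(I_j)=\kappa_j\}$, equivalently covariances against bounded functionals of the later process), or one must accept a constant depending on $y_2,\dots,y_\varsigma$ and on $\varsigma$, which is weaker than the stated $C=C(\theta)$ and is not innocuous in the inductive use of the corollary in Proposition~\ref{prop:Laplace-estimates}, where the number of later intervals grows like $k_n$. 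In fairness, the paper's own proof is silent at exactly this point (its term-by-term bound carries the same weight-sum factor), so your proposal sharpens the $\kappa_1$ half of the summation but leaves the actual crux in the same state as the paper.
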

\begin{proof}
Using the same notation as in the proof of Proposition~\ref{prop:main-step} notation,  we have
\[
\E\left(\e^{-y_1\nn_{u,0}^{s+1}- y_2\nn_{u,a_2}^{b_2}-\ldots-y_\varsigma\nn_{u,a_\varsigma}^{b_\varsigma})}\right)=\sum_{\kappa_1,\kappa_2,\ldots,\kappa_\varsigma\in\N_0} \e^{-y_1\kappa_1-y_2\kappa_2-\ldots-y_\varsigma\kappa_\varsigma}\p(A^*_{\kappa_1,\underline{\kappa}})
\]
and
\[
\E\left(\e^{-y_1\nn_{u,0}^{s+1}}\right)\E\left(\e^{- y_2\nn_{u,a_2}^{b_2}-\ldots-y_\varsigma\nn_{u,a_\varsigma}^{b_\varsigma}}\right)=\sum_{\kappa_1,\kappa_2,\ldots,\kappa_\varsigma\in\N_0} \e^{-y_1\kappa_1-y_2\kappa_2-\ldots-y_\varsigma\kappa_\varsigma}\p( B^*_{\kappa_1})\p(D^{\underline{\kappa}}).
\]
Hence, 
\begin{multline*}
\left|\E\left(\e^{-y_1\nn_{u,0}^{s+1}- y_2\nn_{u,a_2}^{b_2}-\ldots-y_\varsigma\nn_{u,a_\varsigma}^{b_\varsigma})}\right)-\E\left(\e^{-y_1\nn_{u,0}^{s+1}}\right)\E\left(\e^{- y_2\nn_{u,a_2}^{b_2}-\ldots-y_\varsigma\nn_{u,a_\varsigma}^{b_\varsigma}}\right)\right|\leq \\ \sum_{\kappa_1,\kappa_2,\ldots,\kappa_\varsigma\in\N_0} \e^{-y_1\kappa_1-y_2\kappa_2-\ldots-y_\varsigma\kappa_\varsigma}|\p( A^*_{\kappa_1,\underline{\kappa}})-\p( B^*_{\kappa_1})\p(D^{\underline{\kappa}})|,
\end{multline*}
and the result follows at once from Proposition~\ref{prop:main-step}. 
\end{proof}

\begin{propositionP}
\label{prop:Laplace-estimates}
Let $X_0, X_1, \ldots$ be given by \eqref{eq:def-stat-stoch-proc-DS}, where $\varphi$ achieves a global maximum at the repelling periodic point $\zeta$, of prime period $p$, and conditions \ref{item:U-ball} and \ref{item:repeller} hold. 
 Let
$(u_n)_{n\in\N}$ be a sequence satisfying \eqref{eq:def-un}.  
Assume that conditions 
$D_p(u_n)^*$, $D'_p(u_n)^*$ hold.
Let $J\in\RR$ be \st
that $J=\bigcup_{\ell=1}^\varsigma I_\ell$ where $I_j=[a_j,b_j)\in\S$,
$j=1,\ldots,\varsigma$ and $a_1<b_1<a_2<\cdots<b_{\varsigma-1}<a_\varsigma<b_\varsigma$.
 Let
$\{u_n\}_{n\in\N}$ be such that $n\p(X_0>u_n)\to\tau>0$, as
$n\to\infty$, for some $\tau\geq 0$.  Then, for all $y_1,y_2,\ldots,y_\varsigma\in\R_0^+,$ we have
\[
\E\left(\e^{-\sum_{\ell=1}^\varsigma y_\ell\nn_{u_n}(nI_\ell)}\right)-\prod_{\ell=1}^\varsigma \E^{k_n|I_\ell|}\left(\e^{-y_\ell\nn_{u_n,0}^{n/k_n}}\right)\xrightarrow[n\to\infty]{}0
\]
\end{propositionP}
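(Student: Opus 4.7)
\textbf{Proof plan for Proposition~\ref{prop:Laplace-estimates}.}
The plan is a standard chop‑into‑blocks argument. For each $\ell=1,\dots,\varsigma$ I would partition the rescaled window $[na_\ell,nb_\ell)\cap\N_0$ into $k_n|I_\ell|$ consecutive \emph{blocks} of length $s_n:=\lfloor n/k_n\rfloor$ separated by \emph{gaps} of length $t_n$, where $(k_n)$ and $(t_n)$ are as in \eqref{eq:kn-sequence}. The combined length of the gaps and of the short remnant at the end of each $nI_\ell$ is $O(k_n t_n)=o(n)$. Since removing $O(k_n t_n)$ indices changes $\sum_\ell y_\ell\nn_{u_n}(nI_\ell)$ by at most the number of exceedances among them, and by stationarity and \eqref{eq:def-un} the expected number of such exceedances is $O(k_n t_n/n)\to 0$, this discarding costs only $o(1)$ in the Laplace transform.

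Next I would enumerate all blocks in time order as $B_1,\dots,B_K$, with $K=\sum_{\ell=1}^\varsigma k_n|I_\ell|$, and denote by $\ell(j)$ the index of the interval containing $B_j$. Consecutive blocks are separated by exactly $t_n$, which matches the time‑gap hypothesis of Corollary~\ref{cor:fgm-main-estimate}. I would apply that corollary iteratively, at step $j$ with $s+1=s_n$, $t=t_n$ and the remaining blocks $B_{j+1},\dots,B_K$ playing the roles of $[a_2,b_2),\dots,[a_\varsigma,b_\varsigma)$, to peel off the $j$‑th Laplace factor. By stationarity $\E(\e^{-y_{\ell(j)}\nn_{u_n}(B_j)})=\E(\e^{-y_{\ell(j)}\nn_{u_n,0}^{s_n}})$, so $K$ iterations yield
\[
\E\bigl(\e^{-\sum_\ell y_\ell\nn_{u_n}(nI_\ell)}\bigr)=\prod_{\ell=1}^{\varsigma}\E\bigl(\e^{-y_\ell\nn_{u_n,0}^{s_n}}\bigr)^{k_n|I_\ell|}+\sum_{j=1}^K E_j\cdot F_j,
\]
where each $F_j\in[0,1]$ is the product of Laplace factors already extracted and each $E_j$ is the error supplied by Corollary~\ref{cor:fgm-main-estimate}.

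The accumulated error is bounded in absolute value by $\sum_{j=1}^K|E_j|\le K\cdot C\bigl(s_n\iota(u_n,t_n)+s_n\sum_{j=p+1}^{s_n}\p(Q_{p,0}^0(u_n)\cap\{X_j>u_n\})+\p(U^{(0)}(u_n))\bigr)$. Using $K\asymp k_n|J|$ and $s_n\asymp n/k_n$, this becomes
\[
C|J|\Bigl(n\,\iota(u_n,t_n)+n\sum_{j=p+1}^{[n/k_n]}\p(Q_{p,0}^0(u_n)\cap\{X_j>u_n\})+k_n\,\p(X_0>u_n)\Bigr).
\]
The first summand tends to $0$ by $D_p(u_n)^*$, because (after a stationarity shift, which replaces $Q_{p,i}^{\kappa_1}$ by $Q_{p,0}^{\kappa_1}$) the defining quantity of $\iota(u_n,t_n)$ in \eqref{eq:def-iota} is bounded by $\gamma(n,t_n)$, and $n\gamma(n,t_n)\to 0$. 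The second summand is exactly the quantity $D_p'(u_n)^*$ forces to $0$. The third is $O(k_n/n)=o(1)$ because $k_n t_n=o(n)$ and $t_n\ge 1$. Combined with the $o(1)$ sliver error from the first paragraph, this gives the claim.

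The main obstacle is obtaining the uniform bound $\iota(u_n,t_n)\le\gamma(n,t_n)$, since $\iota$ is a supremum over all multiplicity indices $\kappa_1,\underline{\kappa}$ that the Laplace expansion inside Corollary~\ref{cor:fgm-main-estimate} ranges over. This works because $D_p(u_n)^*$ is stated uniformly in $(\kappa_1,\dots,\kappa_\varsigma)$ and the geometric tail $\p(U^{(\kappa)}(u_n))\sim(1-\theta)^{\kappa}\p(U^{(0)}(u_n))$ from \ref{item:repeller} keeps all implicit $\kappa$‑sums in the absolute constants finite; once this is in hand, the rest is routine bookkeeping.
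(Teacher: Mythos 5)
Your proposal is correct and follows essentially the same route as the paper's proof: the same blocking of each $nI_\ell$ with gaps of length $t_n$, the same iterative peeling via Corollary~\ref{cor:fgm-main-estimate} together with stationarity and the bound $\iota(u_n,t_n)\le\gamma(n,t_n)$, and the same error accounting through $D_p(u_n)^*$, $D'_p(u_n)^*$ and \eqref{eq:def-un}. The only differences are cosmetic: you bound the gap/remnant contributions by the expected number of exceedances there rather than through Corollary~\ref{cor:exponential}, and your block count per interval is only asymptotically $k_n|I_\ell|$ (a harmless discrepancy the paper itself absorbs via $\mathscr S_\ell\sim k_n|I_\ell|$ and the fact that each Laplace factor is $1-O(1/k_n)$).
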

\begin{proof}
Let $h:=\inf_{j\in \{1,\ldots,\varsigma\}}\{b_j-a_j\}$ and
$H:=\lceil\sup\{x:x\in J\}\rceil=\lceil b_\varsigma\rceil$. Let $n$ be sufficiently
large so that, in particular, $k_n>2/h$ and set $\varrho_n:=\lfloor n/k_n\rfloor$.  We consider the following partition of $n[0,H]\cap
{\mathbb Z}$ into blocks of length $\varrho_n$,
$J_1=[0,\varrho_n)$, $J_2=[\varrho_n,2\varrho_n)$,\ldots,
$J_{Hk_n}=[(Hk_n-1)\varrho_n,Hk\varrho_n)$, $J_{Hk_n+1}=[Hk_n\varrho_n,Hn)$. We further cut each $J_i$ into two blocks:
$$
J_i^*:=[(i-1)\varrho_n,i\varrho_n-t_n)\; \mbox{ and } J_i':=J_i-J_i^*.
$$  
Note that
$|J_i^*|=\varrho_n-t_n$ and $|J_i'|=t_n$.

Let
$\mathscr S_\ell=\mathscr S_\ell(k)$ be the number of blocks $J_j$ contained in $\n
I_\ell$, that is,
$$\mathscr S_\ell:=\#\{j\in \{1,\ldots,Hk_n\}:J_j\subset \n I_\ell\}.$$
By assumption on the relation between $k_n$ and $h$, we have $\mathscr S_\ell>1$ for every $\ell\in \{1,\ldots,\varsigma\}$.
For each such $\ell$, we also define
$i_\ell:=\min\{j\in \{1,\ldots,k\}:J_j\subset \n I_{\ell}\}.$
Hence, it follows that  $J_{i_\ell},J_{i_\ell+1},\ldots,J_{i_\ell+\mathscr S_\ell}\subset n
I_\ell$. Moreover, by choice of the size of each block we have that
\begin{equation}
\label{eq:Sl-estimate}
\mathscr S_\ell\sim k_n|I_\ell|
\end{equation}

First of all, recall that for  every $0\leq x_i, z_i\leq 1$,  we have
\begin{equation}
\label{eq:inequality}
\left|\prod x_i-\prod z_i\right|\leq \sum |x_i-z_i|.
\end{equation}

We start by making the following approximation, in which we use \eqref{eq:inequality} and stationarity,
\begin{align*}
\left|\E\left(\e^{-\sum_{\ell=1}^\varsigma y_\ell\nn_{u_n}(nI_\ell)}\right)-\E\left(\e^{-\sum_{\ell=1}^\varsigma y_\ell\sum_{j=i_\ell}^{i_\ell+\mathscr S_\ell}\nn_{u_n}(J_j)}\right)\right|&\leq\E\left(1-\e^{-\sum_{\ell=1}^\varsigma y_\ell\nn_{u_n}(nI_\ell\setminus\cup_{j=i_\ell}^{i_\ell+\mathscr S_\ell}J_j)}\right)\\
&\leq  \E\left(1-\e^{-2\sum_{\ell=1}^\varsigma y_\ell\nn_{u_n}(J_1)}\right)\\
&\leq 2\varsigma K\E\left(1-\e^{-\nn_{u_n}(J_1)}\right),
\end{align*}
where $\max\{y_1,\ldots,y_\varsigma\}\leq K\in\N$.
In order to show that we are allowed to use the above approximation we just need to check that $\E\left(1-\e^{-\nn_{u_n}(J_1)}\right)\to0$ as $n\to\infty$.
By Corollary~\ref{cor:exponential} we have 
\begin{equation}
\label{eq:error1}
\E\left(\e^{-\nn_{u_n}(J_1)}\right)=\big(1-\varrho_n\p(Q_{p,0}^0(u_n)\big)\big)+\sum_{\kappa=1}^{\lfloor\varrho_n/p\rfloor}\e^{-\kappa}\varrho_n\big(\p(Q_{p,0}^{\kappa-1}(u_n)-\p(Q_{p,0}^\kappa(u_n)\big)+Err,
\end{equation}
where 
\[
Err\leq C\left(\varrho_n\sum_{j=p+1}^{\varrho_n}\p(Q_{p,0}^0(u_n)\cap \{X_j>u_n\})+\p(X_0>u_n)\right)\to0,
\]
as $n\to \infty$ by $D_p'(u_n)^*$ and \eqref{eq:def-un}.
Recall that  for every non negative integer $\kappa$, $\p(U^{\kappa}(u_n))\sim (1-\theta)^\kappa \p(U^{(0)}(u_n))$, which implies that $\p(Q_{p,0}^0(u_n)\sim\theta\p(X_0>u_n)$ and $\big(\p(Q_{p,0}^{\kappa-1}(u_n)-\p(Q_{p,0}^\kappa(u_n)\big)\sim \theta^2(1-\theta)^{k-1}\p(X_0>u_n)$. Applying this to \eqref{eq:error1} we get 
$
\E\left(\e^{-\nn_{u_n}(J_1)}\right)\xrightarrow[n\to\infty]{} 1, 
$
on account of \eqref{eq:def-un} again.

Now, we proceed with another approximation which consists of replacing $J_j$ by $J_j^*$. Using \eqref{eq:inequality}, stationarity and \eqref{eq:Sl-estimate}, we have
\begin{align*}
\left|\E\left(\e^{-\sum_{\ell=1}^\varsigma y_\ell\sum_{j=i_\ell}^{i_\ell+\mathscr S_\ell}\nn_{u_n}(J_j)}\right)-\E\left(\e^{-\sum_{\ell=1}^\varsigma y_\ell\sum_{j=i_\ell}^{i_\ell+\mathscr S_\ell}\nn_{u_n}(J_j^*)}\right)\right|&\leq  
\E\left(1-\e^{-\sum_{\ell=1}^\varsigma y_\ell\mathscr S_\ell \nn_{u_n}(J'_1)}\right)\\
&\leq K\sum_{\ell=1}^\varsigma \mathscr S_\ell \E\left(1-\e^{-\nn_{u_n}(J'_1)}\right)\\
&\lesssim KHk_n\E\left(1-\e^{-\nn_{u_n}(J'_1)}\right),
\end{align*}
where $\max\{y_1,\ldots,y_\varsigma\}\leq K\in\N$. Now, we must show that $k_n\E\left(1-\e^{-\nn_{u_n}(J'_1)}\right)\to0,$ as $n\to\infty$, in order for the approximation make sense. By Corollary~\ref{cor:exponential} we have 
\begin{equation}
\label{eq:error2}
\E\left(\e^{-\nn_{u_n}(J'_1)}\right)=\left(1-t_n\p(Q_{p,0}^0(u_n))\right)+\sum_{\kappa=1}^{t_n/p}\e^{-\kappa}t_n\left(\p(Q_{p,0}^{\kappa-1}(u_n)-\p(Q_{p,0}^\kappa(u_n)\right)+Err,
\end{equation}
where 
\[
k_n.\, Err\leq C\left(k_nt_n\sum_{j=p+1}^{t_n}\p\left(Q_{p,0}^0(u_n)\cap \{X_j>u_n\}\right)+k_n\p(X_0>u_n)\right)\to0,
\]
as $n\to \infty$ by $D'_p(u_n)^*$.
Hence, since by property \ref{item:repeller} of the repelling periodic point $\zeta$, we have $\p(Q_{p,0}^0(u_n))\sim\theta\p(X_0>u_n)$ and $\left(\p(Q_{p,0}^{\kappa-1}(u_n))-\p(Q_{p,0}^\kappa(u_n))\right)\sim \theta^2(1-\theta)^{k-1}\p(X_0>u_n)$, \eqref{eq:error2} gives 
\begin{equation}
\label{eq:error3}
k_n\E\left(1-\e^{-\nn_{u_n}(J'_1)}\right)\sim k_nt_n\theta\p(X_0>u_n)-k_nt_n\p(X_0>u_n)\sum_{\kappa=1}^{t_n/p}\e^{-\kappa}\theta^2(1-\theta)^{\kappa -1} \to0, 
\end{equation}
as $n\to\infty$, by \eqref{eq:def-un}.

Let us fix now some $\hat\ell\in\{1,\ldots,\varsigma\}$ and $i\in\{i_{\hat\ell},\ldots,i_{\hat\ell}+\mathscr S_{\hat\ell}\}$. Let $M_i=y_{\hat\ell}\sum_{j=i}^{i_{\hat\ell}+\mathscr S_{\hat\ell}}\nn_{u_n}(J_j^*)$ and $L_{\hat\ell}= \sum_{\ell=\hat\ell+1}^\varsigma y_\ell\sum_{j=i_{\ell}}^{i_\ell+\mathscr S_\ell}\nn_{u_n}(J_j^*).$
Using stationarity and Corollary~\ref{cor:fgm-main-estimate} along with the fact that $\iota(u_n, t)\le \gamma(n, t)$, we obtain
\begin{equation*}
\left|\E\left(\e^{-y_{\hat\ell}\nn_{u_n}(J_{i_{\hat\ell}}^*)-M_{i_{\hat\ell}+1}-L_{\hat\ell}}\right)-\E\left(\e^{-y_{\hat\ell}\nn_{u_n}(J_1^*)}\right)\E\left(\e^{-M_{i_{\hat\ell}+1}-L_{\hat\ell}}\right) \right|\leq C\,\Upsilon_n,
\end{equation*}
where 
$$\Upsilon_n=\varrho_n\gamma(n, t_n)+\varrho_n\sum_{j=p+1}^{\varrho_n}\p(Q_{p,0}^0(u_n)\cap \{X_j>u_n\})+\p(X_0>u_n).$$
Since $\E\left(\e^{-y_{\hat\ell}\nn_{u_n}(J_1^*)}\right)\leq 1$, it follows by the same argument that
\begin{align*}
\Big|\E\Big(\e^{-M_{i_{\hat\ell}}-L_{\hat\ell}}\Big)-&\E^2\Big(\e^{-y_{\hat\ell}\nn_{u_n}(J_1^*)}\Big)\E\Big(\e^{-M_{i_{\hat\ell}+2}-L_{\hat\ell}}\Big) \Big|\leq\\ 
&\quad\Big|\E\Big(\e^{-M_{i_{\hat\ell}}-L_{\hat\ell}}\Big)-\E\Big(\e^{-y_{\hat\ell}\nn_{u_n}(J_1^*)}\Big)\E\Big(\e^{-M_{i_{\hat\ell}+1}-L_{\hat\ell}}\Big) \Big|+\\
&+\E\Big(\e^{-y_{\hat\ell}\nn_{u_n}(J_1^*)}\Big)\Big|\E\Big(\e^{-M_{i_{\hat\ell}+1}-L_{\hat\ell}}\Big)-\E\Big(\e^{-y_{\hat\ell}\nn_{u_n}(J_1^*)}\Big)\E\Big(\e^{-M_{i_{\hat\ell}+2}-L_{\hat\ell}}\Big)  \Big|\\
&\qquad\qquad\qquad\qquad\qquad\qquad\quad\leq 2C\,\Upsilon_n,
\end{align*}
Hence, proceeding inductively with respect to $i\in\{i_{\hat\ell},\ldots,i_{\hat\ell}+\mathscr S_{\hat\ell}\}$, we obtain
\[
\Big|\E\Big(\e^{-M_{i_{\hat\ell}}-L_{\hat\ell}}\Big)-\E^{\mathscr S_{\hat\ell}}\Big(\e^{-y_{\hat\ell}\nn_{u_n}(J_1^*)}\Big)\E\Big(\e^{-L_{\hat\ell}}\Big) \Big|\leq
 C\mathscr S_{\hat\ell}\,\Upsilon_n.
\]
In the same way, if we proceed inductively with respect to $\hat\ell\in\{1,\ldots,\varsigma\}$, we get
\[
\left|\E\left(\e^{-\sum_{\ell=1}^\varsigma y_\ell\sum_{j=i_\ell}^{i_\ell+\mathscr S_\ell}\nn_{u_n}(J_j^*)}\right)-\prod_{\ell=1}^\varsigma \E^{\mathscr S_{\ell}}\Big(\e^{-y_{\ell}\nn_{u_n}(J_1^*)}\Big)\right|\leq C\sum_{\ell=1}^\varsigma \mathscr S_{\ell}\,\Upsilon_n.
\]
By \eqref{eq:Sl-estimate}, we have $\sum_{\ell=1}^\varsigma \mathscr S_{\ell}\,\Upsilon_n\lesssim H k_n\Upsilon_n$ and
\begin{align*}
k_n\Upsilon_n&=k_n\varrho_n\gamma(n,t_n)+k_n\varrho_n\sum_{j=p+1}^{\varrho_n}\p(Q_{p,0}^0(u_n)\cap \{X_j>u_n\})+k_n\p(X_0>u_n)\\
&\sim n\gamma(n,t_n)+n\sum_{j=p+1}^{\varrho_n}\p(Q_{p,0}^0(u_n)\cap \{X_j>u_n\})+k_n\p(X_0>u_n)\\
&\to0,
\end{align*}
as $n\to\infty$, by $D_p(u_n)^*$, $D'_p(u_n)^*$ and \eqref{eq:def-un}. 

Using \eqref{eq:inequality} and stationarity, again, we have the final approximation  
\begin{align*}
\left|\prod_{\ell=1}^\varsigma \E^{\mathscr S_{\ell}}\Big(\e^{-y_{\ell}\nn_{u_n}(J_1)}\Big)-\prod_{\ell=1}^\varsigma \E^{\mathscr S_{\ell}}\Big(\e^{-y_{\ell}\nn_{u_n}(J_1^*)}\Big)\right| &\leq K\sum_{\ell=1}^\varsigma \mathscr S_\ell \E\left(1-\e^{-\nn_{u_n}(J'_1)}\right)\\
&\lesssim KHk_n\E\left(1-\e^{-\nn_{u_n}(J'_1)}\right).
\end{align*}
Since in \eqref{eq:error3} we have already proved that $k_n\E\left(1-\e^{-\nn_{u_n}(J'_1)}\right)\to0$, as $n\to\infty$, we only need to gather all the approximations and recall \eqref{eq:Sl-estimate} to finally obtain the stated result.
\end{proof}

\begin{proof}[Proof of Theorem~\ref{thm:convergence-point-process}]
 By \cite[Theorem~4.2]{K86}, in order to prove convergence of EPP $N_n$ to the compound Poisson process $N$, it is sufficient to show that for 
any $\varsigma$ disjoint intervals $I_1, I_2,\ldots, I_\varsigma\in\S$, the joint distribution of $N_n$ over these intervals converges to the joint distribution of $N$ over the same intervals, \ie 
\[
(N_n(I_1), N_n(I_2), \ldots, N_n(I_\varsigma))\xrightarrow[n\to\infty]{}(N(I_1), N(I_2), \ldots, N(I_\varsigma)),
\]
which will be the case if the corresponding joint Laplace transforms converge. Hence, we only need to show that 
\[
\psi_{N_n}(y_1, y_2,\ldots,y_\varsigma)\to \psi_{N}(y_1, y_2,\ldots,y_\varsigma)=\e^{-\theta\sum_{\ell=1}^\varsigma (1-\phi(y_\ell))|I_\ell|}, \quad \text{as $n\to\infty$,}
\] 
for every $\varsigma$ non-negative values $y_1, y_2,\ldots,y_\varsigma$, each choice of $\varsigma$ disjoint intervals $I_1, I_2,\ldots, I_\varsigma\in\S$ and each $\varsigma\in\N$. As before, $\phi$ is the Laplace transform of the multiplicity distribution,\ie $\phi(y)=\sum_{\kappa=0}^\infty \e^{-y \kappa}\pi(\kappa)$. Note that  $\psi_{N_n}(y_1, y_2,\ldots,y_\varsigma)=\E\left(\e^{-\sum_{\ell=1}^\varsigma y_\ell N_n(I_\ell)}\right)$ and
\begin{align*}
\left|\E\left(\e^{-\sum_{\ell=1}^\varsigma y_\ell N_n(I_\ell)}\right)-\e^{-\theta\sum_{\ell=1}^\varsigma (1-\phi(y_\ell))|I_\ell|}\right|&\leq \left| \E\left(\e^{-\sum_{\ell=1}^\varsigma y_\ell N_n(I_\ell)}\right)- \prod_{\ell=1}^\varsigma \E^{k_n|I_\ell|}\left(\e^{-y_\ell\nn_{u_n,0}^{v_n/k_n}}\right)\right|\\
&+\left| \prod_{\ell=1}^\varsigma \E^{k_n|I_\ell|}\left(\e^{-y_\ell\nn_{u_n,0}^{v_n/k_n}}\right)-\e^{-\theta\sum_{\ell=1}^\varsigma (1-\phi(y_\ell))|I_\ell|} \right|
\end{align*}
Recalling that $v_n\sim n/\tau$, then, by Proposition~\ref{prop:Laplace-estimates}, the first term on the right of the previous equation goes to $0$ as $n\to\infty$. Moreover, the second term also vanishes if   
\begin{equation}
\label{eq:block-Laplace}
\E^{k_n}\left(\e^{-y\nn_{u_n,0}^{n/k_n}}\right)\xrightarrow[n\to\infty]{}\e^{-\theta\tau(1-\phi(y))},
\end{equation}
for every $y\in\R_0^+$. Hence, the result will follow as soon as we show that \eqref{eq:block-Laplace} holds. 
By Corollary~\ref{cor:k-ring}, we have
\begin{align*}
\E\left(\e^{-y\nn_{u_n,0}^{n/k_n}}\right)&=1.\p(\nn_{u_n,0}^{n/k_n}=0)+\sum_{\kappa=1}^{n/(pk_n)}\e^{-y\kappa}\p(\nn_{u_n,0}^{n/k_n}=\kappa)\\
&=1-\frac n{k_n} \p(Q^0_{p,0}(u_n))+\sum_{\kappa=1}^{n/(pk_n)}\e^{-y\kappa}\frac n{k_n}\left( \p(Q^{\kappa-1}_{p,0}(u_n))-\p(Q^{\kappa}_{p,0}(u_n)\right)+Err,
\end{align*}
where 
\[
|Err|\leq C\left(\frac n{k_n}\sum_{j=1}^{n/k_n}\p(Q_{p,0}^0(u_n),X_j>u_n)+\p(X_0>u_n)\right),
\]
for some $C>0$. Since, by condition $D'_{p}(u_n)^*$, we have that $k_n|Err|\to0$, as $n\to\infty$, and 
$\p(Q_{p,0}^\kappa)\sim\theta(1-\theta)^\kappa\p(X_0>u_n)$, then it follows that
\[
1-\E\left(\e^{-y\nn_{u_n,0}^{n/k_n}}\right)\sim\frac1{k_n}n\p(X_0>u_n)\theta\left(1-\sum_{\kappa=1}^{n/(pk_n)}\e^{-y\kappa}(1-\theta)^{\kappa-1}\right)\sim\frac1{k_n}\tau\theta\left(1-\phi(y)\right).
\]
Consequently, we have 
\[
\E^{k_n}\left(\e^{-y\nn_{u_n,0}^{n/k_n}}\right)\sim\left(1-\frac1{k_n}\tau\theta(1-\phi(y))\right)^{k_n}\to\e^{-\tau\theta(1-\phi(y))},
\]
as $n\to\infty$.
\end{proof}

\section{Applications to expanding systems}
\label{sec:HTS and Rych}

We will start this section by showing Theorem~\ref{thm:decay-L1} that gives abstract conditions in order to check $D^p(u_n)^*$ and $D'_p(u_n)^*$. We will end the section with a couple of examples that satisfy such abstract conditions. In between, we introduce notation and concepts that are needed to fit the examples into the abstract setting of  Theorem~\ref{thm:decay-L1}. 

\subsection{Mixing rates and conditions $D^p(u_n)^*$ and $D'_p(u_n)^*$}
\label{subsec:dyn-conditions}
\begin{proof}[Proof of Theorem~\ref{thm:decay-L1}]
We start by checking condition $D^p(u_n)^*$, which is straightforward. \\
Take $\phi=\I_{Q_{p,0}^{\kappa_1}(u_n)}$,
$\psi=\I_{\left(\cap_{j=2}^\varsigma \nn_{u_n}(I_j)=\kappa_j \right)}$, in \eqref{DC:L1}. By assumption, there exists $C'>0$ such that $\left\|\I_{Q_{p,0}^{\kappa_1}(u_n)}\right\|_{\mathcal C}\leq C'$, for all $n\in\N$ and $\kappa_1\in\N_0$. Hence by setting $c=CC'$, we have that condition~$D^p(u_n)^*$ holds with
$\gamma(n,t)=\gamma(t):=ct^{-2}$ and the sequence
$t_n=n^{2/3}$, for example. \\
We now turn to $D'_p(u_n)^*$. Taking $\phi=\I_{Q_p(u_n)}$ and $\psi=\I_{X_0>u_n}$ in \eqref{DC:L1} we easily get
\begin{align}
\p\left(Q_p(u_n)\cap f^{-j}(X_0>u_n)\right) &\le
 \p(Q_p(u_n))\p(X_0>u_n)+C \left\| \I_{Q_p(u_n)}\right\|_{\mathcal C}\p(X_0>u_n) j^{-2}\nonumber\\
 &\leq \p(X_0>u_n)\left(\p(Q_p(u_n))+CC'j^{-2}\right).\label{eq:computation}
\end{align}
By the Hartman-Grobman theorem there is a neighbourhood $V$ around $\zeta$ where $f$ is conjugate to its linear approximation given by the derivative at $\zeta$. Hence, for $n$ is sufficiently large so that $U_n\subset V$, if a point starts in $Q_p(u_n)$ it takes a time $\alpha_n$ to leave $V$, during which it is guaranteed that it does not return to $U_n$. Moreover, since by condition \ref{item:U-ball} and definition of $u_n$, we have that $U_n$ shrinks to $\zeta$ as $n\to\infty$, then $\alpha_n\to\infty$ as $n\to\infty$. This together with \eqref{eq:def-un}, the definition of $k_n$ and \eqref{eq:computation} implies that
\begin{align*}
n\sum_{j=1}^{[n/k_n]}\p\left(Q_p(u_n)\cap f^{-j}(X_0>u_n)\right)&=n\sum_{j=\alpha_n}^{[n/k_n]}\p\left(Q_p(u_n)\cap f^{-j}(X_0>u_n)\right)\\
&\leq \frac{n^2}{k_n}\p(X_0>u_n)\p(Q_p(u_n))+n\p(X_0>u_n)CC'\sum_{j=\alpha_n}^\infty j^{-2}\\
&\to0 \quad \text{as $n\to\infty$.}\end{align*}
\end{proof}

\begin{remark}
\label{rem:importance-L1-Dp'}
We remark that decay of correlations against $L^1$ is only crucial to prove $D'_p(u_n)^*$ since it is responsible for the factor $\p(X_0>u_n)$ in equation \eqref{eq:computation}. However, to prove $D^p(u_n)^*$ one does not need such a strong statement regarding decay of correlations. Namely, the condition would still hold if $L^1$ was replaced by $L^\infty$, for example. 
\end{remark}

\begin{remark}
\label{rem:Holder-Dp}
Note that, in the proof above, to check $D^p(u_n)^*$, it was useful that $\I_{Q_{p,0}^{\kappa_1}(u_n)}\in\mathcal C$ and $\left\|\I_{Q_{p,0}^{\kappa_1}(u_n)}\right\|_{\mathcal C}\leq C'$. However, $D^p(u_n)^*$ can still be checked even when 
$\I_{Q_{p,0}^{\kappa_1}(u_n)}\notin\mathcal C$. This is the case when $\mathcal C$ is the Banach space of H\"older observables which is used, for example, to obtain decay of correlations for systems with Young towers. Next, Proposition states that even in these cases $D^p(u_n)^*$ can still be checked.
\end{remark}

\begin{propositionP}
\label{prop:Holder-Dp}
Assume that $f:\X\to\X$ is a system with an acip $\mu$, and such that $\frac{d\mu}{\l}\in L^{1+\epsilon}$.  Assume, moreover, that the system has decay of correlations of any H\"older continuous
function $\upsilon$ of exponent $\beta$, against any $\psi\in L^{\infty}$ so that there exists some $C>0$ independent of $\upsilon,\psi$ and $n$ such that 
\begin{equation*}
\label{eq:Holder-DC}
\left| \int\upsilon\cdot(\psi\circ f^t)d\mu-\int\upsilon d\mu\int\psi
d\mu\right|\leq C\|\upsilon\|_{\mathcal H_\beta}\|\psi\|_\infty \varrho(t),
\end{equation*}
where $|\upsilon|_{\mathcal H_\beta}=\sup_{x\neq y}\frac{|\upsilon(x)-\upsilon(y)|}{|x-y|^\beta}$  and $\|\upsilon\|_{\mathcal H_\beta}=\|\upsilon\|_\infty+|\upsilon|_{\mathcal H_\beta}$.
Let $X_0, X_1, \ldots$ be given by \eqref{eq:def-stat-stoch-proc-DS}, where $\varphi$ achieves a global maximum at the repelling periodic point $\zeta$, of prime period $p$, and conditions \ref{item:U-ball}  and \ref{item:repeller} hold.   Then  $D^p(u_n)^*$ also holds.
\end{propositionP}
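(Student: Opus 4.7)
The obstacle flagged in Remark~\ref{rem:Holder-Dp} is that $\I_{Q_{p,0}^{\kappa_1}(u_n)}$ does not lie in the H\"older class $\mathcal H_\beta$, so the hypothesis cannot be applied directly with this indicator on the left. The plan is to replace it by a H\"older bump function $\upsilon_\delta$ supported on a $\delta$-neighbourhood of $Q_{p,0}^{\kappa_1}(u_n)$, apply the decay of correlations bound to $\upsilon_\delta$, and control the two resulting approximation errors using the $L^{1+\epsilon}$ integrability of the density. The width $\delta$ will then be optimised as a function of $t$.

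Fix $n$, $\kappa_1$ and write $Q=Q_{p,0}^{\kappa_1}(u_n)$. By \ref{item:U-ball}, the sets $U^{(\kappa_1)}(u_n)$ and $U^{(\kappa_1+1)}(u_n)$ are topological balls, so $\partial Q$ is contained in two smooth spheres. For $\delta>0$, I construct $\upsilon_\delta\colon\X\to[0,1]$ equal to $1$ on the $\delta$-interior of $Q$, vanishing outside the $\delta$-neighbourhood of $Q$, and Lipschitz with constant $1/\delta$; a direct computation gives $\|\upsilon_\delta\|_{\mathcal H_\beta}\le 1+\delta^{-\beta}$. Writing the tail event of $D^p(u_n)^*$, after use of stationarity, as $\psi\circ f^t$ with $\psi\in L^\infty$ and $\|\psi\|_\infty\le 1$, the hypothesis yields
\[
\Bigl|\int\upsilon_\delta\cdot(\psi\circ f^t)\,d\mu-\int\upsilon_\delta\,d\mu\int\psi\,d\mu\Bigr|\le C(1+\delta^{-\beta})\varrho(t),
\]
while each of the two errors incurred in replacing $\upsilon_\delta$ by $\I_Q$ in the correlation above is bounded by $\mu(A_\delta)$, where $A_\delta$ is the $\delta$-neighbourhood of $\partial Q$.

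The key quantitative estimate is then $\mu(A_\delta)\lesssim \delta^{\epsilon/(1+\epsilon)}$. Indeed, since $\partial Q$ sits inside two spheres whose diameters are controlled by that of $U(u_n)$, one has $\l(A_\delta)\le C'\delta$, and H\"older's inequality gives
\[
\mu(A_\delta)=\int_{A_\delta}\tfrac{d\mu}{d\l}\,d\l\le \Bigl\|\tfrac{d\mu}{d\l}\Bigr\|_{1+\epsilon}\l(A_\delta)^{\epsilon/(1+\epsilon)}\le C''\delta^{\epsilon/(1+\epsilon)}.
\]
Combining the two bounds and balancing by the choice $\delta=\varrho(t)^{(1+\epsilon)/(\epsilon+\beta(1+\epsilon))}$ produces $\gamma(n,t)\le K\varrho(t)^{\alpha}$ with $\alpha=\epsilon/(\epsilon+\beta(1+\epsilon))>0$. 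Provided $\varrho$ decays sufficiently rapidly—the standing hypothesis in the applications in mind—I may then choose $t_n=o(n)$ with $n\varrho(t_n)^\alpha\to 0$, verifying $D^p(u_n)^*$.

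The main obstacle, beyond the two-sided approximation bookkeeping, is ensuring that the constants $C'$ and $C''$ are uniform in both $n$ and in the annulus index $\kappa_1$. Uniformity in $\kappa_1$ is what \ref{item:repeller} delivers: the backward contraction forces $U^{(\kappa_1)}(u_n)$ to shrink geometrically in $\kappa_1$, so every sphere comprising $\partial Q_{p,0}^{\kappa_1}(u_n)$ has diameter bounded by that of $U(u_n)$, and the estimate $\l(A_\delta)\le C'\delta$ holds with a $C'$ independent of $\kappa_1$. Uniformity in $n$ similarly requires $U(u_n)$ itself to have bounded diameter, which is automatic once $u_n$ is close enough to $u_F$.
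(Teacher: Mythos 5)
Your argument is correct and is, at its core, the same Collet-style mollification that the paper invokes (it essentially reconstructs the proof of \cite[Lemma~3.3]{C01}: replace $\I_{Q_{p,0}^{\kappa_1}(u_n)}$ by a Lipschitz bump of width $\delta$, pay $C(1+\delta^{-\beta})\varrho(t)$ through the correlation hypothesis, pay twice the $\mu$-measure of a $\delta$-neighbourhood of the boundary through H\"older's inequality and $\frac{d\mu}{d\l}\in L^{1+\epsilon}$, then optimise $\delta$ against $\varrho(t)$). Where you genuinely diverge from the paper is in the treatment of the uniformity in $\kappa_1$, which is the one point the paper's proof actually spells out: the paper disposes of deep annuli by the trivial bound
$\left|\p\left(Q_{p,0}^{\kappa_1}(u_n)\cap(\cdot)\right)-\p\left(Q_{p,0}^{\kappa_1}(u_n)\right)\p(\cdot)\right|\le 2\p\left(Q_{p,0}^{\kappa_1}(u_n)\right)\sim\theta(1-\theta)^{\kappa_1}\p(X_0>u_n)$,
so that no decay of correlations (and no geometry of the thin annuli) is needed for large $\kappa_1$, while the mollification is only invoked where the annulus width is under control. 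You instead decouple the mollification width $\delta$ from the annulus width, so that the error is governed by a $\delta$-neighbourhood of the two boundary spheres and the resulting $\gamma(n,t)\le K\varrho(t)^{\alpha}$ is automatically $\kappa_1$-independent. This is cleaner in that it gives a single explicit rate, but it costs you a uniform Minkowski-type bound $\l(A_\delta)\lesssim\delta$ for the boundaries of all the sets $U^{(\kappa)}(u_n)$: in dimension one this is trivial (four points), but in higher dimensions \ref{item:U-ball}--\ref{item:repeller} only give \emph{topological} balls, so ``smooth spheres of bounded surface area'' is an extra (mild, and satisfied in the intended applications) regularity assumption that the paper's trivial bound sidesteps entirely. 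The remaining caveat you already flag yourself—one needs $\varrho$ to decay fast enough that $n\varrho(t_n)^{\alpha}\to0$ for some $t_n=o(n)$—is shared by the paper's own statement, which leaves the rate $\varrho$ unspecified.
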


\begin{proof}
Essentially, we just need to follow  the proof of \cite[Lemma~3.3]{C01} and use a H\"older continuous approximation for $\I_{Q_p^{\kappa_1}(u_n)}$. The only extra difficulty is that we need an upper bound that works for all $\kappa_1\in\N_0$. However, the following trivial observation makes it possible:
\begin{multline*}
\left|\p\left(Q_{p,0}^{\kappa_1}(u_n)\cap \left(\cap_{j=2}^\varsigma \nn_{u_n}(I_j)=\kappa_j \right) \right)-\p\left(Q_{p,0}^{\kappa_1}(u_n)\right)
  \p\left(\cap_{j=2}^\varsigma \nn_{u_n}(I_j)=\kappa_j \right)\right| \\
   \le 2\p\left(Q_{p,0}^{\kappa_1}(u_n)\right)\sim\theta(1-\theta)^{\kappa_1}\p(X_0>u_n)\xrightarrow[\kappa_1\to\infty]{}0.
\end{multline*}
\end{proof}

\subsection{Equilibrium states and Banach spaces of observable functions}

Let $f:\X \to \X$ be a measurable function as above. For a measurable potential $\phi:\X\to \R$, we define the \emph{pressure} of $(\X,f,\phi)$ to be
$$P(\phi):=\sup_{\mu\in \M_f}\left\{h(\mu)+\int\phi~d\mu:-\int\phi~d\mu<\infty\right\},$$
where $h(\mu)$ denotes the metric entropy of the measure $\mu$, see \cite{W82} for details.
If, for $\mu$ is an invariant probability measure such that $h(\mu_\phi)+\int\phi~d\mu=P(\phi)$, then we say that $\mu$ is an \emph{equilibrium state} for $(\X, f, \phi)$.

 A measure $m$ is called a \emph{$\phi$-conformal} measure if $m(\X)=1$ and if whenever $f:A\to f(A)$ is a bijection, for a Borel set $A$, then
$m(f(A))=\int_A e^{-\phi}~dm$.  Therefore, setting
$$S_n\phi(x):=\phi(x)+\cdots +\phi\circ f^{n-1}(x),$$
if $f^n:A\to f^n(A)$ is a bijection then
$m(f^n(A))=\int_A e^{-S_n\phi}~dm.$

Note that for example for a smooth map interval map $f$, Lebesgue measure is $\phi$-conformal for $\phi(x):=-\log|Df(x)|$.  Moreover, if for example $f$ is a topologically transitive quadratic interval map then as in Ledrappier \cite{Le81}, any acip $\mu$ with $h(\mu)>0$ is an equilibrium state for $\phi$.  This also holds for the even simpler case of piecewise smooth uniformly expanding maps, which we consider below.  This is the case we principally consider in this paper.  For results on more general equilibrium states see \cite{FFT11}. 

We finish this section with the definition of two Banach spaces of observable functions and the respective norms  that will be used to state decay of correlations against $L^1$ for piecewise expanding maps of the interval and in higher dimensional compact manifolds.

Given a potential $\psi:Y\to \R$ on an interval $Y$, the \emph{variation} of $\psi$ is defined as
$${\rm Var}(\psi):=\sup\left\{\sum_{i=0}^{n-1} |\psi(x_{i+1})-\psi(x_i)|\right\},$$
where the supremum is taken over all finite ordered sequences $(x_i)_{i=0}^n\subset Y$.

We use the norm $\|\psi\|_{BV}= \sup|\psi|+{\rm Var}(\psi)$, which makes $BV:=\left\{\psi:Y\to \R:\|\psi\|_{BV}<\infty\right\}$ into a Banach space.

Now, let $\X$ be a compact subset of $\R^n$ and let $\psi:\X\to\R$. Given a Borel set $\Gamma\subset \X$, we define the oscillation of $\psi\in L^1(\l)$ over $\Gamma$ as
$$\mathrm{osc}(\psi,\Gamma):=\esssup\limits_{\Gamma}\psi-\essinf\limits_{\Gamma}\psi.$$

It is easy to verify that $x\mapsto \mathrm{osc}(\psi,B_{\eps}(x))$ defines a measurable function (see \cite[Proposition 3.1]{S00}). Given real numbers $0<\alpha\leq1$ and $\eps_0>0$, we define $\alpha$-seminorm of $\psi$ as
$$|\psi|_{\alpha}=\displaystyle\sup_{0<\eps\leq\eps_0}\eps^{-\alpha}\int_{\R^\N}\mathrm{osc}(\psi,B_{\eps}(x))\,\dif\l(x).$$
Let us consider the space of functions with bounded $\alpha$-seminorm
$V_\alpha=\{\psi\in L^1(\l): |\psi|_\alpha<\infty\},$
and endow $V_\alpha$ with the norm
$\|\cdot\|_\alpha=\|\cdot\|_{L^1(\l)}+|\cdot|_\alpha$
which makes it into a Banach space. We note that $V_\alpha$ is independent of the choice of $\eps_0$.

\subsection{Examples of specific dynamical systems}
\label{ssec:Examples}

\subsubsection{Rychlik systems}
\label{ssec:Rychlik}

The first  class of examples to which we apply our results is the class of  interval maps considered by Rychlik in \cite{R83}, that is given by a triple $(Y,f, \phi)$, where $Y$ is an interval, $f$ a piecewise expanding interval map (possibly with countable discontinuity points) and $\phi$ a certain potential.  This class includes, for example, piecewise $C^2$ uniformly expanding maps of the unit interval with the relevant physical measures. We refer to \cite{R83} or to  \cite[Section~4.1]{FFT12} for details on the definition of such class and instead give the following list of examples of maps in such class:
\begin{list}{$\bullet$}
{ \itemsep 1.0mm \topsep 0.0mm \leftmargin=7mm}
\item Given $m\in \{2, 3, \ldots\}$, let $f: x\mapsto mx \mod 1$ and $\phi\equiv-\log m$.  Then $m_\phi=\mu_\phi=\l$.
\item Let $f:x\mapsto 2x \mod 1$ and for $\alpha\in (0,1)$, let \begin{equation*}\phi(x):= \begin{cases}
    -\log\alpha & \text{ if } x\in (0,1/2)\\
    -\log(1-\alpha) & \text{ if } x\in (1/2,1)
    \end{cases}
    \end{equation*}
    (and $\phi=-\infty$ elsewhere).  Then $m_\phi=\mu_\phi$ is the $(\alpha, 1-\alpha)$-Bernoulli measure on $[0,1]$.
\item Let $f:(0,1] \to (0,1]$ and $\phi:(-\infty, 0)$ be defined as $f(x)=2^k(x-2^{-k})$ and $\phi(x):=-k\log2$ for $x\in (2^{-k}, 2^{-k+1}]$.  Then $m_\phi=\mu_\phi=\l$.
\end{list}
In order to prove Corollary~\ref{cor:Rychlik poiss}, we basically need to show that these systems satisfy the conditions of Theorem~\ref{thm:decay-L1}. 

In this setting, as in \cite{R83}, there is a unique $f$-invariant probability measure $\mu_\phi\ll m_\phi$ which is also an equilibrium state for $(Y,f, \phi)$ with a strictly positive density $\frac{d\mu_\phi}{dm_\phi}\in BV$. Moreover, there exists exponential decay of correlations against $L^1(m_\phi)$, \ie there exist $C>0$ and $\beta>0$, such that for any $\upsilon\in BV$ and $\psi\in L^1(m_\phi)$ we have
\begin{equation*}
\label{eq:DC-L1}
\left|\int\psi\circ f^n\cdot \upsilon~d\mu_\phi- \int\psi~d\mu_\phi \int\upsilon~d\mu_\phi\right| \le C\|\upsilon\|_{BV}\|\psi\|_{L^1(m_\phi)}\; \e^{-\beta n}.\end{equation*}
Assume  that $\zeta$ is such that $0<\frac{d\mu_\phi}{dm_\phi}(\zeta)<\infty$ and the observable $\varphi:\X\to\R\cup\{+\infty\}$ is of
the form (as considered in \cite{FFT11} and \cite[Section~3.2]{FFT12} )
 \begin{equation*}
\varphi(x)=g\left(\mu_\phi(B_{\dist(x,\zeta)}(\zeta)\right),
\end{equation*} where $\dist(\cdot)$ is a Riemannian metric on $\X$ and
the function $g:[0,+\infty)\rightarrow {\mathbb
R\cup\{+\infty\}}$ is such that $0$ is a global maximum ($g(0)$ may
be $+\infty$); $g$ is a strictly decreasing bijection $g:V \to W$
in a neighbourhood $V$ of
$0$; and has one of the three types of described in \cite[Section~1.1]{FFT10} or \cite[Section~3.1]{FFT12}. 

Assume further that $\zeta$ is a \emph{repelling $p$-periodic point}, which means that $f^p(\zeta)=\zeta$, $f^p$ is differentiable at $\zeta$ and $0<\left|\det D(f^{-p})(\zeta)\right|<1$. As shown in \cite[Theorem~5]{FFT12}, the regularity of $\mu_\phi$ and $\varphi$  guarantee that conditions \ref{item:U-ball}  and \ref{item:repeller}  of Section~\ref{subsec:point-processes} hold. Moreover, the EI is given by the formula $\theta=1-\e^{S_p\phi(\zeta)}$.

Finally, since $Q_{p,0}^{\kappa_1}(u_n)$ is the union of two intervals, for all $\kappa_1$, we have that $\|\I_{Q_{p,0}^{\kappa_1}(u_n)} \|_{BV}\leq 5$, which means all the assumptions of Theorem~\ref{thm:decay-L1} are verified and Corollary~\ref{cor:Rychlik poiss} holds.

\subsubsection{Piecewise expanding maps in higher dimensions}
\label{sssection:Saussol}
The second class of examples we consider here corresponds to a higher dimensional version of the piecewise expanding interval maps of the previous section. We refer to \cite[Section~2]{S00} for precise definition of this class of maps and give a very particular example corresponding to a uniformly expanding map on the 2-dimensional torus: 
\begin{list}{$\bullet$}
{ \itemsep 1.0mm \topsep 0.0mm \leftmargin=7mm}

\item let $\mathbb T^2=\R^2/\Z^2$ and consider the map $f:\mathbb T^2\to\mathbb T^2$ defined by the action of a $2\times2$ matrix with integer entries and eigenvalues $\lambda_1,\lambda_2>1$.

\end{list}
According to \cite[Theorem 5.1]{S00}, there exists an absolutely continuous invariant probability measure (a.c.i.p.) $\mu$. Also in \cite[Theorem 6.1]{S00}, it is shown that on the mixing components $\mu$ enjoys exponential decay of correlations against $L^1$ observables on $V_\alpha$, more precisely, if the map $f$ is as defined above and if $\mu$ is the mixing a.c.i.p., then there exist constants $C<\infty$ and $\gamma<1$ such that
\begin{equation*}\label{eq:doc-pen}
\Big|\int\psi\circ f^n\, h\, \dif\mu \Big|\leq C\|\psi\|_{L^1}\|h\|_\alpha\gamma^n,\, \forall\psi\in L^1,\mbox{ where }\int\psi\,\dif\mu=0 \textrm{ and } \forall h\in V_\alpha.
\end{equation*}
Assume that $\zeta$ is a Lebesgue density point with $0<\frac{d\mu}{d\l}(\zeta)<\infty$ and the observable $\varphi:\X\to\R\cup\{+\infty\}$ is of
the form \begin{equation}
\label{eq:observable-form} \varphi(x)=g(\dist(x,\zeta)),
\end{equation} where $\dist(\cdot)$ is a Riemannian metric on $\X$ and
the function $g:[0,+\infty)\rightarrow {\mathbb
R\cup\{+\infty\}}$ is such that $0$ is a global maximum ($g(0)$ may
be $+\infty$); $g$ is a strictly decreasing bijection $g:V \to W$
in a neighbourhood $V$ of
$0$; and has one of the three types of described in \cite[Section~1.1]{FFT10} or \cite[Section~3.1]{FFT12}. This guarantees that condition \ref{item:U-ball} of Section~\ref{subsec:point-processes} holds. Assume further that $\zeta$ is a \emph{repelling $p$-periodic point}, which means that $f^p(\zeta)=\zeta$, $f^p$ is differentiable at $\zeta$ and $0<\left|\det D(f^{-p})(\zeta)\right|<1$. Then condition \ref{item:repeller}  of Section~\ref{subsec:point-processes} holds and the EI is equal to $\theta=1-\left|\det D(f^{-p})(\zeta)\right|$ (see \cite[Theorem~3]{FFT12}) It is also easy to check that $\|\I_{Q_{p,0}^{\kappa_1}(u_n)} \|_{\alpha}$ is bounded by a positive constant, for all $\kappa_1$, which means that all conditions of Theorem~\ref{thm:decay-L1} are satisfied and thus Corollary~\ref{cor:saussol} holds.

\section{First return maps have the same statistics at periodic points as the original system}
\label{sec:BSTV}

In this section we will prove Theorem~\ref{thm:ind same}.  The conditions in that theorem concern the dynamical system $(\X,f)$ along with potentials $\phi:\X\to \R$, their conformal measures and equilibrium states.  Moreover, the observable $\varphi:\X\to [-\infty, \infty]$ must behave reasonably well.  We assume that $\X$ is a topological space with a Riemannian 
metric that we denote  by $\dist(\cdot)$. We also let $B_\eta(\zeta)$ denote an open ball of radius $\eta$ centred at $\zeta$. Moreover,
we will assume:
 
 \begin{itemize}
\item \textbf{(M1)} $P(\phi)=0$.

\item \textbf{(M2)}  There exists a finite $\phi$-conformal measure $m$ and an equilibrium state $\mu=\rho m$ with density $\rho:\X\to [1/C,C]$, some $C>0$.

\end{itemize}

Note that if $P(\phi)$ is non-zero, but finite, then $\phi$ can be replaced by $\phi-P(\phi)$ in order to satisfy (M1).  
We also require our system $(X, f, \phi)$, as well as our observable $\varphi$ to behave well around our point of interest.  We will fix a point $\zeta\in X$ where $f^p(\zeta)=\zeta$ where $p$ is the prime period.  We assume:

\begin{itemize}
\item \textbf{(S1)} 
For each $\eps_0\in (0,1)$, there exists $u_0<u_F$ such that $U(u)$ is a topological ball for any $u\in [u_0-\eps, u_F)$ and such that for each $u\in (u_0,u_F)$, there exists $n=n(u_0, u)\in \N$ and $\eps\in (0,\eps_0)$ with
$$U(u_0+\eps)\subset f^{np}(U(u)) \subset U(u_0-\eps).$$

\item \textbf{(S2)} For some $\delta>0$, $f^p|_{B_\delta(\zeta)}$ is a bijection onto its image and  \begin{equation*}
\sup_{|x-\zeta|<\delta}\sum_{n=0}^\infty |\phi(x_n)-\phi(\zeta)|<\infty
\label{eq:smooth}
\end{equation*}
where $x_n\in B_\delta(\zeta)$  are such that $f^{pn}(x_n)=x$.  Moreover, $\phi(\zeta)<0$.

\end{itemize}

\begin{remark}
A natural example where these conditions hold is the following.  Suppose that $\X$ is an interval $I$ and $f^p:\X\to \X$ is a $C^1$ expanding map at $\zeta$ and $\varphi$ is a potential as in \eqref{eq:observable-form} (similarly if $\X$ is a subset of the complex plane and  $f^p$ is holomorphic at $\zeta$ with $|Df^p(\zeta)|>1$).  Then (S1) holds.
In these two cases condition (S2) holds if  $\phi:I\to \R$ is locally H\"older at $\zeta$, a particular example is if $\phi(x)=-\log|Df(x)|$.
\end{remark}

Before embarking on the proof of  Theorem~\ref{thm:ind same}, we prove a lemma which demonstrates how, given (S1) and (S2), the conformal measure scales around $\zeta$.  

\begin{lemma}
Suppose that $(\X, f, \phi)$ is a dynamical system satisfying (M1), (M2), (S1) and (S2).  Then for any  $u_0<u_F$ as in (S1), for $u\in (u_0,u_F)$ and $n$ as in (S1),  $$m_\phi(U(u))\asymp e^{nS_p\phi(\zeta)} m_\phi(U(u_0)).$$
\label{lem:ball meas}
\end{lemma}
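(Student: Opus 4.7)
The plan is to use conformality of $m_\phi$ to pull the measure of $f^{np}(U(u))$ back to $U(u)$, identify the Jacobian as essentially $e^{-n S_p\phi(\zeta)}$ via a bounded distortion argument, and match the image measure to $m_\phi(U(u_0))$ via the sandwich condition in (S1).

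First I would fix $u$ close enough to $u_F$ so that $U(u) \subset B_\delta(\zeta)$ (and all intermediate iterates $f^{kp}(U(u))$, $k = 0, \ldots, n-1$, stay in $B_\delta(\zeta)$ by the expanding/bijective property of $f^p$ near $\zeta$ from (S2)). Then $f^{np}|_{U(u)}$ is a bijection onto its image, and by (M2) together with the $\phi$-conformality of $m_\phi$,
\begin{equation*}
m_\phi\!\left(f^{np}(U(u))\right) \;=\; \int_{U(u)} e^{-S_{np}\phi(x)}\, dm_\phi(x).
\end{equation*}
By (S1), the image is sandwiched between $U(u_0+\eps)$ and $U(u_0-\eps)$, and by the continuity statement in (R1) together with the density bound in (M2), $u \mapsto m_\phi(U(u))$ is continuous at $u_0$. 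Fixing $\eps_0$ small enough, this sandwich yields $m_\phi(f^{np}(U(u))) \asymp m_\phi(U(u_0))$, with implicit constants independent of $u$ and $n$.

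The heart of the proof is the bounded distortion estimate
\begin{equation*}
\sup_{x \in U(u)}\, \bigl|S_{np}\phi(x) - n\, S_p\phi(\zeta)\bigr| \;\le\; C',
\end{equation*}
uniform in $n$. By $p$-periodicity, $S_{np}\phi(\zeta) = n S_p\phi(\zeta)$, so I rewrite the difference by grouping the $np$ terms into $p$ arithmetic progressions modulo $p$:
\begin{equation*}
S_{np}\phi(x) - S_{np}\phi(\zeta) \;=\; \sum_{j=0}^{p-1}\sum_{k=0}^{n-1}\bigl(\phi(f^j(f^{kp}(x))) - \phi(f^j(\zeta))\bigr).
\end{equation*}
For $j=0$, the inner sum $\sum_{k=0}^{n-1}|\phi(f^{kp}(x)) - \phi(\zeta)|$ is exactly the tail sum appearing in (S2): setting $x^\ast := f^{np}(x) \in U(u_0-\eps)$, the points $f^{kp}(x)$ are the $f^p$-preimages of $x^\ast$ that remain in $B_\delta(\zeta)$, so (S2) bounds this sum uniformly in $n$. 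For each $j = 1, \ldots, p-1$, one applies the analogue of (S2) at the periodic point $f^j(\zeta)$ (since each $f^j(\zeta)$ is itself $p$-periodic and the hypothesis is naturally propagated to finitely many points on the orbit); this gives a uniform bound for each $j$, and summing over the finite range $j=0,\ldots,p-1$ yields the claimed constant $C'$.

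Combining the three ingredients,
\begin{equation*}
m_\phi(U(u_0)) \;\asymp\; \int_{U(u)} e^{-S_{np}\phi}\, dm_\phi \;\asymp\; e^{-n S_p\phi(\zeta)}\, m_\phi(U(u)),
\end{equation*}
and rearranging gives $m_\phi(U(u)) \asymp e^{n S_p\phi(\zeta)} m_\phi(U(u_0))$, as claimed. The main obstacle is the bounded distortion step: one must turn (S2), stated only at $\zeta$, into a uniform control along the forward $f^p$-orbit of every point in $U(u)$ up to time $n$, for arbitrarily large $n$. The periodicity makes this manageable because the orbit splits into $p$ copies of a backward-contracting preimage orbit entering a neighbourhood of a periodic point, but one must verify that (S2)-type summability holds at each of the finitely many periodic points $f^j(\zeta)$, or reduce to the single-point statement by carrying the periodic excursions of length $p$ through continuity of $\phi$.
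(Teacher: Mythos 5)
Your proposal is correct and follows essentially the same route as the paper: write $m_\phi(f^{np}(U(u)))=\int_{U(u)}e^{-S_{np}\phi}\,dm_\phi$ by conformality, use the summability in (S2) along the backward $f^p$-orbit to replace $S_{np}\phi$ by $nS_p\phi(\zeta)$ up to a uniform constant, and then use the sandwich $U(u_0+\eps)\subset f^{np}(U(u))\subset U(u_0-\eps)$ from (S1) to identify the image measure with $m_\phi(U(u_0))$. The extra care you take with the residues $j=1,\ldots,p-1$ (needing (S2)-type control at each $f^j(\zeta)$ when $p>1$) concerns a point the paper's one-line appeal to (S2) glosses over — in effect the paper reads (S2) at the level of $S_p\phi$ — so it is a legitimate refinement rather than a divergence in method.
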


Note that in the case that $f$ is a $C^1$ interval map, $\varphi=-\dist(\cdot, \zeta)$ and $\phi=-\log|Df|$, with $m_\phi$ being Lebesgue measure, this lemma implies the elementary fact that for small $\delta>0$ the Lebesgue measure of the ball $B_{\lambda\delta}(\zeta)$ for $\lambda=|Df^p(\zeta)|^{-n}$ is approximately the same as $|Df^p(\zeta)|^{-n}$ times the Lebesgue measure of $B_\delta(\zeta)$. 

\begin{proof} We fix $u_0<u_F$ and $\eps_0>0$ compatible with (S1).  (S2) implies that for $u$ close to $u_F$,
$$m_\phi\left(f^{np}(U(u))\right)=\int_{U(u)}e^{-S_{np}\phi}~dm_\phi\asymp \int_{U(u)} e^{-n S_p\phi(\zeta)}~dm_\phi= e^{-nS_p\phi(\zeta)} m_\phi(U(u)).$$
Since  (S1) implies that there exists $\eps\in (0,\eps_0)$ such that for $u\in(u_0, u_F)$ there exists $n\in \N$ such that $U(u_0+\eps)\subset f^{np}(U(u)) \subset U(u_0-\eps)$ , treating all such sets as having approximately the same measure, we obtain
$m_\phi(U(u))\asymp e^{nS_p\phi(\zeta)} m_\phi(U(u_0))$, as required.
\end{proof}

\begin{proof}[Proof of  Theorem~\ref{thm:ind same}]
For this proof, we will drop the $\phi$ subscript for $\mu, \ m$ and $\rho$.
Recall that $\hat\mu=\mu_{\hat \X}$ and for any $V\subset \hat \X$, the function  $\hat r_V$ is the first hitting time to $V$ by $\hat f$. 

We will fix some scale $u_0<u_F$ satisfying (S1) and define $\nn_u$ and $v$ as in \eqref{eq:def-REPP}. 

We will prove the theorem for returns rather than hits, that is, we consider
\begin{equation*}
\lim_{u\to u_F} \mu_{U(u)}\left(\left\{\nn_u(v J)= \kappa\right\}\right)
\label{eq:Pois_first}
\end{equation*}
rather than $\lim_{u\to u_F} \mu\left(\left\{\nn_u(v J)= \kappa\right\}\right)$. The reason we do so is because, in this setting, we start on $U(u)$ for both the induced and the original map and this helps to obtain a relation between  induced return times and return times to $U(u)$. If we were to consider hitting times rather than return times then we would have to start on $\hat \X$ or on $\X$ depending on whether we were considering the induced map or the original map, respectively. 

Our hypothesis is that, for every $J\in\mathcal S$ and $\kappa\in\N_0$,
\[
\lim_{u\to u_F} \hat\mu_{U(u)}\left(\left\{\hat\nn_u(\hat v J)\leq \kappa\right\}\right)
=\Law(J,\kappa),
\]
where $\Law$ is such that $\Law(J,\cdot)$ corresponds to a d.f. of an integer valued r.v., $\lim_{\delta\to0}\Law((1\pm\delta)J,\kappa)=\Law(J,\kappa)$, for every $\kappa$. 
It is easy to check that, for the limiting compound Poisson process with a cluster at time $0$, we have
\begin{equation}
\label{eq:relation-process-return}
\lim_{t\to 0}\Law([0,t),1)=\lim_{u\to u_F}\frac{\mu(Q^0(u))}{\mu(U(u))}.
\end{equation}
We want to show that, for every $J\in\mathcal S$ and $\kappa\in\N_0$,
\[
\lim_{u\to u_F} \mu_{U(u)}\left(\left\{\nn_u(v J)\leq \kappa\right\}\right)
=\Law(J,\kappa).
\]
This relation between the induced and the original return times point processes can be converted back to hitting times by applying the results in \cite{CK06,HLV07}.\footnote{The relation in \eqref{eq:HTS-RTS} between the distribution of the first hitting time and the first return time was extended in  \cite{CK06} to the $k$-th hitting and return time, which were related by a similar integral equation. Then, based on this result, in \cite{HLV07} a similar relation between the limits for the Hitting and Return Times point processes was proved.} 

Let
$$E_n(x):=\frac1n\sum_{i=0}^{n-1}r_{\hat \X}\circ\hat f^i(x).$$  For $\mu$-a.e. $x\in \hat \X$,
$$E_n(x)\to c:=\int_{\hat \X}r_{\hat \X}~d\mu=\frac1{\mu(\hat \X)}$$
where the final equality follows from Kac's Theorem.

For $\mu$-a.e. $x\in \hat \X$, there exists a finite number $j(x, \eps)$ such that  $|E_n(x)-c|<\eps$ for all $n\ge j(x, \eps)$.  Let $\tilde G_{n}^\eps:=\{x\in \hat \X:j(x,\eps)<n\}$.  Moreover, we define $N=N(\eps)$ to be such that $\hat\mu(\tilde G_N^\eps)>1-\eps$.

Since $$\left|\sum_{i=0}^{n-1}r_{\hat \X}(\hat f^i(x))-cn\right|<\eps n \text{ for } x\in \tilde G_N^\eps \text{ and } n\ge N,$$
for all such $n$, there exists $s=s(x)$ with $|s|<\eps n$ such that $\hat f^n(x)=f^{cn+s}(x)$.  Then we have
$$r_{U(u)}(x)=c\hat r_{U(u)}(x)+s$$
for some $|s|<\eps\hat r_{U(u)}(x)$ whenever $\hat r_{U(u)}(x)\ge N$ and $x\in \tilde G_N^\eps$.

We again use the notation for the hierarchy of balls and annuli around $\zeta$ denoted $U^\kappa(u)$ and $Q^\kappa(u)$, respectively, where $U^0(u)$ corresponds to an exceedance of the threshold $u$.    

For $\eps, {\eta}_0>0$,
$$ G_{N,n}^\eps=G_{N,n}^{\eps, u_0}:=\left\{x\in Q^n(u_0): f^{np}(x)\in G_N^\eps\right\} .$$
We use the idea that we can fix some scale $u_0$ and have $G_N^\eps$ `sufficiently dense' in $Q^0(u_0)$, and then pull back by $n(u_0, u)$ (as in (S1)) steps to get points in $G_{N, n}^\eps$ being sufficiently dense in $Q^{0}(u)$.  We let  $G_{N, n}=G_{N, n}^\eps$. 

\begin{lemma}
There exist $\gamma, u_1>0$ depending on (M2), (S1) and (S2) such that:
\begin{enumerate}
\item   For each $\epsilon>0$ and $u\in [u_1,u_F)$, there exists $N\in \N$ such that $\mu_{Q^0(u)}(G_N)>1-\epsilon$.
\item Fix $u_0<u_F$ and $\eps_0>0$ satisfying (S1) and so that $u_0-\eps_0\geq u_1$.  If $\epsilon>0$ and $u_0$ and $N$ are as in (1), then for each $u\in (u_0,u_F)$, for $n$ given by (S1), 
  $$\mu_{Q^0(u)}\left(G_{N,n}\right)>1-\gamma\epsilon.$$
\end{enumerate}
\label{lem:large scale alt}
\end{lemma}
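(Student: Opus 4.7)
The plan is to treat part (1) directly on the induced space and then transfer the resulting density estimate to the smaller scale in part (2) by pushing everything through the bijection $f^{np}\colon Q^n(u_0)\to Q^0(u_0)$, using $\phi$-conformality of $m$ and (S2) to control the Jacobian.

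For part (1), first fix $u_1<u_F$ small enough that $Q^0(u)\subset \hat \X$ for every $u\in[u_1,u_F)$; this is possible since $\zeta\in\hat \X$ and $U(u)$ shrinks to $\{\zeta\}$. For each such $u$ the quantity $\mu(Q^0(u))$ is a fixed positive constant. Kac's lemma makes $r_{\hat \X}$ $\hat\mu$-integrable, so the Birkhoff theorem applied to $\hat f$ with observable $r_{\hat \X}$ makes $j(\cdot,\eps)$ finite $\hat\mu$-a.e. Consequently $G_N:=\tilde G_N^\eps$ increases to $\hat \X$ up to a $\hat\mu$-null set, and $\mu(\hat \X\setminus G_N)\to0$. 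Choosing $N=N(u,\eps)$ large enough that $\mu(\hat \X\setminus G_N)<\eps\,\mu(Q^0(u))$ gives $\mu_{Q^0(u)}(G_N)>1-\eps$, because $G_N^c\cap Q^0(u)\subset \hat \X\setminus G_N$.

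For part (2) I would first set up the geometry. Given $u\in(u_0,u_F)$, (S1) provides $\eps'\in(0,\eps_0)$ and $n=n(u_0,u)$ with $U(u_0+\eps')\subset f^{np}(U(u))\subset U(u_0-\eps')$. By (S2) $f^p$ has an inverse branch $g$ fixing $\zeta$, so this translates to $g^n(U(u_0+\eps'))\subset U(u)\subset g^n(U(u_0-\eps'))$ and, after taking annular differences, to an analogous sandwich of $Q^0(u)$ between two approximations of $Q^n(u_0)=g^n(Q^0(u_0))$. Combining Lemma~\ref{lem:ball meas} (which controls how conformal measure scales between $U(u_0\pm\eps')$) with (M2) to pass from $m$ to $\mu$, one can arrange, by shrinking $\eps_0$ from the outset, that
\[ \mu\bigl(Q^0(u)\triangle Q^n(u_0)\bigr)\le \eps\,\mu(Q^n(u_0)). \]
Next, the ring dynamics $f^p(Q^{k+1}(u_0))=Q^k(u_0)$ together with (S2) make $f^{np}\colon Q^n(u_0)\to Q^0(u_0)$ a bijection sending $G_{N,n}$ onto $G_N\cap Q^0(u_0)$, so $\phi$-conformality of $m$ yields
\[ m(Q^0(u_0)\setminus G_N)=\int_{Q^n(u_0)\setminus G_{N,n}} e^{-S_{np}\phi}\,dm. \]
The summability bound in (S2) makes $e^{-S_{np}\phi(x)}\asymp e^{-nS_p\phi(\zeta)}$ uniformly for $x\in Q^n(u_0)\subset B_\delta(\zeta)$, and applying the same estimate with $G_N$ replaced by $\es$ gives $m(Q^n(u_0))\asymp e^{nS_p\phi(\zeta)}m(Q^0(u_0))$. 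Dividing and using (M2) once more,
\[ \mu_{Q^n(u_0)}(G_{N,n}^c)\le C'\,\mu_{Q^0(u_0)}(G_N^c)<C'\eps \]
by part (1) at scale $u_0$, where $C'$ depends only on $C$ and the oscillation constant from (S2). Combining with the symmetric-difference bound above produces $\mu_{Q^0(u)}(G_{N,n})>1-\gamma\eps$ for $\gamma$ depending only on these same data.

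The main obstacle I expect is the geometric bookkeeping: one must make $Q^0(u)$ genuinely interchangeable with the dynamically natural $Q^n(u_0)$ for $\mu$-estimation purposes, and this uniformly as $u\nearrow u_F$ (hence $n\to\infty$), because otherwise the boundary annulus could be large relative to $Q^n(u_0)$ itself and the pullback through $f^{np}$ loses control. The slack $\eps'$ in (S1), the quasi-geometric scaling from Lemma~\ref{lem:ball meas}, and the uniform oscillation bound of (S2) combine to keep this boundary piece below any prescribed fraction of $\mu(Q^n(u_0))$, which is what ultimately lets the argument close.
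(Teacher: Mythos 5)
Your proposal follows essentially the same route as the paper: part (1) is the ergodic theorem, and part (2) transports the scale-$u_0$ density estimate down to scale $u$ by pushing through $f^{np}$, using $\phi$-conformality of $m$, the (S2) bound to replace $e^{-S_{np}\phi}$ by $e^{-nS_p\phi(\zeta)}$ up to uniform constants, the resulting scaling $m(Q^0(u))\asymp e^{nS_p\phi(\zeta)}m(Q^0(u_0))$, and (M2) to pass between $m$ and $\mu$ — this is precisely the paper's computation, with $\gamma$ a product of the distortion and density constants ($C_\phi^4C^4$ in the paper). The one point where you deviate is the handling of the mismatch between $Q^0(u)$ and $Q^n(u_0)$: you propose to force $\mu\bigl(Q^0(u)\triangle Q^n(u_0)\bigr)\le\epsilon\,\mu(Q^n(u_0))$ by "shrinking $\eps_0$ from the outset", but in the lemma $\eps_0$ is fixed before $\epsilon$, so that choice cannot depend on $\epsilon$; the paper instead absorbs the (S1)-window error into the multiplicative constants and, for the exact identification of the sets, passes to $\tilde Q^i(u_0)=f^{np}(Q^i(u))$ and $\tilde G_{N,n}$ in the remark immediately following the lemma, which is the version actually used afterwards. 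Apart from this quantifier-order wrinkle, which the paper itself only settles via that remark, your argument coincides with the paper's.
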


\begin{proof}
We fix some $u_0<u_F$ and $\eps_0>0$  as in (S1).  Let $u$ be any value in $[u_0-\eps_0, u_F)$.  The fact that we can choose $N$ large enough that $\mu_{Q^0(u)}(G_N^c)\le \epsilon$ follows from the ergodic theorem, so (1) is immediate.  

The bound $C$ on the density $\rho$ given in (M2) implies that $\frac{m_\phi(G_N^c\cap Q^0(u_0))}{m_\phi(Q^0(u_0))}\le C^2\epsilon$.  By (S1), $U(u_0+\eps_0)\subset f^{np}(U(u)) \subset U(u_0-\eps_0)$.    By (1) and (S2) there exists $C_\phi>0$ such that
\begin{align*}
m_\phi(G_N^c\cap Q^0(u_0)) &= C_\phi^\pm \int_{f^{-np}(G_N^c)\cap Q^0(u)}e^{-S_{np}\phi}~dm_\phi\\
& =C_\phi^{\pm 2} m_\phi\left(f^{-np}(G_N^c)\cap Q^0(u)\right)e^{-S_{np}\phi(\zeta)}\\
&=C_\phi^{\pm 2}  m_\phi\left(G_{N,n}^c\cap Q^0(u)\right)e^{-S_{np}\phi(\zeta)}.
\end{align*}

Similarly \begin{equation*}
m_\phi(Q^0(u))= C_\phi^{\pm 2}  m_\phi(Q^0(u_{0})) e^{S_{np}\phi(\zeta)}.\label{eq:Br_n}
\end{equation*}
Hence $\mu_{Q^0(u)}(G_{N,n}^c)<C_\phi^4C^4\epsilon$.
Putting these facts together, along with the density bound, implies that the lemma holds  with $\gamma=C_\phi^4C^4$. 
\end{proof}

In the proof of this lemma, we implicitly used the set $\tilde Q^0(u_0)$ which we now define as the $f^{np}(Q^0(u))$.  Moreover, we set $\tilde Q^i(u_0):=f^{np}(Q^i(u))$ (note that for $i>n$, $\tilde Q^i(u_0)=Q^{i-n}(u)$), and $\tilde G_{N, n}=\left\{x\in \tilde Q^n(u_0): f^{np}(x)\in G_N^\eps\right\}$.  Note that the lemma also holds for these sets.

For each $x\in\{\nn_u(v J)(x)=\kappa\}$, let $\alpha(\kappa)(x)$ denote the number of escaping returns among the total amount of returns, $\kappa\in\N_0$, i.e., returns to $Q^0(u)$. This means that there exist $\alpha(\kappa)$ clusters, with sizes: $\beta_1,\ldots,\beta_{\alpha(\kappa)}$, such that $\beta_1+\ldots+\beta_{\alpha(\kappa)}=\kappa$. Also, for any $x\in U(u)$ and $i=2,\ldots,\kappa$, let $r_{U(u)}^{(i)}(x):= r_{U(u)}(f^{r_{U(u)}^{(i-1)}})(x)$, where $r_{U(u)}^{(1)}(x)=0$, since we start on $U(u)$. Observe that the escaping returns correspond to the indices $i$ for which there exist $j$ such that $i=\beta_1+\ldots+\beta_j$. Also note that we can split the returns in two classes: 
\begin{itemize}
\item the first one corresponds to the \emph{intra-cluster} returns: one of a series of returns ending with the respective escaping return, i.e.,  corresponding to the indices $i$ for which there exists $1\leq j\leq \alpha(\kappa)-1$ such that $\beta_1+\ldots+\beta_j+1<i\leq\beta_1+\ldots+\beta_{j+1}$;

\item  the second one refers to the \emph{inter-cluster} returns: one of the returns in the time gap between two clusters, \ie corresponding to the indices  $i$ for which there exists $1\leq j\leq \alpha(\kappa)-1$ such that $i=\beta_1+\ldots+\beta_j+1$. We shall denote these inter-cluster returns by $i_j:=\beta_1+\ldots+\beta_j+1$, with $1\leq j\leq \alpha(\kappa)-1$.

\end{itemize}

For the induced map, we can define $\hat r_{U(u)}^{(i)}$ in the natural way. Our goal is to relate the the return times $r_{U(u)}^{(i)}$ of the original system with the corresponding ones of the induced map. We recall that our choice of $\hat X$ was such that $\zeta$ is the only point of its orbit belonging to $\hat X$. Hence, for the intra-cluster returns the relation is obvious:
\begin{equation}
\label{eq:intra-cluster}
r_{U(u)}^{(i)}=\hat r_{U(u)}^{(i)}+(p-1).
\end{equation}
The hard part is to get a relation for the intra-cluster returns. For these we will use the ergodic theorem to show that $r_{U(u)}^{(i_j)}$ is approximately $c\hat r_{U(u)}^{(i_j)}$, where $c$ is the expected first return time to $\hat \X$.

Observe that in order to have a cluster of size $\beta$, there must be an entrance in $\tilde Q^{n+\beta}(u_0)$. For every $i=1,\ldots, \alpha(\kappa)$, we define $\beta_i^*=n+\beta_i$ and write:
\begin{equation*}
\{x\in U(u): \nn_u(v J)(x)=\kappa\}=\bigcup_{\Small\begin{tabular}{c}
$0\leq\alpha(\kappa)\leq \kappa$\\
$\sum_{j=1}^{\alpha(\kappa)}\beta_j=\kappa$
\end{tabular}} U(u)\cap \{\nn_u(v J)=\kappa\}\cap \left(\bigcap_{j=1}^{\alpha(\kappa)}f^{-r_{U(u)}^{(i_j)}}\left(\tilde Q^{\beta_{j+1}^*}(u_0)\right)\right)
\end{equation*}

Now, for each $j=1,\ldots,\alpha(k)-1$, we may split $\tilde Q^{\beta_j^*}(u_0)$ in the following way:
\begin{equation*}
\tilde Q^{\beta_j^*}(u_0)= \left(\tilde Q^{\beta_j^*}(u_0) \cap \tilde G_{N, \beta_j^*}\cap \{\hat r_{U(u)}\circ f^{\beta_j^*p}\ge N\}\right)\cup\left(\tilde Q^{\beta_j^*}(u_0)\cap\left( \tilde G_{N,\beta_j^*}^c\cup \{\hat r_{U(u)}\circ f^{\beta_j^*p}< N\}\right)\right)
\end{equation*}

Using Lemma~\ref{lem:large scale alt}, for any $\epsilon>0$, we can choose $N$ sufficiently large so that 
\begin{equation}\label{eq:GNc}
\mu_{U(u)}\left(\tilde Q^{\beta_j^*}(u_0)\cap\tilde G_{N,\beta_j^*}^c\right)\leq\mu_{\tilde Q^{\beta_j^*}(u_0)}\left(\tilde G_{N,\beta_j^*}^c\right)<\epsilon,
\end{equation}
independently of $n$ and $\beta_i$.

Moreover, for  $x\in \tilde Q^{\beta_j^*}(u_0)$, since $f^{\beta_jp}(x)\in Q^0(u)$, we have $r_{U(u)}\circ f^{\beta_jp}(x)=r_{U(u)}\circ f^{\beta_j^*p}(x)+pn$ (alternatively $\hat r_{U(u)}\circ f^{\beta_jp}(x)=\hat r_{U(u)}\circ f^{\beta_j^*p}(x)+n$).  Therefore using the idea of Lemma~\ref{lem:large scale alt} again,
\begin{align*}
\mu_{\tilde Q^{\beta_j^*}(u_0)}\left( \{\hat r_{U(u)}\circ f^{\beta_j^*p}\geq N\}\right) &\sim \mu_{
Q^0(u)}\left( \left\{\hat r_{U(u)}> N+n\right\}\right)
\end{align*}
Using the facts that $\mu(U(u))(N+n)\asymp ne^{nS_p(\zeta)}$ where $n=n(u_0, u)$ as $n\to \infty$ as $u\to u_F$, $S_p(\zeta)<0$ and \eqref{eq:relation-process-return} it follows that
\begin{align*}
\lim_{u\to u_F}\mu_{{Q^0(u)}}\left( \left\{\hat r_{U(u)}>N+n\right\}\right)&=\lim_{u\to u_F}\mu_{U(u)}\left( \left\{\hat r_{U(u)}> \frac{\mu(U(u))(N+n)}{\mu(U(u))}\right\}\right) \lim_{u\to u_F}\frac{\mu(Q^0(u))}{\mu(U(u))}\\
&=\lim_{t\to 0} \Law([0,t),1) \lim_{u\to u_F}\frac{\mu(Q^0(u))}{\mu(U(u))}=1.\end{align*}
Therefore for every $\epsilon>0$ for $u$ close enough to $u_F$, we have that 
\begin{equation}
\label{eq:N}
\mu_{U(u)}\left(\tilde Q^{\beta_j^*}(u_0)\cap \{\hat r_{U(u)}\circ f^{\beta_j^*p}< N\}\right)\leq\mu_{\tilde Q^{\beta_j^*}(u_0)}\left(\{\hat r_{U(u)}\circ f^{\beta_j^*p}< N\}\right) <\epsilon.
\end{equation}
Consider the event
$$
E(\kappa,u,J):= \hspace{-3mm} \bigcup_{\tiny\begin{tabular}{c}
$0\leq\alpha(\kappa)\leq \kappa$\\
$\sum_{j=1}^{\alpha(\kappa)}\beta_j=\kappa$
\end{tabular}}
\hspace{-4mm} \{\nn_u(v J)=\kappa\}\cap \left(\bigcap_{j=1}^{\alpha(\kappa)}f^{-r_{U(u)}^{(i_j)}}\left(\tilde Q^{\beta_{j}^*}(u_0)\cap \tilde G_{N, \beta_j^*}\cap \{\hat r_{U(u)}\circ f^{\beta_j^*p}\ge N\}\right)\right)
$$
Since $\alpha(\kappa)\leq \kappa$, by stationarity, \eqref{eq:GNc} and \eqref{eq:N}, for $N$ sufficiently large  and $r$ sufficiently small we have 
\begin{multline}
\Big|\mu_{U(u)}(\nn_u(v J)=\kappa)-\mu_{U(u)} \left(E(\kappa,u,J)\right)\Big|\\ \leq \kappa\mu_{U(u)}\left(\tilde Q^{\beta_j^*}(u_0)\cap \tilde G_{N,\beta_j^*}^c\right)+\kappa\mu_{U(u)}\left(\tilde Q^{\beta_j^*}(u_0)\cap \{\hat r_{U(u)}\circ f^{\beta_j^*p}< N\}\right)\leq 2\kappa\epsilon
\label{eq:approx1linha}
\end{multline}

Moreover, for $x\in E(\kappa,u,J)$, by definition of $\tilde G_{N,0}$, it follows that for every $j=1,\ldots, \alpha(\kappa)-1$, there exists $|s_j|<\eps \hat r_{U(u)}^{(i_j)}(x)$ such that
\begin{equation}\label{eq:inter-cluster}r_{U(u)}^{(i_j)}(x)=c\hat r_{U(u)}^{(i_j)}(x)+s_j.\end{equation}  
Since $\hat v=v/c$, from \eqref{eq:inter-cluster}, we easily get that for $x\in E(\kappa,u,J)$ and every $j=1,\ldots, \alpha(\kappa)-1$
\begin{equation}
\label{eq:rel1}
r_{U(u)}^{(i_j)}(x)\in v_\eta J\quad\Rightarrow\quad \hat r_{U(u)}^{(i_j)}(x)\in\hat v(1+\eps/c)J
\end{equation}
and
\begin{equation}
\label{eq:rel2}
\hat r_{U(u)}^{(i_j)}(x)\in\hat v(1-\eps/c)J\quad\Rightarrow\quad r_{U(u)}^{(i_j)}(x)\in v J.
\end{equation}
Besides, since for $u$ sufficiently close to $u_F$, we have $p-1\leq\hat v\eps/c J_{\sup}$, then, by \eqref{eq:intra-cluster}, relations \eqref{eq:rel1} and \eqref{eq:rel2} hold for all $i=1,\ldots,\kappa$. Hence,
$$
\mu_{U(u)}(\hat\nn_u(\hat v(1-\eps/c)J)=\kappa)\leq\mu_{U(u)} \left(E(\kappa,u,J)\right)\leq\mu_{U(u)}(\hat\nn_u (\hat v (1+\eps/c)J)=\kappa).
$$
Recalling that $\mu_{U(u)}=\hat\mu_{U(u)}$, taking limits as $u\to u_F$, by hypothesis, we get that $$\Law((1-\eps/c)J,\kappa) -\Law((1-\eps/c)J,\kappa-1)\leq\mu_{U(u)} \left(E(\kappa,u,J)\right)\leq \Law((1+\eps/c)J,\kappa) -\Law((1+\eps/c)J,\kappa-1).$$ 

Finally, using \eqref{eq:approx1linha} and that $\lim_{\delta\to0}\Law((1\pm\delta)J,\kappa)=\Law(J,\kappa)$, we get for all $\kappa\in\N_0$
$$
\lim_{u\to u_F}\mu_{U(u)}(\nn_u (v J)=\kappa)=\Law(J,\kappa).
$$

Moreover, the limit above can be shown to be uniform in $\kappa$. To that end, notice that
\begin{align*}
\mu_{U(u)}(\nn_u (v J)>\kappa)&\leq \mu_{U(u)}\left(\nn_u (v [0,J_{\sup}))>\kappa\right)\\
&\leq\mu_{U(u)}\left(\hat \nn_u (v [0,J_{\sup}))>\kappa\right)\\
&=\mu_{U(u)}\left(\hat \nn_u(\hat v c[0,J_{\sup}))>\kappa\right)\xrightarrow[u\to u_F]{}1- \Law\left([0,cJ_{\sup}),\kappa\right)\xrightarrow[\kappa\to\infty]{}0.
\end{align*}
This implies that we can choose $K(\eps)$ such that $\mu_{U(u)}\left(\nn_u (v J)>K(\eps)\right)<\eps$. Then we only have to consider that $N$ is sufficiently large and $u$ is sufficiently close to $u_F$ so that $\epsilon$ from \eqref{eq:GNc} and \eqref{eq:N} is such that $\epsilon<\eps/K(\eps)$ to conclude that approximation \eqref{eq:approx1linha} does not depend on $\kappa$ anymore. 
\end{proof}

\section{Extending Poisson statistics at periodic points beyond systems with good first return maps}

Given  \cite[Proposition 1]{FFT11} and Theorem~\ref{thm:ind same}, the proof of Theorem~\ref{thm:BrV new} is almost identical to that of \cite[Theorem 2]{BV03}, the main difference being that in that paper they were interested in $z$ being a typical point of $\mu$ (i.e., they needed the result to hold for $\mu$-a.e. $z\in I$), while here we are picking a specific point.  The only place where this issue arises in \cite{BV03} is in Lemma 4 of that paper, where the summability condition guarantees that \eqref{eq:dens} automatically holds at $\mu$-a.e. point.  This is why we need to add  \eqref{eq:dens}  to our assumptions.
Due to these strong similarities with a previous work, we only sketch the proof here.   Moreover, since the argument is the same for the Poisson statistics, we only focus on the first hitting time distribution.

\begin{remark}
Note that \cite[Lemma 10]{BT09} extends \cite[Lemma 4]{BV03} to remove the necessity of the summability condition.  However, the proof of that result used the fact that they were only interested in typical points of $\mu$ rather strongly, so that method seems unlikely to extend to our setting here.
\label{rmk:BruTod}
\end{remark}

The map $\hat f$ is not a first return map in this case; indeed no first return map will be a Rychlik map as in  \cite[Proposition 1]{FFT11}.  Instead the system is lifted to a Hofbauer extension/tower (see\cite{H80, K89} ): there is a countable collection of intervals $\D=\{D_k\}$ and a set $\tilde I=\sqcup_{D_k\in \D} D_k$ with a map $\tilde f:\tilde I \to \tilde I$ semiconjugate to $f$ by a projection map $\pi$, i.e., the following diagram commutes:
\[
\begin{array}{ccc}
\tilde I & \stackrel{\tilde f}{\rightarrow} & \tilde I \\
\pi \downarrow  & & \downarrow \pi \\
I &  \stackrel{f}{\longrightarrow} &
I
\end{array}
\]
By \cite{K89}, for any ergodic invariant probability measure $\nu$ on $I$ with positive entropy, there is an ergodic invariant probability measure $\tilde \nu$ on $\tilde I$ such that $\nu=\tilde\nu\circ\pi^{-1}$.

The system $(\tilde I, \tilde f)$ has a Markov structure which means that for any interval $\tilde{\mathcal U}$ compactly contained in a set $D\in \D$, the first return map to $F_{\tilde{\mathcal U}}:\cup_i\tilde{\mathcal U}_i\to \tilde{\mathcal U}$ is such that for each $i\in \N$, $\tilde F_{\tilde{\mathcal U}}:\tilde{\mathcal U}_i \to \tilde{\mathcal U}$ is a diffeomorphism (note that we can also choose $\tilde{\mathcal U}$ so that the sets $\{\tilde{\mathcal U}_i\}_i$ do not overlap).  Moreover, this map has bounded distortion: there exists $K\ge 1$ such that for any $i\in \N$, $\frac{DF_{\tilde{\mathcal U}} (x)}{DF_{\tilde{\mathcal U}}(y)} \le K$ for any $x,y\in \tilde{\mathcal U}_i$.  In \cite{BV03}, the set $\tilde{\mathcal U}$ is chosen to be a certain union of such intervals.   Defining $\mathcal U:=\pi(\tilde{\mathcal U})$, we let $F_\mathcal U(x)=\pi \circ F_{\tilde{\mathcal U}}(\tilde x)$ where $\pi(\tilde x)=x$ (the special way that $\tilde{\mathcal U}$ was chosen means that any such $\tilde x$ gives the same value).  Then as in \cite[Lemma 3]{BV03}, $F_\mathcal U$ is a Rychlik map.  Hence by \cite[Proposition 1]{FFT11} $F_\mathcal U$ has RTS $e^{-\theta t}$ at $\zeta$.  Since $F_\mathcal U$ is not, in general, a \emph{first} return map for all points in $\mathcal U$, we have to do a bit more work before concluding that the original map has the same statistics.

Let $Q^0 (u,\tilde{\mathcal U}):=\pi^{-1}(Q^0(u))\cap\tilde{\mathcal U}$.  Above we have shown that the distribution of  the normalised first return time $r_{Q^0 (u,\tilde{\mathcal U})}$ converges to $e^{-\theta t}$ as $\eta\to 0$.  Now for $x\in Q^0(u)$, let $\tilde r_{Q^0 (u,\tilde{\mathcal U})}(x)=r_{Q^0(u,\tilde{\mathcal U})}(\tilde x)$ where $\tilde x\in Q^0 (u,\tilde{\mathcal U})$ has $\pi(\tilde x)=x$ (again any such point suffices).
The analogy of \cite[Lemma 4]{BV03} in our case is the statement that
$$\mu_{Q^0(u)}\left(\left\{x:\tilde r_{Q^0(u,\tilde{\mathcal U})}(x) \neq r_{Q^0(u)}(x)\right\}\right)\to 0$$ as we shrink both $\eta$ to zero and our set $\tilde{\mathcal U}$ so that $\mathcal U=\pi(\tilde{\mathcal U})$ shrinks to $z$.  Thus the normalised distribution of $r_{Q^0(u)}$ also converges to $e^{-\theta t}$ as $\eta r\to 0$, as required.

\section{Exclusion of parameters for quadratic interval maps}
\label{sec:BC arg}

In this section we show how we can adapt the procedure of exclusion of parameters of Benedicks-Carleson for the quadratic map, in order to guarantee that condition \eqref{eq:dens} holds for periodic points on a positive Lebesgue measure set of parameters. We consider the quadratic family of maps $f_a:[-1,1]\to[-1,1]$, given by $f_a(x)=1-ax^2$ where $a<2$ is a parameter close to the value $a=2$, for which the orbit of the critical point $0$ ends up at the fixed point $-1$. For that reason we call $f_2$ the full quadratic map. In what follows will use $D$ to denote the derivative with respect to $x$, so, for example, $Df_a(x)=\frac{d (f_a(x))}{dx}=-2a x$.

Let $\zeta_2$ be a hyperbolic repelling point of $f_2$, i.e., there exists $q\in \N$ such that $f_2^q(\zeta_2)=\zeta_2$ and $|Df_2^q(\zeta_2)|>1$.  Then there exists $a_0<2$ such that for  $a\in (a_0, 2]$ there is a corresponding \emph{hyperbolic continuation} $\zeta_a$ such that $f_a^q(\zeta_a)=\zeta_a$,  $|Df_a^q(\zeta_a)|>1$ and $a\mapsto \zeta_a$ is analytic in $(a_0, 2)$.  The following theorem says that given such a point $\zeta_2\in (-1,1)$, not in the critical orbit we can find a large set of maps $f_a$ which have a hyperbolic continuation $\zeta_a$ of $\zeta_2$ and such that the density of the acip $\mu=\mu_a$ is bounded at $\zeta_a$.  (Note that the density of $\mu_2$ is clearly bounded at $\zeta_2$.)

\begin{theorem}
Let $\zeta_2$ be a repelling periodic point of $f_2$, of prime period $q\in\N$ and distinct from the critical orbit. Consider its hyperbolic continuation $\zeta_a$ for $a$ sufficiently close to the parameter value 2. Then there exists a positive Lebesgue measure set of parameters $\Omega_\infty=\Omega_\infty(\zeta_2)$, for which $f_a$ has an acip and such that condition \eqref{eq:dens} holds for $\zeta_a$, for all $a\in\Omega_\infty$. 
\label{thm:BC}
\end{theorem}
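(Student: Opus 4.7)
The plan is to adapt the classical Benedicks--Carleson parameter exclusion to produce a positive Lebesgue measure set $\Omega_\infty\subset(a_0,2)$ on which three properties hold simultaneously: (i) exponential growth along the critical orbit, $|Df_a^n(c_1)|\ge e^{c n}$ for some $c>0$ close to $\log 2$, where $c_1:=f_a(0)$; (ii) the usual avoidance of the critical point at a small rate $|f_a^n(c_1)|\ge e^{-\alpha n}$, which together with (i) yields the summability condition \eqref{eq:summable} and hence the existence of the acip $\mu_a$ via Nowicki--van Strien; and (iii) a quantitative avoidance of the hyperbolic continuation, $|f_a^n(c_1)-\zeta_a|\ge e^{-\beta n}$, for a small $\beta>0$ to be chosen much smaller than $c$. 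The first two properties are provided, verbatim, by the standard BC construction on a set $\Omega_{BC}$ of positive measure accumulating at $a=2$; the whole point of the theorem is to inject the new condition (iii) into the induction.

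First I would invoke hyperbolicity of $\zeta_2$ and the fact that $\zeta_2$ does not lie on the critical orbit of $f_2$ to conclude that, after shrinking $a_0$, the hyperbolic continuation $a\mapsto\zeta_a$ is analytic and Lipschitz with a small constant on $(a_0,2)$. In particular, on any parameter window of length $\le e^{-\beta n}$ the target $\zeta_a$ is essentially constant, so the ``moving target'' causes no damage. I would then define inductively the nested sets $\Omega_n$ by removing from $\Omega_{BC}\cap\Omega_{n-1}$ the parameters for which $|f_a^n(c_1)-\zeta_a|<e^{-\beta n}$, and set $\Omega_\infty=\bigcap_n\Omega_n$.

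The key quantitative input is the Benedicks--Carleson parameter transversality
\[
\left|\tfrac{\dif}{\dif a}\,f_a^n(c_1)\right|\asymp |Df_a^n(c_1)|\ge e^{c n},
\]
which is preserved throughout the BC inductive construction provided that the forbidden neighbourhoods do not interfere with the critical returns. Because $\zeta_a$ lies in the uniformly expanding part of the phase space, disjoint from a neighbourhood of $0$, the new forbidden sets $\{|f_a^n(c_1)-\zeta_a|<e^{-\beta n}\}$ can be grafted onto the standard BC bookkeeping as additional mild ``deep returns'' which leave the distortion and partition estimates unchanged. Transversality then implies that the parameter set deleted at stage $n$ has Lebesgue measure bounded by $C e^{-(c+\beta)n}$; summing in $n$ and choosing $a_0$ sufficiently close to $2$ makes the total deletion arbitrarily small compared to $|\Omega_{BC}|$, so $\Omega_\infty$ has positive Lebesgue measure.

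Finally I would verify \eqref{eq:dens} for $a\in\Omega_\infty$ with $\ell=2$: the $n$-th term is bounded by
\[
\frac{1}{|Df_a^n(c_1)|^2\,|f_a^n(c_1)-\zeta_a|^{1/2}}\le\frac{e^{\beta n/2}}{e^{2c n}}= e^{-(2c-\beta/2)n},
\]
which is summable as soon as $\beta<4c$; this is compatible with the requirement $\beta\ll c$ used above. The main obstacle is the middle step: carrying out the deletion for condition (iii) inside the BC inductive scheme while preserving both the transversality estimate and the large-deviation estimates that guarantee $|\Omega_{BC}|>0$. The fact that $\zeta_a$ is hyperbolic, disjoint from the critical orbit of $f_2$, and analytic in $a$ is precisely what allows these deletions to be treated as a minor perturbation of the standard BC parameter exclusion, rather than as a coupled problem requiring a new transversality argument.
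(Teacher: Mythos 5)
Your overall strategy is the same as the paper's (graft an extra exclusion rule, forbidding close approaches of the critical orbit to $\zeta_a$, onto the Benedicks--Carleson induction, then read off \eqref{eq:dens} from the exponential lower bound on $|\xi_n(a)-\zeta_a|$ together with (EG)), but the two steps you dismiss as ``minor perturbations'' are exactly where the real work lies, and as written they are gaps. First, the measure estimate. The deleted set at stage $n$ cannot be bounded globally by ``transversality $\Rightarrow$ measure $\le Ce^{-(c+\beta)n}$'': the exclusions have to be performed element by element on the parameter partition $\mathcal P_{n-1}$, and within an element $\omega$ the relative measure of $\{a\in\omega:|\xi_n(a)-\zeta_a|<e^{-\beta n}\}$ is controlled by bounded distortion only as the ratio of $e^{-\beta n}$ to $|\xi_n(\omega)|$. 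This is harmless only if $\xi_n(\omega)$ has already reached ``large scale'' ($|\xi_n(\omega)|\ge e^{-\alpha n/2}$ in the paper). In particular you say nothing about approaches to $\zeta_a$ occurring during bound periods, when $|\xi_n(\omega)|$ is tiny and no exclusion can be afforded. The paper resolves this by proving (Proposition~\ref{prop:bound}) that no exclusion is ever needed then: during a bound period the critical orbit shadows its own early history within $e^{-\beta k}$, and since that early history already satisfies the periodic-avoidance condition (with $\beta=2\alpha$ and the initial segment controlled by the choice of $a_0$ near $2$, where the critical orbit sits near the fixed point $-1$ away from $\zeta_a$), the avoidance is inherited automatically. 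Your proposal contains no substitute for this argument.

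Second, after a cut forced by the new rule the surviving components have image of size only about $e^{-\alpha n}$ (respectively $e^{-\beta n}$ in your normalisation), i.e.\ far below large scale, so the standard exclusion estimates do not apply to them at the next deletion or refinement. The paper's Proposition~\ref{prop:growth} is precisely the missing lemma: because the surviving pieces lie next to the repelling orbit of $\zeta_a$, they shadow it and are expanded by $|Df_a^q(\zeta_a)|$ per period until they escape a fixed neighbourhood $V$, regaining a definite length (hence large scale) before any further exclusion or return situation can occur; this uses the comparability of spatial and parameter derivatives and bounded distortion, not just ``hyperbolicity of $\zeta_a$'' in the abstract. Your suggestion to treat the forbidden neighbourhoods as additional ``deep returns'' does not supply this: returns in the BC bookkeeping refer to the critical region and come with bound periods to the critical orbit, and approaches to $\zeta_a$ generate neither, so the claim that the distortion, partition and large-deviation estimates are ``unchanged'' needs proof. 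The final summability computation for \eqref{eq:dens} is fine, but the positivity of $\mathrm{Leb}(\Omega_\infty)$ is not established without the two arguments above.
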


\begin{corollary}
Let $\zeta_2$ be a repelling periodic point of $f_2$, of prime period $q\in\N$ and distinct from the critical orbit. Consider its hyperbolic continuation $\zeta_a$ for $a$ sufficiently close to the parameter value 2. Then there exists a positive Lebesgue measure set of parameters $\Omega_\infty=\Omega_\infty(\zeta_2)$ such that the REPP converges to a compound Poisson process, as in Theorem~\ref{thm:convergence-point-process}, with $\theta=1-\frac1{|Df^q(\zeta_a)|}$, for all $a\in\Omega_\infty$. 
\end{corollary}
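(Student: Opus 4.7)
The proof will be a direct consequence of combining Theorem~\ref{thm:BC} with Theorem~\ref{thm:BrV new}. The plan is to verify that, for every $a\in \Omega_\infty$, the map $f_a$ meets the hypotheses of Theorem~\ref{thm:BrV new} at the periodic point $\zeta_a$, and that the intensity prescribed by that theorem is precisely $\theta=1-|Df_a^q(\zeta_a)|^{-1}$.

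First I would check the structural assumptions. Each quadratic map $f_a$ is real analytic, has negative Schwarzian derivative on $[-1,1]\setminus\{0\}$, and its only critical point is the non-flat critical point $c=0$ of order $\ell=2$. Hence $f_a$ is $S$-unimodal with a non-flat critical point as required by Theorem~\ref{thm:BrV new}. The set $\Omega_\infty$ produced by Theorem~\ref{thm:BC} arises from a Benedicks--Carleson exclusion, so by construction its parameters satisfy the Collet--Eckmann estimate $|Df_a^n(f_a(0))|\ge c\lambda^n$ for some $c>0$ and $\lambda>1$; with $\ell=2$ this immediately yields the summability condition \eqref{eq:summable} because $\sum_n\lambda^{-2n}<\infty$. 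Existence of the acip $\mu_a$ then follows from \cite{NS91}, as already noted in the statement of Theorem~\ref{thm:BC}.

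Next I would verify the assumptions on the periodic point $\zeta_a$. Since $\zeta_2$ is a hyperbolic repelling periodic point of prime period $q$ lying off the critical orbit, the implicit function theorem provides an analytic hyperbolic continuation $a\mapsto \zeta_a$ on a neighbourhood of $a=2$ with $f_a^q(\zeta_a)=\zeta_a$ and $|Df_a^q(\zeta_a)|>1$; the prime period stays equal to $q$ because any proper divisor $q'\mid q$ satisfying $f_a^{q'}(\zeta_a)=\zeta_a$ on a parameter interval accumulating at $2$ would, by analyticity, force the same identity for $\zeta_2$, contradicting the assumption. To see that $\zeta_a\in \supp(\mu_a)$, recall that on the Benedicks--Carleson set the metric attractor coincides with the interval $[f_a^2(0),f_a(0)]$ on which $f_a$ is topologically mixing; since $\zeta_2\in (-1,1)\setminus \orb(0)$, continuity in $a$ places $\zeta_a$ in the interior of this attractor after possibly shrinking the parameter window (and intersecting with $\Omega_\infty$). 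Finally, Theorem~\ref{thm:BC} supplies condition \eqref{eq:dens} at $\zeta_a$ for every $a\in\Omega_\infty$.

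With all the hypotheses of Theorem~\ref{thm:BrV new} in force, its conclusion applies with $p=q$, and yields convergence of the REPP $N_n$ to a compound Poisson process of intensity $\theta=1-|Df_a^p(\zeta_a)|^{-1}=1-|Df_a^q(\zeta_a)|^{-1}$ and geometric multiplicity $\pi_\kappa=\theta(1-\theta)^\kappa$, exactly as asserted. In this argument all the work has been done upstream: the nontrivial step is Theorem~\ref{thm:BC} itself, where the parameter exclusion must be strong enough to secure \eqref{eq:dens} at the single distinguished point $\zeta_a$ rather than at $\mu_a$-typical points as in \cite{BV03,BT09} (cf.\ Remark~\ref{rmk:BruTod}); once that is available, deducing the compound Poisson limit from Theorem~\ref{thm:BrV new} is routine verification.
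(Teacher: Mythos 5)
Your argument is correct and follows essentially the same route as the paper: the corollary is obtained by feeding the set $\Omega_\infty$ and condition \eqref{eq:dens} from Theorem~\ref{thm:BC} into Theorem~\ref{thm:BrV new} (whose proof already invokes Theorem~\ref{thm:ind same}), with $\theta=1-|Df_a^q(\zeta_a)|^{-1}$. The extra verifications you spell out (non-flatness, negative Schwarzian, summability via the Collet--Eckmann estimate, persistence of the prime period, and $\zeta_a\in\supp\mu_a$) are exactly the routine checks the paper leaves implicit.
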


\begin{proof}
This is immediate from Theorems~\ref{thm:ind same} and \ref{thm:BrV new} since \eqref{eq:dens} holds at all such $\zeta_a$.
\end{proof}

The Benedicks-Carleson Theorem (see \cite{BC85} or Section 2 of
\cite{BC91}) states that there exists a positive Lebesgue measure
set of parameters, $\BC$, verifying 
\begin{align}
&\mbox{there is $c>0$ ($c\approx \log2$)
such that $|Df_a^n(f_a(0))|\geq \e^{cn}$ for all $n\in\N_0$};
\tag{EG}\label{eq:EG}\\
&\text{there is a small $\alpha>0$ such that $|f_a^n(0)|\geq\e^
{-\alpha n}$ for all $n\in\N$}\tag{BA}\label{eq:BA}.
\end{align}
The condition
\eqref{eq:EG} is usually known as the Collet-Eckmann condition which
was introduced in \cite{CE83} and  admits, among other things, the proof of the existence of an acip, which was the major goal of the celebrated paper of Jakobson \cite{J81}.  We will adapt the Benedicks-Carleson argument, using the presentation of their construction in \cite{M93}, which particularly suits our purposes here. 

We define the \emph{critical region} as the
interval $(-\delta,\delta)$, where $\delta=\e^{-\Delta}>0$ is chosen
small, but much larger than $2-a$. This region is partitioned into
the intervals
$
(-\delta,\delta)=\bigcup_{m\geq\Delta} I_m,
$
where $I_m=(\e^{-(m+1)},\e^{-m}]$ for $m>0$ and
$I_m=[-e^m, -e^{m-1})$ for
$m<0$; then each $I_m$ is further subdivided into $m^2$ intervals
$\{I_{m,j}\}$ of equal length inducing the partition $\P$ of
$[-1,1]$ into
$ [-1,-\delta)\cup
\bigcup_{m,j}I_{m,j}\cup(\delta,1].
$ For definiteness, the smaller the $j=1,\ldots, m^2$ the closer $I_{m,j}$ is to the critical point.
Given $J\in\P$, let $nJ$ denote the interval $n$ times the length of $J$ centred at $J$ and define $I_{m,j}^+=3I_{m,j}$ and $\mathscr U_m:=(-\e^{-m},\e^{-m})$, for every
$m\in\N$. 

\subsection{Expansion outside the critical region}
\label{subsec:free-period-estimates} There is $c_0>0$ and $M_0\in\N$
such that for all $a$ sufficiently close to $2$ we have
\begin{enumerate}

\item \label{item:free-period-M0-1d} If
$x,\ldots,f_a^{k-1}(x)\notin(-\delta,\delta)$ and $k\geq M_0$, then
$|Df_a^k(x)|\geq\e^{c_0k}$;

\item \label{item:free-period-return-1d} If
$x,\ldots,f_a^{k-1}(x)\notin(-\delta,\delta)$ and
$f_a^k(x)\in(-\delta,\delta)$, then $|Df_a^k(x)|\geq\e^{c_0k}$;

\item \label{item:free-period-1d} If
$x,\ldots,f_a^{k-1}(x)\notin(-\delta,\delta)$, then
$|Df_a^k(x)|\geq\delta\e^{c_0k}$.

\end{enumerate}
While the orbit goes through a \emph{free period} its iterates are always away from the critical region which means that the above estimates apply and it experiences an exponential growth of the derivative. However, it is inevitable that the orbit of
almost every $x\in[-1,1]$ makes a \emph{return} to the critical
region. We say that $n\in\N$ is a \emph{return time} of the orbit of $x$ if $f_a^n(x)\in(-\delta,\delta)$. Every free period of $x$ ends
with a \emph{free return} to the critical region. We say that the
return has  \emph{depth} $m\in\N$ if  $f_a^n(x)\in I_{\pm m}$. Once in the critical region, the orbit of $x$ initiates a binding with the critical point.

\subsection{Bound period definition and properties}
\label{subsec:bound-period-estimates} Let $\beta=2\alpha$. For every
$x\in(-\delta,\delta)$ define $p(x)$ to be the largest integer $p$
such that
$|f_a^k(x)-f_a^k(0)|<\e^{-\beta k}$,
$\forall k<p$. For every $|m|\geq \Delta$ we define $p_m=\min_{x\in \mathscr U_m} p(x)$. The orbit of $x\in I_m^+$ is said to be \emph{bound} to the critical point during
the period $0\leq k<p_m$. The bound period $p_m$ of the points $x\in I_m$, for each $|m|\leq \Delta$ satisfies the following properties:

\begin{enumerate}

\item \label{item:bound-period-bounds-1d}$\frac{1}{2}|m|\leq p_m\leq3|m|$;

\item \label{item:bound-period-distortion} there exists $B>0$ such that $\frac 1 B\leq\frac{Df_a^k(f_a(x))}{Df_a^k(f_a(0))}\leq B$ for all $k=1,\ldots, p_m-1$

\item \label{item:bound-period-derivative-1d} $|Df_a^{p_m}(x)|\geq
\e^{(1-4\beta)|m|}$.

\end{enumerate}

The bound period plays a prominent role in the proof of the Benedicks-Carleson
Theorem. Roughly speaking, 
 while the orbit of the critical point is outside the critical region we have expansion (see Subsection
\ref{subsec:free-period-estimates}); when it returns we have a
serious setback in the expansion but then, by continuity, the orbit
repeats its early history regaining expansion on account of
\eqref{eq:EG}. To arrange for the exponential growth of the
derivative along the critical orbit \eqref{eq:EG} one has to
guarantee that the losses at the returns are not too drastic; hence,
by parameter elimination, the basic assumption condition
\eqref{eq:BA} is imposed. The argument is mounted in a very
intricate induction scheme that guarantees both the conditions for
the parameters that survive the exclusions.

\subsection{Spatial and parameter resemblances}
\label{subsec:space-parameter-derivatives}
One of the keys of the parameter exclusion argument is that one can import properties observed in the ambient space $[-1,1]$ to the set of parameters. The main tool to achieve that is the fact that, as long as we have exponential growth of the spatial derivative along the critical orbit, spatial derivatives are close to parameter derivatives. To be more specific let $\xi_n(a):=f_a^n(0)$ and $\xi'_n(a):=\frac{d(\xi_n(a))}{da}$. Then given $2/3<c<\log2$, there exists $N_0$ such that, for every $n\geq N_0$, if $Df_a^j(1)\geq 3^j$ for all $j=1,\ldots N_0$ and $Df_a^j(1)\geq \e^{cj}$ for all $j=1,\ldots,n-1$ then
\[
1/A\leq\frac{\xi'_n(a)}{Df_a^{n-1}(1)}\leq A,
\]
where $A=8$. 

Another important issue regards the bound periods whose definition in Subsection~\ref{subsec:bound-period-estimates} clearly depends on the parameter $a$. So here we will express that by writing $p_{m,a}$ to record this fact. Now, for a parameter interval $\omega$ such that $\xi_n(\omega)\subset I_m$ for some $|m|\geq \Delta$ we define
\[
p(\omega,m)=\min_{a\in\omega} p_{m,a}.
\]
It is possible to show that if $\xi_n(\omega)\subset I_m$ for some $|m|\geq \Delta$, then the properties \eqref{item:bound-period-bounds-1d} to \eqref{item:bound-period-derivative-1d} of Subsection~\ref{subsec:bound-period-estimates} hold for $p(\omega,m)$ in the place of $p_m$ and all $a\in \omega$.

\subsection{Construction of the parameter set}
\label{subsec:parameter-set}

Let $\zeta_2$ be a repelling periodic point of $f_2$, of prime period $q\in\N$ and distinct from the critical orbit. Consider its hyperbolic continuation $\zeta_a$ for $a$ sufficiently close to the parameter value 2. Our goal is to show that there exists a positive Lebesgue measure set of parameters $\Omega_\infty$ satisfying  \eqref{eq:EG}, \eqref{eq:BA} and such that condition \eqref{eq:dens} holds for $\zeta_a$, for all $a\in\Omega_\infty$. We will achieve this by imposing some sort of basic assumption \eqref{eq:BA} with respect to the periodic point $\zeta_a$. We call it \emph{periodic assumption} and basically we will require that the critical orbit does not go near the periodic orbit $\zeta_a$ too quickly. Namely, let $\gamma=\min_{j=0,\ldots,q-1}|-1-f_2^j(\zeta_2)|$, $N_1\in\N$ be such that for all $n\geq N_1$ we have $\e^{-\alpha n}<\gamma/4$ and consider
\begin{equation}
\begin{split}\min_{j=0,\ldots,q-1}\left|\xi_n(a)-f_a^j(\zeta_a)\right|&\geq \gamma/4, \mbox{ for all $n=1,\ldots,N_1$ and }\nonumber \\ \min_{j=0,\ldots,q-1}\left|\xi_n(a)-f_a^j(\zeta_a)\right|&\geq \e^{-\alpha n}, \mbox{ for all $n> N_1$.}\end{split}\tag{P\!A}\label{eq:PA}
\end{equation}

For what follows we need to introduce finite time versions of the conditions \eqref{eq:EG}, \eqref{eq:BA} and \eqref{eq:PA}, which we will denote by ($\ref{eq:EG}_n$), ($\ref{eq:BA}_n$) and ($\ref{eq:PA}_n$), respectively: these are defined in exactly the same way as the original conditions, except that they hold only up to time $n\in\N$, instead of for all the integers.
 
Next, we give the procedure of parameter exclusion of Benedicks-Carleson as in \cite{M93} with some changes in order guarantee that at the end \eqref{eq:PA} holds.

Related to the partition $\mathcal P$ of the phase space $I=[-1,1]$, we will define inductively a sequence of partitions $\mathcal{P}_0,\mathcal{P}_1,\ldots$ in the space of parameters in order to obtain bounded distortion of $\xi_n$ in each element $\omega\in\mathcal P_n$. Notice that the sets $$\Omega_n=\bigcup_{\omega\in\mathcal P_n}\omega$$ of parameters, which satisfy all the discussed conditions up to time $n$, form a decreasing sequence of parameter sets whose limit $\Omega_\infty$ will have positive Lebesgue measure.
For each $\omega\in\mathcal P_n$, we will also define inductively the
sets $R_n(\omega)=\left\{z_1,\ldots,z_{\gamma(n)}\right\}$, which correspond to the return times of $\omega\in \mathcal{P}_n$ up to $n$
and a set
$Q_n(\omega)=\left\{(m_1,k_1),\ldots,(m_{\gamma(n)},k_{\gamma(n)})\right\}$,
which records the indices of the intervals such that
$f_a^{z_i}(\omega)\subset I_{m_i,k_i}^{+}$,
$i=1,\ldots,z_{\gamma(n)}$.

To begin the construction, we proceed as in the original argument by choosing the constants appearing in \eqref{eq:EG} and \eqref{eq:BA}, namely we take some $c$ very close to $\log2$,  a small $\alpha<0.001$ and $\beta=2\alpha$. 

Let $N_0$ be as is subsection~\ref{subsec:space-parameter-derivatives}. Let $N_1$ be as in \eqref{eq:PA}.

For each $i=0, 1,\ldots,q-1$, let $V_i$ be a sufficiently small interval centred at $f_2^i(\zeta_2)$ where $f_2^q$ behaves like the linear map $x\mapsto \sigma_2 x$, where $\sigma_2:=(f_2^q)'(\zeta_2)$. Moreover assume that $\bigcup_{j=1}^q f_2^j(V_i)\cap \mathscr U_{\Delta}=\emptyset$ and $\bigcup_{i=1}^q f_2^i(V)\cap [-1,-1+\gamma/2]=\emptyset$. Suppose that all $V_i$'s are disjoint and have the same length that we denote by $|V|$. Let $N_2\in \N$ be such that for all $n\geq N_2$ we have 
\begin{equation}
\label{eq:N2-def}
9AC\delta^{-1}\e^{-\alpha n/2}<|V|.
\end{equation}

Observe that since for each $\omega\in \mathcal P_n$ condition $(EG_n)$ holds, then by the relation between spatial and parameter derivatives given in  subsection~\ref{subsec:space-parameter-derivatives}, we have that $\xi_n$ expands exponentially fast which gives 
an upper bound for the size of $\omega\in \mathcal P_n$, namely, $|\omega|\leq \mbox{const}. \e^{-cn}$. Besides, since the size of $\rho(\omega)=\{\zeta_a:\; a\in\omega\}$ is proportional to the size of $\omega$, an upper bound like the one just above applies to $|\rho(\omega)|$ with a different constant. Since $\alpha$ is much smaller than $c$, we let $N_3\in\N$ be such that for all $n\geq N_3$ we have that for all $\omega\in\mathcal P_n$, the following estimate holds: 
\begin{equation}
\label{eq:N3-def}
|\rho(\omega)|<\e^{-\alpha n}.
\end{equation}

Using the fact that for $a=2$, we have that the critical point hits the repelling fixed point $-1$, which means that $f^{j}(0)=-1$ and $|Df_2^j(1)|=4^j$, for all $j\geq 2$, we can choose $a_0$ sufficiently close to $2$ and $N>\max\{N_0,N_1, N_2\}$ so that all estimates in the original argument hold (in particular, the growth of the derivative along the critical orbit  so that spatial and parameter derivatives are comparable, up to time $N$) and moreover
\begin{equation}
\label{eq:parameter-choice}
\xi_j([a_0,2])<-1+\gamma/4, \quad \mbox{for all $2\leq j\leq N$.}
\end{equation}
Then we choose $\Delta$ large enough so that all estimates appearing throughout the original procedure hold and also such that
\begin{equation}
\label{eq:Delta-choice}
\left|f_a^j(\e^{-\Delta})-\xi_j(a)\right|<\gamma/4, \quad \mbox{for all $1\leq j\leq N_1$ and all $a\in[a_0,2]$.}
\end{equation}

Let $\Omega_0=[a_0,2]$ be the base of our construction. By our choice of $a_0$ we may assume that $\Omega_j=\Omega_0$ and $\mathcal P_j=\{\Omega_0\}$, for all $j=0,\ldots, N$. Moreover, set $R_j(\Omega_0)=Q_j(\Omega_0)=\emptyset$, for all $j=0,\ldots,N$.

Assume that $\mathcal{P}_{n-1}$ is defined as well as $R_{n-1},\,Q_{n-1}$,
on each element of $\mathcal{P}_{n-1}$. We fix an interval
$\omega\in\mathcal{P}_{n-1}$. We have three possible situations:

\begin{enumerate}

\item If $R_{n-1}(\omega)\neq\emptyset$ and
 $n<z_{\gamma(n-1)}+p(m_{\gamma(n-1)})$ then we say that $n$ is a
\emph{bound time} for $\omega$, put $\omega\in\mathcal{P}_n$ and set
$R_n(\omega)=R_{n-1}(\omega)$, $Q_{n}(\omega)=Q_{n-1}(\omega)$.

\item If $R_{n-1}(\omega)=\emptyset$ or $n\geq z_{\gamma(n-1)}+
p(m_{\gamma(n-1)})$, and $\xi_n(\omega)\cap \mathscr U_\Delta\subset
I_{\Delta,1}\cup I_{-\Delta,1}$, then we say that $n$ is a
\emph{free time} for $\omega$. Consider the intervals $J_{n,\omega}=\{\zeta_a:\, a\in\omega\}$ and its $\e^{-\alpha n}$-neighbourhood $J_{n,\omega}^+=\cup_{a\in\omega}(\zeta_a-\e^{-\alpha n}, \zeta_a+\e^{-\alpha n})$. Now, we have two possibilities. Either  $\xi_n(\omega)\cap J_{n,\omega}^+=\emptyset$, in which case we put $\omega\in\mathcal{P}_n$ and set
$R_n(\omega)=R_{n-1}(\omega)$, $Q_{n}(\omega)=Q_{n-1}(\omega)$; or $\xi_n(\omega)\cap J_{n,\omega}^+\neq\emptyset$, in which case we let $\omega_1$ and $\omega_2$ be the possible nonempty connected components of $\omega\setminus \xi_n^{-1}(J_{n,\omega}^+)$. For each $i=1,2$, if $\omega_i$  is such that $|\xi_n(\omega_i)|>2\e^{-\alpha n}$, then we put $\omega_i\in\mathcal{P}_n$ and set
$R_n(\omega_i)=R_{n-1}(\omega_i)$, $Q_{n}(\omega_i)=Q_{n-1}(\omega_i)$; otherwise we just exclude $\omega_i$ as well.

\item If the above two conditions do not hold we say that
$\omega$ has a \emph{free return situation}\index{free return
situation} at time $n$. We have to consider two cases:

\begin{enumerate}

\item $\xi_n(\omega)$ does not cover completely an interval $I_{m,k}$, with $|m|\geq \Delta$ and $k=1,\ldots,m^2$.
  Because $\xi_n$ is continuous and $\omega$ is an interval, $\xi_n(\omega)$ is also an interval and thus is contained
in some $I_{m,k}^+$, for a certain $|m|\geq \Delta$ and
$k=1,\ldots,m^2$, which is called the \emph{host interval} of the
return. We say that $n$ is an \emph{inessential return
time}\index{inessential return time} for $\omega$, put
$\omega\in\mathcal{P}_n$ and set
$R_n(\omega)=R_{n-1}(\omega)\cup\{n\}$,
$Q_{n}(\omega)=Q_{n-1}(\omega)\cup\{(m,k)\}$.

\item $\xi_n(\omega)$ contains at least an interval $I_{m,k}$, with $|m|\geq \Delta$ and $k=1,\ldots,m^2$, in which
case we say that $\omega$ has an \emph{essential return
situation}\index{essential return situation} at time $n$. Then we
consider the sets
\begin{align*}
  &\omega_{m,k}'=\xi_n^{-1}(I_{m,k})\cap\omega \quad\mbox{ for $|m|\geq\Delta$}\\
  &\omega_{+}'=\xi_n^{-1}([\delta,1])\cap\omega \\
  &\omega_{-}'=\xi_n^{-1}([-1,-\delta])\cap\omega
\end{align*}
and if we denote by $\mathcal{A}$ the set of indices $(m,k)$ such
that $\omega_{m,k}'\neq\emptyset$ we have
\begin{equation}
  \label{eq:omega-decomp}
  \omega\setminus\left\{\xi_n^{-1}(0)\right\}=
  \bigcup_{(m,k)\in\mathcal{A}}\omega_{m,k}'.
\end{equation}

By induction, $\xi_n|_\omega$ is a diffeomorphism and
then each $\omega_{m,k}'$ is an interval. Moreover
$\xi_n(\omega_{m,k}')$ covers the whole of $I_{m,k}$ except possibly
for the two end intervals. When $\xi_n(\omega_{m,k}')$ does not
cover $I_{m,k}$ entirely, we join it with its adjacent interval in
\eqref{eq:omega-decomp}. We also proceed likewise when
$\xi_n(\omega_+')$ does not cover $I_{\Delta-1,(\Delta-1)^2}$, or
 $\xi_n(\omega_-')$ does not contain the whole interval
 $I_{1-\Delta,(\Delta-1)^2}$. In
 this way we get a new decomposition of $\omega\setminus\left\{\xi_n^{-1}(0)\right\}$
into intervals $\omega_{m,k}$ such that
\[
I_{m,k}\subset \xi_n(\omega_{m,k})\subset I_{m,k}^+,
\]
when $|m|\geq\Delta$.

We define $\mathcal{P}_n$, by putting $\omega_{m,k}\in
\mathcal{P}_n$ for all indices $(m,k)$ such that
$\omega_{m,k}\neq\emptyset$, with $\Delta\leq |m|\leq \alpha n$, which results in
a refinement of $\mathcal{P}_{n-1}$ at $\omega$. Note that the sets $\omega_{m,k}$ such that $|m|>\alpha n$, for which $\xi_n(\omega_{m,k})\subset \mathscr U_{\alpha n}$ are excluded. We set
$R_n(\omega_{m,k})=R_{n-1}(\omega)\cup\{n\}$ and $n$ is called an
\emph{essential return time}\index{essential return time} for the surviving
$\omega_{m,k}$. The interval $I_{m,k}^+$ is called the \emph{host
interval}\index{host interval} of $\omega_{m,k}$ and
$Q_n(\omega_{m,k})= Q_n(\omega)\cup\{(m,k)\}$.

In the case when the set $\omega_{+}$ is not empty we say that $n$
is an \emph{escape time}\index{escape time} or \emph{escape
situation}\index{escape situation} for $\omega_{+}$ and
$R_n(\omega_{+})=R_{n-1}(\omega)$,
$Q_{n}(\omega_{+})=Q_{n-1}(\omega)$. We proceed likewise for
$\omega_{-}$. We also refer to $\omega_{+}$ or $\omega_{-}$ as
\emph{escaping components}. Note that the points in escaping
components are in free period.

\end{enumerate}

\item Before moving on to step $n+1$ there are still possible exclusions before $\mathcal P_n$ is complete. Consider an interval $\omega$ that has already been put into $\mathcal P_n$, so far. We look at $R_n(\omega)=\{z_1, \ldots, z_{\gamma(n)}\}$ and consider the sequence $p_1,\ldots, p_{\gamma(n)}$ of the lengths of the bound periods associated to each respective return. If $$\sum_{i=1}^{\gamma(n)}p_i\leq \alpha n,$$ we keep $\omega$ in $\mathcal P_n$, otherwise we remove it. This rule will enforce all elements of $\mathcal P_n$ to satisfy the so called \emph{Free Assumption}, up to time $n$, which is denoted by $FA_n$. Basically $FA_n$ assures that the time spent by the orbit of the critical points in free periods makes up a very large portion of the whole time, namely, larger than $(1-\alpha)n$.

\end{enumerate}

\begin{remark}
Note that the only difference between the original procedure of Benedicks-Carleson presented in \cite{M93} and the one we present here is in step (2) where we make exclusions when the critical orbit gets too close to the periodic orbit.
\end{remark}

\subsection{Bounded distortion}
\label{subsec:bounded-distortion} The sequence of partitions
described above is designed so that we have bounded distortion in
each element of the partition $\P_{n-1}$. To be more
precise, consider $\omega\in\P_{n-1}$. There exists a constant $C$
independent of $\omega$, $n$ such that for
every $a,b\in\omega$,
\begin{equation}
\label{eq:bounded-distortion}
 \frac{|Df_a^{n-1}(1)|}{|Df_b^{n-1}(y)|}\leq C\qquad \text{and}\qquad \frac{|\xi_n'(a)|}{|\xi_n'(b)|}\leq C
\end{equation}
See \cite[Lemma~5]{BC85} or \cite[Proposition~5.4]{M93} for a proof.

\subsection{Growth of returning and escaping components}
\label{subsec:growth-components} Let $t$ be a return time
for $\omega\in\P_{t}$, with $\xi_t(\omega)\subset 3
I_{m,k}$ for some $\Delta\leq m\leq \alpha t$ and $1\leq k\leq m^2$ and let $p$ denote the corresponding bound period. If $t$ is the last return time before $n$ and $n$ is a free time or a return situation for $\omega$ then
\begin{equation}
\label{eq:free-after-return} \left|\xi_n(\omega)\right|\geq \e^{c_0\varrho-\Delta}\e^{(1-5\beta)|m|}
        \left|\xi_t(\omega)\right|\mbox{ and if $t$ is essential then }\left|\xi_n(\omega)\right|\geq
\e^{c_0\varrho-\Delta}\e^{-5\beta|m|},
\end{equation}
where $\varrho=n-(t+p)$.
If $n$ is the
next free return time  for $\omega$ (either essential or
inessential) then
\begin{equation}
\label{eq:return-after-return} \left|\xi_n(\omega)\right|\geq \e^{c_0\varrho}\e^{(1-5\beta)|m|}
        \left|\xi_t(\omega)\right|\mbox{ and if $t$ is essential then }\left|\xi_n(\omega)\right|\geq
\e^{c_0\varrho}\e^{-5\beta|m|},
\end{equation}
where $\varrho=n-(t+p)$. See \cite[Lemma~5.2]{M93}.

Suppose that $\omega\in\mathcal{P}_t$ is an escape component. Then,
in the next return situation for $\omega$, at time $n$, we have
that
\begin{equation*}
\label{eq:return-after-escape} \left|\xi_n(\omega)\right|\geq
\e^{-\beta\Delta}.
\end{equation*}
See \cite[Lemma~5.1]{M93}.

\subsection{Estimates on the exclusions}
In the original procedure by Benedicks-Carleson the exclusions happen on account of rules (3b) and (4) above. Regarding the exclusions by applying (3b) at step $n$ we have the following estimate:
\begin{equation}
\label{eq:excluded-set-estimate1}
|\Omega_{n-1}\setminus\Omega_n'|\leq \e^{-\epsilon n}|\Omega_{n-1}|,
\end{equation}
 where $\epsilon>0$ and $\Omega_n'$ are the parameters of $\Omega_{n-1}$ that survive the exclusions forced by (3b).
Regarding the exclusions because of rule (4) we have that:
\begin{equation*}
\label{eq:excluded-set-estimate2}
|\Omega_{n}'\setminus \Omega_n|\leq \e^{-\epsilon n}|\Omega_0|,
\end{equation*}
 where $\epsilon>0$.
In the original argument, the most complicated to estimate are the exclusions resulting from applying rule (4) in order to guarantee the free assumption (FA). These are dealt with a large deviation argument for which an estimation on the probability of very deep returns is needed. However, the new exclusions we introduce here in rule (2) are more like the ones operated on account of (3b). In order to get estimate \eqref{eq:excluded-set-estimate1}, one realises first that by choice of $\beta$ if $n$ is a bound period time for $\omega$ then $\xi_n(\omega)$ is clearly outside $\mathscr U_{[\alpha n]}$. Moreover, if $\xi_n(\omega)$  hits $\mathscr U_{[\alpha n]}$ then it has already achieved large scale, meaning that the size of  $\xi_n(\omega)$ is at least $\e^{-\alpha n/2}$. In fact, using \eqref{eq:free-after-return} and \eqref{eq:return-after-return} one can show that (see \cite[Lemma~5.3]{M93}) if $n$ is either a free time or a return situation for $\omega\in\mathcal P_{n-1}$, we have that
\begin{equation}
\label{eq:large-scale} \left|\xi_n(\omega)\right|\geq
\e^{-\alpha n/2}.
\end{equation}
Once large scale is achieved, using bounded distortion one gets that the exclusions produced on $\Omega_{n-1}$, by applying (3b), are at most proportional to $\frac{\e^{-\alpha n}}{\e^{-\alpha n/2}}=\e^{-\alpha n/2}$. Hence, estimate \eqref{eq:excluded-set-estimate1} follows with $\epsilon=\alpha /2$.   

With this new procedure, estimate \eqref{eq:large-scale} still holds during free times, then whenever we make exclusions by applying rule (2), we have already reached large scale and the same argument can be used to obtain an estimate like  \eqref{eq:excluded-set-estimate1} for the new exclusions we incorporated in rule (2). 
Thus, basically, we have to check the following facts in order to conclude that our modifications do not tamper much with the original procedure, and the new exclusions still allow to obtain a positive Lebesgue measure set of parameters $\Omega_\infty$ satisfying simultaneously conditions (EG), (BA), (FA) and (P\!A):
\begin{enumerate}

\item if $n$ is a bound time for $\omega\in\mathcal P_{n-1}$ then $\xi_n(\omega)$ is away $J_{n,\omega}^+$.

\item if an exclusion occurs at time $n$ by applying rule (2) then the remaining connected components of $\Omega_n$ still achieve large scale (meaning that estimate \eqref{eq:large-scale} holds) before new exclusions may occur. 

\end{enumerate}
     
\begin{proposition}
\label{prop:bound}
Suppose that $\omega\in\mathcal P_{n-1}$ and $(P\!A_{n-1})$ holds for $\omega$. Assume also that there is $s\leq n-1$ such that $\xi_s(\omega)\subset \mathscr U_{\Delta}$, $p_s$ is the bound period associated to this return at time $s$ and $n<s+p_s$. Then condition $(PA_n)$ holds for $\omega$.
\end{proposition}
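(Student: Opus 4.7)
The plan is to exploit the shadowing property of bound periods: during the bound period initiated by the return at time $s$ of depth $m$, the iterate $\xi_n(a)=f_a^n(0)$ closely shadows $\xi_k(a)=f_a^k(0)$, where $k:=n-s$, uniformly in $a\in\omega$. Hence the required lower bound on $|\xi_n(a)-f_a^j(\zeta_a)|$ can be bootstrapped from the analogous bound at the earlier time $k$, which is furnished by the inductive hypothesis $(PA_{n-1})$ since $k\leq n-1$.

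First I will set $k:=n-s\in[1,p_s-1]$ and invoke the defining inequality of the bound period to obtain $|\xi_n(a)-\xi_k(a)|<e^{-\beta k}$ for every $a\in\omega$. The triangle inequality then yields
$$|\xi_n(a)-f_a^j(\zeta_a)|\geq|\xi_k(a)-f_a^j(\zeta_a)|-e^{-\beta k}$$
for each $j=0,\ldots,q-1$. I will apply $(PA_k)$ to the first term on the right and split into cases according to the positions of $k$ and $n$ relative to $N_1$. A crucial structural inequality comes from property \eqref{item:bound-period-bounds-1d} of the bound period and the depth exclusion $|m|\leq\alpha s$ enforced in step (3) of the partition construction: together these force $k<p_s\leq 3|m|\leq 3\alpha n$, so that $n-k$ is a definite fraction of $n$.

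When $n>N_1$ and $k>N_1$, using $\beta=2\alpha$ the right-hand side is bounded below by $e^{-\alpha k}(1-e^{-\alpha k})$, which exceeds $e^{-\alpha n}$ since $1-e^{-\alpha k}\geq 1-e^{-\alpha N_1}$ is a fixed positive constant while $e^{-\alpha(n-k)}$ is small by the fraction bound on $n-k$. When $n>N_1$ and $k\leq N_1$, $(PA_k)$ gives $\gamma/4$, which dominates $e^{-\alpha n}$ by the choice of $N_1$ arranged so that $e^{-\alpha N_1}<\gamma/4$, while $e^{-\beta k}$ contributes a negligible correction. The delicate case is $n\leq N_1$: here one must preserve the fixed tolerance $\gamma/4$ after subtracting $e^{-\beta k}$, which requires a sharper shadowing bound $|\xi_n(a)-\xi_k(a)|=O\!\left(|\xi_s(a)|^2\,|Df_a^{k-1}(\xi_1(a))|\right)$ obtained from the mean value theorem and the distortion property \eqref{item:bound-period-distortion}, combined with $|\xi_s(a)|\leq\delta=e^{-\Delta}$ and $\Delta$ chosen large.

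The main obstacle is exactly this last case of small $n$ with a fixed tolerance $\gamma/4$: unlike the regime $n>N_1$, where the tolerance decays with $n$ and the fraction bound on $n-k$ provides comfortable slack, there is no room to absorb even the loose estimate $e^{-\beta k}$ directly, and one must appeal to the sharper bound-period shadowing together with the smallness of $\delta$ built into the setup of Subsection~\ref{subsec:parameter-set}. All the constants $\alpha$, $\beta$, $\Delta$, $N_1$ and $a_0$ were chosen compatibly in that subsection precisely so that this final estimate closes.
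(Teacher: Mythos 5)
Your treatment of the main case is fine and coincides with the paper's argument: for $k:=n-s>N_1$ you use $(PA_k)\ge e^{-\alpha k}$, the binding bound $|\xi_n(a)-\xi_k(a)|<e^{-\beta k}$ with $\beta=2\alpha$, and the fact that $n-k=s$ is large, exactly as in the paper (your detour through $k<p_s\le 3|m|\le 3\alpha n$ is unnecessary --- $n-k=s>N$ already does the job). The genuine gap is the subcase $n>N_1$, $k\le N_1$. There you take $(PA_k)\ge\gamma/4$ and assert that ``$e^{-\beta k}$ contributes a negligible correction''; this is false. Since $\alpha<0.001$ and $\beta=2\alpha$, for $k=1,\ldots,N_1$ the quantity $e^{-\beta k}$ is extremely close to $1$ and dwarfs $\gamma/4$, so the triangle inequality gives a negative lower bound and proves nothing. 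This is precisely the regime the paper handles by a different mechanism, not by subtracting a binding error: since $\xi_s(\omega)\subset\mathscr U_{\Delta}$ and $n-s\le N_1$, \eqref{eq:Delta-choice} gives $|\xi_n(a)-\xi_{n-s}(a)|<\gamma/4$, while \eqref{eq:parameter-choice} gives $\xi_{n-s}(a)<-1+\gamma/4$; hence $\xi_n(a)$ lies within $\gamma/2$ of the fixed point $-1$, whereas the orbit of $\zeta_a$ stays at distance of order $\gamma$ from $-1$ (this is how $\gamma$, the neighbourhoods $V_i$ and $a_0$ were chosen), which yields the required separation at time $n$.

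Your sharper shadowing estimate $|\xi_n(a)-\xi_k(a)|=O\bigl(\delta^2\,|Df_a^{k-1}(\xi_1(a))|\bigr)$ could in principle repair this subcase (choosing $\Delta$ large after $N_1$ is fixed), but you only deploy it in the case $n\le N_1$, which is vacuous: by construction $\mathcal P_j=\{\Omega_0\}$ and $R_j(\Omega_0)=\emptyset$ for $j\le N$, so any return time satisfies $s>N>N_1$ and hence $n>N_1$ always. Moreover, even in that vacuous case your scheme could not close: starting from $(PA_k)\ge\gamma/4$ and subtracting any positive error cannot recover the full tolerance $\gamma/4$ demanded by $(PA_n)$ for $n\le N_1$; what is really needed, and what the paper uses, is the stronger structural fact that during the first $N_1$ iterates after a return into $\mathscr U_{\Delta}$ the orbit is pinned near $-1$, uniformly far from the periodic orbit of $\zeta_a$. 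As written, therefore, the nonvacuous small-$k$ case of your proof fails.
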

\begin{proof}
Suppose first that $n-s\leq N_1$. By \eqref{eq:Delta-choice} we have $|\xi_n(a)-\xi_{n-s}(a)|<\gamma/4$. By \eqref{eq:parameter-choice}, it follows that $\xi_{n-s}(a)<-1+\gamma/4$. Hence, putting it together, for all $a\in\omega$, we have $\xi_{n}(a)<-1+\gamma/2$, which means that $(PA_n)$ holds for $\omega$ in this case.

Now assume that $n-s>N_1$. Since $(P\!A_{n-1})$ holds for $\omega$, which implies that $(P\!A_{n-s})$ also holds, then $$\min_{j=0,\ldots,q-1}|\xi_{n-s}(a)-f_a^j(\zeta_a)|\geq \e^{-\alpha (n-s)}.$$  By the binding condition we also have 
$
|\xi_n(a)-\xi_{n-s}(a)|<\e^{-\beta (n-s)}. $ Thus, if $s>N$ is sufficiently large then
\begin{align*}
\min_{j=0,\ldots,q-1}|\xi_{n}(a)-f_a^j(\zeta_a)|&>\e^{-\alpha (n-s)}-\e^{-\beta (n-s)}=\e^{-\alpha(n-s)}\big(1-\e^{(\alpha-\beta)(n-s)}\big)\\
&>\e^{-\alpha(n-s)}\big(1-\e^{(\alpha-\beta)}\big)=\e^{-\alpha n} \e^{\alpha s}\big(1-\e^{-\alpha}\big)\\
&>\e^{-\alpha n},
\end{align*}
which means that $(P\!A_n)$ holds for $\omega$ also in this case.
\end{proof}

As a consequence of Proposition~\ref{prop:bound} and the rule (2) of the procedure we have that if $(PA_{n-1})$ holds for all $\omega\in \mathcal P_{n-1}$ then $(P\!A_{n})$ holds for all $\omega\in \mathcal P_{n}$. This is because for any $\omega\in \mathcal P_{n-1}$ either $n$ is a bound period, in which case Proposition~\ref{prop:bound} gives the conclusion, or $n$ is in a free period. When $n$ is in free period either $\xi_n(\omega)\cap J_{n,\omega}^+=\emptyset$, in which case the conclusion is clear, or else we have to make exclusions according to rule (2) so that for the reminders of $\omega$ that go into $\mathcal P_n$ it is also clear that  $(P\!A_{n})$ holds. This means that condition \eqref{eq:PA} eventually holds for all parameters in $\Omega_\infty$.

Finally, we have to check that the exclusions on account of the changes we included in rule (2), still allow to achieve large scale before new exclusions occur. Recall that this is crucial for the estimates on the excluded sets to hold and it means that we have to check that condition \eqref{eq:large-scale} holds when new exclusions are about to happen. The problem that could arise would be that when make exclusions using rule (2), some small leftovers could possibly not have had time to reach large scale. We will see that this does not happen. This is essentially because   when a cut to $\omega$ occurs because $\xi_n(\omega)$ is too close to the periodic points $\zeta_a$, the possibly remaining small bits (whose size is at least $2\e^{-\alpha n}$, by construction) are so close to the repelling periodic points that they will shadow them for long enough time to grow up to reach the size $\e^{-\alpha n/2}$.   

\begin{proposition}
\label{prop:growth}
Let $\omega\in\mathcal P_{n-1}$ and assume that $\xi_n(\omega)\cap J_{n,\omega}^+\neq\emptyset$ which means that an exclusion is made according to rule (2). Let $\omega^*\in\mathcal P_n$ be a surviving connected component of $\omega$. Let $m$ be the such that $m-1\geq n$ is the last time $\omega^*\in\mathcal P_{m-1}$, either because $\xi_m(\omega^*)\cap J_{n,\omega}^+\neq\emptyset$, which means that new exclusions must be made, or $m$ is a returning time situation, which means that a refinement, with possibly some exclusions, will occur. Then $$|\xi_m(\omega^*)|\geq \e^{-\alpha n/2}>\e^{-\alpha m/2}.$$ 
\end{proposition}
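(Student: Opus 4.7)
The idea is that rule~(2), which cuts $\omega$ at the $\e^{-\alpha n}$-neighbourhood of $J_{n,\omega}$, forces the surviving piece $\omega^*$ to have image $\xi_n(\omega^*)$ starting at distance exactly $\e^{-\alpha n}$ from some $\zeta_a$. Iterating by $f_a^q$ inside the linearisation neighbourhood $V$ of the periodic orbit then produces expansion of $\xi_k(\omega^*)$ at rate $|\sigma_a|\approx|Df_2^q(\zeta_2)|>1$, and this expansion drives $|\xi_k(\omega^*)|$ up to the target scale $\e^{-\alpha n/2}$ before any return time situation or new rule~(2) exclusion can occur.

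\textbf{Setup.} By construction $|\xi_n(\omega^*)|\ge 2\e^{-\alpha n}$ and one endpoint of $\xi_n(\omega^*)$ lies on $\partial J_{n,\omega}^+$, so for every $a\in\omega^*$,
\[
\e^{-\alpha n}\le|\xi_n(a)-\zeta_a|\le \e^{-\alpha n}+|\xi_n(\omega^*)|.
\]
If $|\xi_n(\omega^*)|\ge\e^{-\alpha n/2}$ the conclusion is immediate, since Subsections~\ref{subsec:free-period-estimates}--\ref{subsec:bound-period-estimates} prevent $|\xi_k(\omega^*)|$ from shrinking between consecutive stopping events, so we may assume $|\xi_n(\omega^*)|<\e^{-\alpha n/2}$, in which case $\xi_n(\omega^*)\subset V$.

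\textbf{Shadowing and growth.} In $V$, $f_a^q$ acts like $z\mapsto\zeta_a+\sigma_a(z-\zeta_a)+O((z-\zeta_a)^2)$ with $\sigma_a=(f_a^q)'(\zeta_a)$, so an elementary induction yields $|\xi_{n+kq}(a)-\zeta_a|\asymp|\sigma_a|^k|\xi_n(a)-\zeta_a|$ and $|Df_a^{kq}(\xi_n(a))|\asymp|\sigma_a|^k$ for every $k\le K:=\bigl\lfloor\log(|V|/\e^{-\alpha n})/\log|\sigma_a|\bigr\rfloor$. Combining this with the bounded distortion estimate~\eqref{eq:bounded-distortion} and the spatial/parameter derivative comparison of Subsection~\ref{subsec:space-parameter-derivatives} applied along the successive $q$-cycles, one obtains
\[
|\xi_{n+kq}(\omega^*)|\asymp|\sigma_a|^k\,|\xi_n(\omega^*)|\ge 2|\sigma_a|^k\e^{-\alpha n}\qquad\text{for } k\le K.
\]
The intermediate images $\xi_{n+kq+i}(\omega^*)$ with $0<i<q$ sit in a tight neighbourhood of $f_a^i(\zeta_a)\in V_i\subset V$, hence outside $\mathscr{U}_\Delta$ and at distance at least $\gamma_0/2$ from $\zeta_a$, where $\gamma_0:=\min_{0\le i<j<q}|f_2^i(\zeta_2)-f_2^j(\zeta_2)|>0$.

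\textbf{No stopping before the target, and conclusion.} For any $n<s\le n+Kq$, the inclusion $\xi_s(\omega^*)\subset V\cap\mathscr{U}_\Delta^c$ rules out a return time situation, and $|\xi_s(a)-\zeta_a|$ is either $\gtrsim|\sigma_a|^k\e^{-\alpha n}\ge\e^{-\alpha n}>\e^{-\alpha s}$ (if $s=n+kq$) or $\ge\gamma_0/2>\e^{-\alpha s}$ (if $s=n+kq+i$, $0<i<q$), both for $n$ large, ruling out a rule~(2) exclusion. Setting $k^*=\lceil\alpha n/(2\log|\sigma_a|)\rceil$ gives $k^*\le K/2$ and $|\sigma_a|^{k^*}\gtrsim\e^{\alpha n/2}$, so $|\xi_{n+k^*q}(\omega^*)|\gtrsim\e^{-\alpha n/2}$. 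Hence $m\ge n+k^*q$ and, by the non-shrinking property, $|\xi_m(\omega^*)|\ge\e^{-\alpha n/2}>\e^{-\alpha m/2}$. The main obstacle is converting the \emph{spatial} expansion rate $|\sigma_a|$ at $\zeta_a$ into an expansion for the \emph{parameter} interval $\xi_k(\omega^*)$, which is where the derivative comparison of Subsection~\ref{subsec:space-parameter-derivatives} and the distortion bound~\eqref{eq:bounded-distortion} are essential; a secondary subtlety is the absence of spurious rule~(2) exclusions at the intermediate times $n+kq+i$, $0<i<q$, since there $\xi_s(a)$ lies near $f_a^i(\zeta_a)$ rather than near $\zeta_a$, and this is precisely what forces the introduction of the pairwise separation $\gamma_0$.
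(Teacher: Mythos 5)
Your argument is essentially the paper's: cut at $\partial J_{n,\omega}^+$ so the surviving piece starts at distance $\asymp\e^{-\alpha n}$ from $\zeta_a$, use the linearisation of $f_a^q$ on $V$ to expand at rate $|\sigma_a|$, transfer this spatial growth to $\xi_{n+kq}(\omega^*)$ via the space/parameter derivative comparison and the distortion bound \eqref{eq:bounded-distortion}, and use the disjointness of the neighbourhoods of the periodic orbit (your $\gamma_0$, the paper's choice of the $V_i$) to rule out intermediate returns and rule-(2) exclusions. The one place you are looser than the paper is the endgame: you stop the growth at scale $\e^{-\alpha n/2}$ (time $n+k^*q$) and invoke a ``non-shrinking'' property to carry the bound to time $m$, but the free-period estimate only gives $|\xi_m(\omega^*)|\ge\delta\,\e^{c_0(m-n-k^*q)}|\xi_{n+k^*q}(\omega^*)|$, so the length may drop by the factor $\delta$ (and your $\gtrsim$ hides the constants $C$ and $A$), which leaves no margin to reach exactly $\e^{-\alpha n/2}$. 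The paper avoids this by letting the interval grow until its far endpoint exits $V$, so $|\xi_{n+jq}(\omega^*)|\gtrsim |V|/(9CA)$, and then the defining inequality of $N_2$ in \eqref{eq:N2-def}, namely $9AC\delta^{-1}\e^{-\alpha n/2}<|V|$, is precisely the margin that absorbs the $\delta$, $C$, $A$ losses at time $m$. Your proof is repaired the same way: grow a few more $q$-cycles, targeting $9AC\delta^{-1}\e^{-\alpha n/2}$ rather than $\e^{-\alpha n/2}$, which is still well inside $V$ by \eqref{eq:N2-def}, and likewise insert this margin in your preliminary reduction for the case $|\xi_n(\omega^*)|\ge\e^{-\alpha n/2}$.
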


\begin{proof}
Let $V$ denote the neighbourhood $V_i$ such that $\xi_n(\omega)\cap V_i\neq \emptyset$. Since $n>N$ is very large and $a_0$ is very close to 2, we may assume that the neighbourhood $V$ appearing in \eqref{eq:N2-def} is the same for every $a\in\omega^*$ and it has the same properties when iteration by $f_2$ is replaced by $f_a$. 

Let $x,y\in\xi_n(\omega^*)$ be respectively the closest point to and the farthest away point from  $\rho(\omega^*)$. Note that by \eqref{eq:N3-def}, we have $\e^{-\alpha n}<\dist(x,\rho(\omega^*))<2\e^{-\alpha n}$ and, by construction, $\dist(y,\rho(\omega^*))>3\e^{-\alpha n}$. This gives that $\dist(x,y)>\dist(y,\rho(\omega^*))/3$. For definiteness let $x<y$ and $a$ be such that $\zeta_a$ is the closest point to $x$ in $\rho(\omega^*)$. Also, let $\sigma:=\sigma_a=|(f_a^q)'(\zeta_a)|$. Now, $f_a^q:V\to f_a^q(V)$ behaves just like the linear map $x\mapsto \sigma x$ with the origin coinciding with $\zeta_a$. This means that for $n$ sufficiently large we have to wait some time before the interval $[x,y]$ leaves $V$ by iteration by $f_a^q$.  Let $j$ be the first time that $f_a^{jq}(y)$ is outside of $V$. Then $\dist(\zeta_a,f_a^{jq}(y))> |V|/3$. The size of $|f_a^{jq}([x,y])|$ is then approximately $\dist(f_a^{jq}(x),f_a^{jq}(y))\asymp\sigma^j\dist(x,y)>\sigma^j\dist(y,\zeta_a)/3>|V|/9$. Now, since spatial and parameter derivatives are very close to each other then $(f_a^{jq})'(x)$ is approximately $\xi_{jq}'(a)$. This together with the bounded distortion of the parameter derivatives means that, the size of $\xi_{n+jq}(\omega^*)$ will be approximately $|V|/(9CA)$, where $C$ comes from bounded distortion and $A$ from the relation between spatial and parameter derivatives.  

Note that for $n<t<n+jq$ we have that $\xi_t(\omega^*)\subset\cup_{i=0}^{q-1}f_a^i(V)$ which by choice of $V$ means that you cannot have exclusions nor refinements of the partition since you are clearly away from $J_{n,\omega^*}^+$ and $\mathscr U_\Delta$. Now that we have seen that $m>n+jq$, recall that between $n$ and $m$ we are still in free period which means that as in \eqref{eq:free-after-return} we have that $|\xi_m(\omega^*)|\geq \delta \e^{c_0(m-(n+jq))}|\xi_{n+jq}(\omega^*)|$. Finally, by \eqref{eq:N2-def}, we have that
$$
|\xi_m(\omega^*)|\geq  \delta |V|/(9CA)>\e^{-\alpha n/2}>\e^{-\alpha m/2},
$$ 
as required.
\end{proof}

\bibliographystyle{novostyle}

\bibliography{ExtremalIndexPoisson}

\end{document}